\newenvironment{ack}{\subsection*{Acknowledgments}}{}
\newtheorem{theorem}{Theorem}[section]
\newtheorem{corollary}[theorem]{Corollary}
\newtheorem{lemma}[theorem]{Lemma}
\newtheorem{proposition}[theorem]{Proposition}
\newtheorem{conjecture}[theorem]{Conjecture}
\theoremstyle{definition}
\newtheorem{definition}[theorem]{Definition}
\newtheorem{example}[theorem]{Example}
\newtheorem{remark}[theorem]{Remark}
\newtheorem{question}[theorem]{Question}
\newcommand{\R}{\mathbb{R}} 
\newcommand{\Z}{\mathbb{Z}}
\newcommand{\Q}{\mathbb{Q}}
\newcommand{\ep}{\varepsilon}
\newcommand{\norm}[1]{\left\lVert#1\right\rVert}
\begin{document}
\title[Nondivergence of reductive group action]{Nondivergence of reductive group action on homogeneous spaces}
\author{Han Zhang}
\address{School of Mathematical Sciences, Soochow University, Suzhou 215006, China }
\email{hzhang.math@suda.edu.cn}

\author{Runlin Zhang}
\address{College of Mathematics and Statistics, Center of Mathematics, Chongqing University, 401331, Chongqing,  China. }
\email{runlinzhang@cqu.edu.cn}
\date{}

\maketitle
\begin{abstract}
Let $X=G/\Gamma$ be the quotient of a semisimple Lie group $G$ by its non-cocompact arithmetic lattice. Let $H$ be a reductive algebraic subgroup of $G$ acting on $X$. We give several equivalent algebraic conditions on $H$ for the existence of a fixed compact set in $X$ intersecting \textit{every} $H$-orbit. This generalizes previous results concerning certain special reductive group action on $X$ in this setting.

When $G$ is of real rank one, $\Gamma$ is a non-cocompact lattice of $G$ and $H<G$ is an algebraic group, we also obtain an algebraic condition on $H$ which is equivalent to the return of \textit{every} $H$-orbit to a single compact set in $X$. This complements our results in the case of arithmetic lattice. 

\end{abstract}

\tableofcontents

\section{Introduction}\label{Section: Introduction}
\subsection{Background}
Let $H<G$ be Lie groups and $\Gamma$ be a lattice of $G$. Consider the quotient space $X=G/\Gamma$, then the group multiplication induces a left group action of $H$ on $X$. If $X$ is noncompact, in many cases it is particularly important to understand whether or not an $H$-orbit intersects a given compact subset of $X$. For example, let $H=\{u_t:t\in \mathbb{R}\}$ be a one-parameter unipotent subgroup of $G$. In their fundamental work \cite{Dani_Margulis_1991_Asymptotic_behaviour_of_trajectories_of_unipotent_flows_on_homogeneous_spaces_MR1101994}, Dani and Margulis proved that given any $x\in X$, there exists a compact set $C\subset X$ depending on $x$ such that $\{t\in \mathbb{R}:u_t x \in C\}$ is unbounded. In particular, if this unipotent subgroup satisfies certain algebraic condition, then this compact set $C$ can be chosen uniformly for all $x\in X$. This result was crucially used in Ratner's proof of uniform distribution of trajectories of unipotent subgroups on homogeneous spaces \cite{Ratner_1991_raghunathanduke}. On the other hand, let $G$ be an algebraic group defined over $\mathbb{Q}$ and $\Gamma$ be an arithmetic subgroup of $G$. Let $H=T$ be a maximal $\mathbb{R}$-split torus of $G$ and $X=G/\Gamma$. Tomanov and Weiss \cite{Tomanov_Weiss_2003_Closed_orbits_for_actions_of_maximal_tori_on_homogeneous_spaces_MR1997950} proved that there exists a fixed compact set $C$ of $X$ intersecting every $T$-orbit. This result enables them to classify all $T$-closed orbits on $X$.
Motivatied by these results, in this article, we study the following nondivergence property of group action on homogeneous spaces:

\begin{definition}\label{Definition: uniform nondivergence}
The action of $H$ on $X$ is said to be \textit{uniformly non-divergent} if there exists a compact subset $C\subset X$ such that for \textit{every} $x\in X$, $H x\cap C\neq \emptyset$.
\end{definition}
\begin{remark}
  It is clear that the action of $H$ on $X$ is \textit{not} uniformly non-divergent if and only if there exists a sequence $\{x_n\}_{n\in \mathbb{N}}\subset X$ such that $Hx_n$ eventually leaves every compact subset of $X$.
\end{remark}

Now we take $X$ to be a homogeneous space of the form $G/\Gamma$, 
where $G$ is the connected component of the real points of a connected semisimple algebraic group $\boldsymbol{G}$ defined over $\mathbb{Q}$, and
 $\Gamma$ is an arithmetic lattice in $G$. Moreover, we take $H$ to be a closed subgroup of $G$.
The question that we wish to answer is the following :
\begin{question}
When is the action of $H$ on $X$ uniformly non-divergent?
\end{question}

Note that this property only depends on the $G$-conjugacy class of $H$.
There are several different approaches towards this question depending on the group $H$. 
\begin{itemize}
\item[1.] When $H$ is generated by unipotent flows, one may make use of the polynomial nature of $H$-orbits. See \cite{Dani_Margulis_1991_Asymptotic_behaviour_of_trajectories_of_unipotent_flows_on_homogeneous_spaces_MR1101994}.
\item[2.] When $H$ is a finitely generated Zariski dense subgroup of a semisimple subgroup without compact factors, one can study the random walk generated by a set of generators of this subgroup. See \cite{BenoistQuint2012, EskinMargulis2004}.
\item[3.] When $H$ is the real points of an $\R$-diagonalizable algebraic torus, one of the most successful approaches relies on some tools from algebraic topology initiated by McMullen \cite{McMullen_2005_Minkowski's_conjecture_well_rounded_lattices_and_topological_dimension_MR2138142}, and refined by Solan, Tamam \cite{Solan_2019_Stable_and_well_rounded_lattices_in_diagonal_orbits_MR4040835, Solan_Tamam_2022_On_topologically_big_divergent_trajectories}
 (cf. \cite{Levin_Shapira_Weiss_2016_Closed_orbits_for_the_diagonal_group_and_well_rounded_lattices_MR3605031, Shapira_Weiss_2016_Stable_lattices_and_the_diagonal_group_MR3519540} for similar ideas).
 See 
 \cite{Tomanov_2007_Values_Of_decomposable_forms_at_S_integral_points_and_orbits_of_tori_on_homogeneous_spaces_MR2322686,Tomanov_Weiss_2003_Closed_orbits_for_actions_of_maximal_tori_on_homogeneous_spaces_MR1997950,Weiss_2004_Divergent_trajectories_on_noncompact_parameter_spaces_MR2053601,Weiss_2006_Divergent_trajectories_and_Q_rank_MR2214461}
 for a different approach in this case.
\end{itemize}
These approaches indicate that certain \textit{algebraic obstruction} is the only obstruction to uniform nondivergence, which we explain now. In the following, we assume that $\boldsymbol{H}$ is an $\mathbb{R}$-algebraic subgroup of $\boldsymbol{G}$, and $H=\boldsymbol{H}(\mathbb{R})$.

When $H=\{\mathrm{id} \}$, the uniform nondivergence holds if and only if
$G/\Gamma$ is compact, which holds if and only if
$\boldsymbol{G}$ has no proper $\mathbb{Q}$-parabolic subgroups.
This is a hint that the general case may have to do with parabolic subgroups, and the formulation we find is linked with $\mathbb{Q}$-\textit{quasiparabolic subgroups} (see \cite[Definition 1.1]{BacThang2010}).

Assume that there exists an absolutely irreducible $\mathbb{Q}$-representation $\rho: \boldsymbol{G}\to \mathrm{GL}_n(\boldsymbol{V})$ and a nonzero rational vector $v\in \boldsymbol{V}(\mathbb{Q})$ such that 
\begin{itemize}
\item $\boldsymbol{H}$ fixes $v$;
\item $v$ is a highest weight vector.
\end{itemize}
In this case, we say that $\boldsymbol{H}$ is contained in a proper $\mathbb{Q}$-\textit{quasiparabolic subgroup} of $\boldsymbol{G}$. Now we claim that the $H$-action on $G/\Gamma$ is not uniformly non-divergent. This can be seen as follows.
Let $\delta : \boldsymbol{G}_m \to \boldsymbol{G}$ be a $\mathbb{Q}$-cocharacter that stabilizes the line spanned by $v$, and satisfies $\lim_{t\to 0} \rho(\delta(t)) \cdot v=0$.
Let $G_v$ be the stabilizer of $v$ in $G$.
By Mahler's criterion, 
\[
     H \delta_t \Gamma /\Gamma \subset  G_v \delta(t) \Gamma /\Gamma
     = \delta(t)  G_v \Gamma /\Gamma  
    \quad 
   \text{diverges as } t \text{ tends to }0,
\]
which implies the claim.

The approaches mentioned above are all able to show
that up to $G$-conjugacy, the converse holds in each case\footnote{ In the case of torus, \cite{Solan_2019_Stable_and_well_rounded_lattices_in_diagonal_orbits_MR4040835,Solan_Tamam_2022_On_topologically_big_divergent_trajectories} did not prove this exactly, but we will later explain how the result follows.} (namely, assume that $H$ is unipotent, finitely generated and Zariski dense in a connected semisimple group without compact factors, or an  $\mathbb{R}$-split algebraic torus).
In light of this, the following conjecture seems quite plausible:
\begin{conjecture}\label{Conjecture}
Let $\boldsymbol{G}$ be a semisimple $\mathbb{Q}$-algebraic group and let $G$ denote its real points.
Let $\Gamma$ be an arithmetic lattice in $G$ and $X:= G/\Gamma$.
Let $H$ be the real points of an $\R$-isotropic connected $\mathbb{R}$-algebraic subgroup $\boldsymbol{H}$ of $\boldsymbol{G}$.
The following two are equivalent:
\begin{itemize}
\item[1.] The action of $H$ is uniformly non-divergent on $X$;
\item[2.] For every $g\in G$, $g\boldsymbol{H}g^{-1}$ is not contained in a proper $\mathbb{Q}$-quasiparabolic subgroup of $\boldsymbol{G}$.
\end{itemize}
\end{conjecture}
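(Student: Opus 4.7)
The direction $(1) \Rightarrow (2)$ is already established in the discussion preceding the conjecture: a $\mathbb{Q}$-quasiparabolic obstruction produces divergent trajectories via a $\mathbb{Q}$-cocharacter and Mahler's criterion. My plan for $(2) \Rightarrow (1)$ is to deduce it from the paper's main theorem, which handles subgroups of the form $H = AM$ with $A$ an $\mathbb{R}$-diagonalizable group centralizing a semisimple $M$, by replacing $\boldsymbol{H}$ with a carefully chosen subgroup $\boldsymbol{A}\boldsymbol{M} \subseteq \boldsymbol{H}$ that falls within its scope.

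Concretely, I would first pass to a Levi decomposition $\boldsymbol{H} = \boldsymbol{L} \ltimes R_u(\boldsymbol{H})$ with $\boldsymbol{L}$ reductive, and then split $\boldsymbol{L} = \boldsymbol{Z}(\boldsymbol{L})^\circ \cdot [\boldsymbol{L}, \boldsymbol{L}]$. I would take $\boldsymbol{A}$ to be the maximal $\mathbb{R}$-split subtorus of $\boldsymbol{Z}(\boldsymbol{L})^\circ$ enlarged by a maximal $\mathbb{R}$-split torus inside each $\mathbb{R}$-isotropic simple factor of $[\boldsymbol{L},\boldsymbol{L}]$, and let $\boldsymbol{M}$ be the centralizer of $\boldsymbol{A}$ inside $[\boldsymbol{L},\boldsymbol{L}]$, which is semisimple and commutes with $\boldsymbol{A}$. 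The $\mathbb{R}$-isotropy hypothesis on $\boldsymbol{H}$ guarantees $\boldsymbol{A} \neq 1$, so $H' := AM$ is a closed subgroup of $H$ to which the main theorem applies. Since $H' \subseteq H$, uniform nondivergence of $H'$ on $X$ implies the same for $H$.

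The crux of the argument is transferring the no-quasiparabolic condition from $\boldsymbol{H}$ to $\boldsymbol{A}\boldsymbol{M}$, as $\boldsymbol{A}\boldsymbol{M}$ is in general strictly smaller. I would argue by contraposition: suppose $g\boldsymbol{A}\boldsymbol{M}g^{-1}$ fixes a highest weight vector $v \in \boldsymbol{V}(\mathbb{Q})$ in some absolutely irreducible $\mathbb{Q}$-representation of $\boldsymbol{G}$; the task is to upgrade this to a fixed highest weight vector for $g\boldsymbol{H}g^{-1}$. The ``extra'' factors of $\boldsymbol{H}$ beyond $\boldsymbol{A}\boldsymbol{M}$ are the $\mathbb{R}$-anisotropic center of $\boldsymbol{L}$, any anisotropic simple factors of $[\boldsymbol{L},\boldsymbol{L}]$, and $R_u(\boldsymbol{H})$. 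Each normalizes $\boldsymbol{A}\boldsymbol{M}$, and one would like each to act trivially on $\mathbb{Q}v$ via the following observations: an $\mathbb{R}$-anisotropic $\mathbb{Q}$-torus admits no nontrivial $\mathbb{Q}$-character, a semisimple group has no characters at all, and a unipotent group acts by the trivial character on any line it stabilizes. The hard part will be ensuring that these extra factors stabilize the specific line $\mathbb{Q}v$ rather than permuting it with other $\boldsymbol{A}\boldsymbol{M}$-fixed vectors in the representation; this may require a careful choice of $v$, or a descent to an irreducible sub-representation adapted to the full group $\boldsymbol{H}$. Granting this transfer, the main theorem applied to $AM$ yields uniform nondivergence of $AM$, and hence of $H$.
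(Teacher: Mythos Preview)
The statement you are attempting to prove is a \emph{conjecture} in the paper, not a theorem: the authors explicitly state that they settle it only when $\boldsymbol{H}$ is reductive (their Theorem~\ref{Theorem: main theorem when A is algebraic}) or when $\boldsymbol{G}$ is $\mathbb{Q}$-split, and that the general case remains open. So there is no ``paper's own proof'' to compare against, and your proposal is an attempt at something the paper does not claim.

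Your reduction strategy has a genuine gap precisely at the point you flag as ``the hard part''. You want to show that if some conjugate of $\boldsymbol{L}=\boldsymbol{A}\boldsymbol{M}$ lies in a proper $\mathbb{Q}$-quasiparabolic, then so does some conjugate of $\boldsymbol{H}$. But this implication is exactly what is not known. The unipotent radical $R_u(\boldsymbol{H})$ is normalized by $\boldsymbol{L}$, so it permutes the space of $\boldsymbol{L}$-fixed vectors; it has no reason to fix the particular highest weight line $\mathbb{Q}v$, and ``descending to an irreducible sub-representation adapted to $\boldsymbol{H}$'' does not help, since there is no mechanism forcing such a sub-representation to contain a \emph{rational} highest weight vector. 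Concretely, it can happen that $\boldsymbol{L}$ sits inside a $\mathbb{Q}$-quasiparabolic while no conjugate of $\boldsymbol{H}$ does; in that situation the main theorem tells you $L$ is \emph{not} uniformly nondivergent, and your argument yields nothing about $H$. The paper's treatment of the $\mathbb{Q}$-split case avoids this entirely by passing to the observable hull of $\boldsymbol{H}$ and invoking Sukhanov's theorem, which in the split case forces the hull into a $\mathbb{Q}$-quasiparabolic; that argument does not extend to non-split $\boldsymbol{G}$, which is why the conjecture is still open.

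Two smaller remarks: first, under the paper's definition of $\mathbb{R}$-isotropic the Levi $\boldsymbol{L}$ is already an almost direct product of an $\mathbb{R}$-split torus and a semisimple group without compact factors, so your detour through anisotropic factors is unnecessary. Second, the centralizer of a maximal $\mathbb{R}$-split torus inside $[\boldsymbol{L},\boldsymbol{L}]$ is a Levi subgroup and hence reductive, not semisimple as you assert.
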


Recall that an $\mathbb{R}$-algebraic group $\boldsymbol{H}$ is said to be $\mathbb{R}$-\textit{isotropic} if and only if in a Levi decomposition, its reductive part is an almost direct product of an $\mathbb{R}$-split torus and a semisimple group without compact factors.

To allow for a more general class of closed subgroups, one should allow in item 2 above a ``compact modification''. Moreover, Conjecture \ref{Conjecture} is wrong without the algebraicity assumption, see Question \ref{Question when H is nonalgebraic} below.

One may wish to compare this nondivergence criterion with the one obtained in
\cite[Theorem 1.7]{Zhang2021} 
(cf. \cite{Eskin_Mozes_Shah_1997_Nondivergence_of_translates_of_certain_algebraic_measures_MR1437473})
in a related but different context.

Unfortunately, combining approaches mentioned above in a naive way does not seem to yield a proof of the conjecture. Partial progress toward the above Conjecture \ref{Conjecture} has been made in our previous work \cite{Zhang_Zhang_2021_Nondivergence_on_homogeneous_spaces_and_rigid_totally_geodesics} and an application of this partial result has been found to obtain finiteness result of totally geodesic submanifolds with bounded volume \cite[Theorem 1.5]{Zhang_Zhang_2021_Nondivergence_on_homogeneous_spaces_and_rigid_totally_geodesics}. Nevertheless, in this paper we settle the conjecture in the affirmative when $\boldsymbol{H}$ is reductive (see Theorem \ref{Theorem: main theorem when A is algebraic}).

We remark that Conjecture \ref{Conjecture} is also settled when $\boldsymbol{G}$ is $\mathbb{Q}$-split, since it reduces to the reductive case.
To see this, let us assume that the unipotent radical of $\boldsymbol{H}$ is nontrivial
and it suffices to explain why item $2$ implies item 1 in the conjecture above.
Let $\boldsymbol{H'}$ be the \text{observable hull} of $\boldsymbol{H}$ in $\boldsymbol{G}$, that is, the smallest subgroup of $\boldsymbol{G}$ containing $\boldsymbol{H}$ with the property that
\[
v \text{ is fixed by }\rho(\boldsymbol{H}) \implies
v \text{ is fixed by }\rho(\boldsymbol{H'})
\]
for every finite-dimensional representation $\rho$  of $\boldsymbol{G}$ and every vector $v$.
If $\boldsymbol{H'}$ is equal to $\boldsymbol{G}$, then it is called \textit{epimorphic} and by    \cite[Theorem 9]{Weiss_Finite-dimensional_representations_and_subgroup_actions_on_homogeneous_spaces}, the action of $H'$ is minimal and hence uniformly non-divergent. 
So let us assume that  $\boldsymbol{H'}$ is not equal to $\boldsymbol{G}$. Then by appealing to Sukhanov's theorem \cite{Sukhanov_The_description_of_observable_subgroups_of_linear_algebraic_groups} (see \cite[Theorem B]{BacThang2010}, the $(3)\iff (4)$ part) and note that our $\boldsymbol{G}$ is assumed to be $\mathbb{Q}$-split, $\boldsymbol{H}'$ is contained in a proper $\mathbb{Q}$-quasiparabolic subgroup of $\boldsymbol{G}$, which violates item $2$ in the conjecture.

In this article, we also study the case where $\Gamma$ is not an arithmetic lattice of $\boldsymbol{G}$ (see Theorem \ref{Theorem: nonarithmetic case}). By the Margulis arithmeticity theorem, it is necessary that the algebraic group $\boldsymbol{G}$ is of real rank 1. It turns out that in this case, a maximal $\mathbb{R}$-split torus of $\boldsymbol{G}$ play a crucial role in the uniform nondivergence property of subgroup actions.

One may also consider a stronger set-intersection property replacing a compact set $C$ by a ``deformation retract'' of the whole space. For a small sample of such research, see \cite{Levin_Shapira_Weiss_2016_Closed_orbits_for_the_diagonal_group_and_well_rounded_lattices_MR3605031,McMullen_2005_Minkowski's_conjecture_well_rounded_lattices_and_topological_dimension_MR2138142,Shapira_Weiss_2016_Stable_lattices_and_the_diagonal_group_MR3519540,Solan_2019_Stable_and_well_rounded_lattices_in_diagonal_orbits_MR4040835,Solan_Tamam_2022_On_topologically_big_divergent_trajectories}.

\subsection{Notations}
We will use the following conventions throughout:
\begin{itemize}
    \item Capitalized boldface letters $\boldsymbol{A},\boldsymbol{B},...$ and so on are often reserved for algebraic groups. The corresponding uppercase Roman letters $A,B,...$ denote their real points (if they are defined over $\mathbb{R}$). And lowercase Gothic letters $\mathfrak{a},\mathfrak{b},...$ are used for their (real) Lie algebras.
    \item For an algebraic group $\boldsymbol{A}$, let $\mathrm{X}(\boldsymbol{A})$ denote the character group of $\boldsymbol{A}$ consisting of algebraic group morphisms from $\boldsymbol{A}$ to $\mathbb{C}^{\times}$ over $\mathbb{C}$.
    Assume that $\boldsymbol{A}$ is contained in another algebraic group $\boldsymbol{B}$. For every $g\in \boldsymbol{B}$ and $\chi\in \mathrm{X}(g^{-1}\boldsymbol{A}g)$, we define a character $g(\chi)\in \mathrm{X}(\boldsymbol{A})$ by $g(\chi)(a):=\chi(g^{-1}a g)$ for every $a\in \boldsymbol{A}$. 
    \item For an algebraic group $\boldsymbol{A}$ (resp. a Lie group $A$), we let $\boldsymbol{A}^{\circ}$ (resp. $A^{\circ}$) denote its identity component with respect to the Zariski topology (resp. analytic topology). 
    \item For two groups living in some ambient group, $\mathrm{N}_{A}(B)$ (resp. $\mathrm{Z}_{A}(B)$) denotes the normalizer (resp. centralizer) of $B$ in $A$. If $\boldsymbol{A}$ and $\boldsymbol{B}$ are algebraic groups, we write $\boldsymbol{\mathrm{N}}_{\boldsymbol{A}}(\boldsymbol{B})$ or $\boldsymbol{\mathrm{Z}}_{\boldsymbol{A}}(\boldsymbol{B})$. The notation $\mathrm{Z}(A)$ (resp. $\boldsymbol{\mathrm{Z}}(\boldsymbol{A})$) denotes the center of a group $A$ (resp. an algebraic group $\boldsymbol{A}$). 
\end{itemize}
These notations should cause little confusion since for a connected algebraic group $\boldsymbol{B}$ over $\mathbb{R}$, the real points of $\boldsymbol{\mathrm{N}}_{\boldsymbol{A}}(\boldsymbol{B})$ or $\boldsymbol{\mathrm{Z}}_{\boldsymbol{A}}(\boldsymbol{B})$ coincide with  $\mathrm{N}_{A}(B)$ or $\mathrm{Z}_{A}(B)$ respectively. This is because the real points of a connected real algebraic group is Zariski dense. Same remarks apply to the center operation.

Let us now fix the playground.
\begin{itemize}
    \item Throughout the paper, we let $\boldsymbol{G}$ be a connected semisimple $\mathbb{Q}$-algebraic group, and $\Gamma\subset G$ be an arithmetic lattice. 
\end{itemize}

In addition to this, we also fix the following data associated with $\boldsymbol{G}$:
\begin{itemize}
    \item 
    Let $\boldsymbol{T}$ be a maximal $\mathbb{R}$-split torus of $\boldsymbol{G}$ containing a maximal $\mathbb{Q}$-split torus $\boldsymbol{S}$.
    \item 
     Fix an ordering of $\mathbb{Q}$-simple roots. 
     Let $r:=\mathrm{rank}_{\mathbb{Q}}(\boldsymbol{G})$. Let $\boldsymbol{P}_1,\cdots,\boldsymbol{P}_r$ be the standard maximal parabolic $\mathbb{Q}$-subgroups of $\boldsymbol{G}$, and $\chi_1,\cdots,\chi_r$ be the corresponding $\mathbb{Q}$-fundamental weights. 
     Each $\chi_i$ may be viewed as a character on $\boldsymbol{P}_i$ or $\boldsymbol{T}$, for $1\leq i\leq r$ (for details, see Section \ref{Section: Preliminaries}). 
     \item
     Let $\mathrm{W}(G)\cong \mathrm{N}_G(T)/\mathrm{Z}_G(T)$ be an $\mathbb{R}$-Weyl group of $\boldsymbol{G}$;
    
    \item We fix a Cartan involution $\tau:\boldsymbol{G}\to \boldsymbol{G}$ such that $\tau(a)=a^{-1}$ for any $a\in \boldsymbol{T}$.

\end{itemize}
    
 Later we will consider an algebraic group $\boldsymbol{M}$ and a maximal $\mathbb{R}$-split torus $\boldsymbol{D}$ in $\boldsymbol{\mathrm{Z}_G(M)}$. In this case, we let $\mathrm{W}(\mathrm{Z}_G(M))\cong \mathrm{N}_{\mathrm{Z}_G(M)}(D)/\mathrm{Z}_{\mathrm{Z}_G(M)}(D)$ be an $\mathbb{R}$-Weyl group of $\boldsymbol{\mathrm{Z}}_{\boldsymbol{G}}(\boldsymbol{M})$.

\subsection{Main results}
One of our main results, which confirms special cases of the Conjecture \ref{Conjecture}, is the following:

\begin{theorem}\label{Theorem: main theorem when A is algebraic}
Let $\boldsymbol{M}$ be a connected semisimple $\mathbb{R}$-algebraic subgroup of $\boldsymbol{G}$ without compact factors, and $\boldsymbol{A}$ be an $\mathbb{R}$-split torus in $\boldsymbol{\mathrm{Z}_G(M)}$.
Let $\boldsymbol{H}=\boldsymbol{A}\boldsymbol{M}$ and $\boldsymbol{D}$ be a maximal $\mathbb{R}$-split torus of $\boldsymbol{\mathrm{Z}_G(M)}$ containing $\boldsymbol{A}$.
Then 
the following statements are equivalent:
\begin{itemize}
    \item[(i)] The action of $H$ on $G/\Gamma$ is \textit{not} uniformly non-divergent;
    

    \item[(ii)] There exist $g\in G$ and a nonempty subset $I \subset \{ 1,\cdots,r\}$  such that $g^{-1}\boldsymbol{H}g \subset  \bigcap_{i\in I} \boldsymbol{P}_{i}$, and $\left\{
    \chi_i,\;i\in I
    \right\}$ are linearly \textit{dependent} as (\textit{algebraic}) characters\footnote{We say that a set of linear functionals are linearly dependent as algebraic characters (or characters for simplicity) if they are linearly dependent over $\mathbb{Z}$.} on $g^{-1}\boldsymbol{H}g$;
    
    \item[(iii)] There exist $g\in G$ and a connected reductive $\Q$-subgroup $\boldsymbol{L}$ of $\boldsymbol{G}$ containing $g^{-1}\boldsymbol{H}g$ such that $\boldsymbol{\mathrm{Z}_G{(L)}}/\boldsymbol{\mathrm{Z}(L)}$ is not $\Q$-anisotropic;
    
    \item[(iv)] There exist $g\in G$, a $\Q$-representation $\rho: \boldsymbol{G} \to \mathrm{GL}(\boldsymbol{V})$, and a vector $v\in \boldsymbol{V}(\Q)$ such that $0\in \overline{\rho(G)\cdot v}$ (i.e. $v$ is unstable) and $v$ is fixed by $g^{-1}H g$.
\end{itemize}
\end{theorem}

\medskip

{ Let us briefly mention some previous results in the setting of Theorem \ref{Theorem: main theorem when A is algebraic}. Tomanov and Weiss \cite{Tomanov_Weiss_2003_Closed_orbits_for_actions_of_maximal_tori_on_homogeneous_spaces_MR1997950} proved that if $H$ is any torus containing a maximal $\R$-split torus of $G$, then the action of $H$ on $G/\Gamma$ is uniformly non-divergent. In our earlier work \cite{Zhang_Zhang_2021_Nondivergence_on_homogeneous_spaces_and_rigid_totally_geodesics}, uniform non-divergence property was established for those reductive group $H$ with no compact factors satisfying that $\boldsymbol{\mathrm{Z}_G{(H)}}/\boldsymbol{\mathrm{Z}(H)}$ is $\R$-anisotropic. Both of these above mentioned results fall into the scope of Theorem \ref{Theorem: main theorem when A is algebraic}. }

\begin{remark}
We make some useful comments for Theorem \ref{Theorem: main theorem when A is algebraic}.
\begin{itemize}
   \item[(1)]  {Item (ii) in Theorem \ref{Theorem: main theorem when A is algebraic} can be regarded as a checkable criterion for item (i), while items (iii) and (iv) are algebraic characterizations of item (i).}

   \item[(2)] { As all the maximal $\R$-split tori in $\boldsymbol{G}$ are conjugated to each other, the following condition (ii') is equivalent to Theorem \ref{Theorem: main theorem when A is algebraic} item (ii). Hence it is worthwhile to note that (ii') can also be used as a criterion for uniform nondivergence property of $H$-action.
    }
    
\begin{itemize}
 \item[(ii')] { For every (equivalently, there exists) $g\in G$ such that $g^{-1}\boldsymbol{D}g\subset \boldsymbol{T}$, the following holds: There exist nonempty $I\subset\{1,\cdots,r\}$, $w\in \mathrm{W}(G)$, and $w'\in \mathrm{W}(\mathrm{Z}_G(g^{-1}Mg))$ such that $g^{-1}\boldsymbol{M}g \subset \bigcap_{i\in I} w\boldsymbol{P_i}w^{-1}$,  and
    $\{
    w'w(\chi_i),\; i\in I
    \}$ are linearly \textit{dependent} as (algebraic) characters on $g^{-1}\boldsymbol{A}g$. 
  }  
\end{itemize}

\end{itemize}

\end{remark}

Theorem \ref{Theorem: main theorem when A is algebraic} needs to assume $\boldsymbol{A}$ to be algebraic. We have the following more general Theorem \ref{Theorem: main theorem} dropping the algebraicity assumption on $\boldsymbol{A}$, whose item (2) implies that
 Theorem \ref{Theorem: main theorem when A is algebraic} does not hold without assuming $\boldsymbol{A}$ to be algebraic. This is because a set of linear functionals independent over $\mathbb{Z}$ is not necessarily independent over $\R$ (See Remark \ref{remark for main theorems} (2)). Indeed,  Theorem \ref{Theorem: main theorem when A is algebraic} will be deduced from Theorem \ref{Theorem: main theorem} in the next subsection.

\medskip

Recall that we have fixed a Cartan involution $\tau:\boldsymbol{G}\to \boldsymbol{G}$ such that $\tau(a)=a^{-1}$ for any $a\in \boldsymbol{T}$.
\begin{theorem}\label{Theorem: main theorem}
Let $\boldsymbol{M}$ be a semisimple $\mathbb{R}$-algebraic subgroup of $\boldsymbol{G}$ without compact factors  and $A$ be a Lie subgroup contained in $D$, where $\boldsymbol{D}$ is a maximal $\mathbb{R}$-split torus in $\boldsymbol{\mathrm{Z}_G(M)}$. Let $H=AM$.  Assume that $\boldsymbol{D}\subset \boldsymbol{T}$. Then the following statements are equivalent:
\begin{itemize}
\item[(1)] The action of $H$ on $G/\Gamma$ is \textit{not} uniformly non-divergent. 


\item[(2)] There exist $w\in \mathrm{W}(G)$, $w^{\prime}\in \mathrm{W}(\mathrm{Z}_G(M))$, and a nonempty subset $I\subset \{1,\cdots, r\}$ such that $w^{-1}\boldsymbol{M} w\subset  \bigcap_{i\in I} \boldsymbol{P}_{i} $, $w^{-1}\boldsymbol{M} w\subset  \bigcap_{i\in I} \tau(\boldsymbol{P}_i)$, and $\{w^{\prime}w(\chi_{i}):i\in I\}$ are linearly \textit{dependent} as linear functionals on $\mathrm{Lie}(A)$\footnote{i.e. they are linearly dependent over $\mathbb{R}$.}.
    
\item[(3)] For some $k\geq 1$, there exist linear $\mathbb{Q}$-representations $\rho_i:\boldsymbol{G}\to \mathrm{GL}(\boldsymbol{V}_i)$ with norms $\norm{\cdot}_i$ on $V_i:=\boldsymbol{V}_i(\mathbb{R})$, and nonzero vectors $v_i\in \boldsymbol{V}_i(\mathbb{Q})$ for $i=1,\cdots,k$, such that the following holds: For any $n\in \mathbb{N}$, there exists $g_n\in G$ such that for any $h\in H$, there exists $i\in \{1,\cdots,k\}$ with \[
\norm{\rho_i(h g_n)v_i}_i<\frac{1}{n}.
\]
\end{itemize}
\end{theorem}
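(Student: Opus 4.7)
The plan is to establish the chain $(1)\Rightarrow(1')\Rightarrow(2)\Rightarrow(3)\Rightarrow(1)$. The implication $(1)\Rightarrow(1')$ is immediate from $H'\subset H$: every $H'$-orbit is contained in an $H$-orbit, so a compact set meeting every $H'$-orbit automatically meets every $H$-orbit, whence failure of uniform nondivergence for $H$ transfers to $H'$.

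For $(3)\Rightarrow(1)$ I would invoke a Mahler-type criterion for $G/\Gamma$. By reduction theory the compact subsets of $G/\Gamma$ are characterized by uniform lower bounds on norms of the form $\|\rho(g)w\|$, where $\rho$ runs through finitely many rational representations of $\boldsymbol{G}$ and $w$ through primitive vectors in a $\Gamma$-invariant lattice; since the $v_i$ supplied by (3) are rational and $\Gamma$-orbits of rational vectors are discrete, the assumption that $\|\rho_i(hg_n)v_i\|<1/n$ holds for \emph{every} $h\in H$ forces the entire orbit $Hg_n\Gamma$ to leave any prescribed compact set once $n$ is large, contradicting uniform nondivergence.

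The equivalence $(2)\Leftrightarrow(3)$ is essentially highest-weight representation theory. Each $\mathbb{Q}$-fundamental weight $\chi_i$ is the highest weight of an absolutely irreducible $\mathbb{Q}$-representation $\rho_i$ of $\boldsymbol{G}$ with a rational highest weight vector $v_i$ whose projective stabilizer is $\boldsymbol{P}_i$. The double containment $w^{-1}\boldsymbol{M}w\subset\boldsymbol{P}_i\cap\tau(\boldsymbol{P}_i)$ forces $w^{-1}Mw$ to genuinely fix $v_i$, not merely stabilize its line, because intersecting with the opposite parabolic kills the unipotent-radical action on the highest weight line. Writing $g_n=a_n^{-1}\cdot(\text{Weyl conjugator})$ with $a_n\in A$ and using $A\subset\mathrm{Z}_G(M)$, the evaluation $\|\rho_i(hg_n)v_i\|$ for $h=a'm\in H$ reduces to $|w'w(\chi_i)(a'a_n^{-1})|\cdot\|v_i\|$, and the linear dependence of $\{w'w(\chi_i):i\in I\}$ on $\mathrm{Lie}(A)$ then lets us choose a divergent direction for $a_n$ such that at every $a'\in A$ at least one index $i\in I$ makes this ratio small, producing (3); the converse extracts the highest weights and their dependence from the representations furnished by (3).

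The principal difficulty lies in $(1')\Rightarrow(2)$. Starting from a divergent sequence $h_n g_n\Gamma\to\infty$ with $h_n=a_n\lambda_n$ ($a_n\in A$, $\lambda_n\in\Lambda$), the plan is to first apply the quantitative nondivergence of Benoist--Quint and Eskin--Margulis to the random walk on $G/\Gamma$ driven by the Zariski-dense finitely generated subgroup $\Lambda\subset M$; this precludes purely $\Lambda$-driven divergence unless caused by a fixed $\mathbb{Q}$-algebraic obstruction and reduces the problem to divergence ``essentially along $A$''. For the residual toral divergence, the topological methods of McMullen and Solan--Tamam, combined with Borel's $\mathbb{Q}$-reduction theory, allow one to extract a nonempty $I\subset\{1,\ldots,r\}$, a Weyl element $w\in\mathrm{W}(G)$ placing $w^{-1}\boldsymbol{M}w$ inside the Levi $\bigcap_{i\in I}(\boldsymbol{P}_i\cap\tau(\boldsymbol{P}_i))$, and a linear dependence among the corresponding restrictions to $\mathrm{Lie}(A)$; the second Weyl element $w'\in\mathrm{W}(\mathrm{Z}_G(M))$ then records the residual gauge freedom in positioning $A$ inside $\mathrm{Z}_G(M)$ modulo its own Weyl group. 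The delicate step will be coupling the Benoist--Quint mechanism with the topological torus argument so that their outputs identify the same $\mathbb{Q}$-parabolic data; as the authors emphasize in the introduction, a naive concatenation of these techniques does not suffice, and designing a compatible coupling is where most of the work will reside.
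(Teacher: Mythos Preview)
Your overall chain is logically valid, but the route you have chosen for the hard implication is both different from and harder than the paper's, and your sketch of it does not locate the right tools.

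First, a structural point. The paper does \emph{not} prove $(1')\Rightarrow(2)$ directly. Instead it isolates $(1)\Leftrightarrow(1')$ at the outset: the nontrivial direction $(1')\Rightarrow(1)$ uses Benoist--Quint's drift function (their Proposition~5.3) to show that if some $M$-orbit meets a bounded set, then already a $\Lambda$-orbit meets a possibly larger bounded set. After this, Benoist--Quint is finished; the main implication is $(1)\Rightarrow(2)$, proved with the full semisimple group $M$ and no $\Lambda$ in sight. Your plan to carry $\Lambda$ through the topological argument and then ``couple'' the Benoist--Quint mechanism with the torus machinery is exactly the naive concatenation the authors warn against; the paper's decoupling is what makes the argument go through.

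Second, the direction of the main step is inverted. The paper proves $(1)\Rightarrow(2)$ by contrapositive: assume $\neg(2)$ and \emph{produce} a compact set meeting every $H$-orbit. Concretely, for fixed $g$ one covers $A$ by open sets $U^{A,g}_{\mathfrak{u}}$ indexed by $\mathfrak{u}\in\mathfrak{R}$ plus a ``compact'' piece $U_0^{A,g}$; the $U^{A,g}_{\mathfrak{u}}$ come from an explicit cusp description (Proposition~3.5 here, imported from the authors' earlier paper) rather than from abstract reduction theory. The crucial step is Proposition~3.8: if finitely many $U^M_{\mathfrak{v}_j}$ overlap, then the span of the $\mathfrak{v}_j$ is unipotent. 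This is proved not with Benoist--Quint random walks but with Dani--Margulis quantitative nondivergence for unipotent one-parameter subgroups, applied inside each $^0P_i/\Lambda_i$, together with a pigeonhole argument to find a single time $t_0$ good for all indices simultaneously. Only then does Solan's invariance-dimension covering theorem apply to force $U_0^{A,g}\neq\emptyset$. Your proposal mentions neither the cusp covering nor the Dani--Margulis step, and these are where the work is.

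Finally, your claim that $(2)\Leftrightarrow(3)$ is bidirectional highest-weight algebra is not how the paper proceeds: only $(2)\Rightarrow(3)$ is done this way, and it needs both the highest-weight vector $p_{i_j}$ and its Cartan-opposite $p_{i_j}^-$ (this is why the $\tau(\boldsymbol{P}_i)$ containment appears in (2)); the return $(3)\Rightarrow(2)$ goes through $(1)$.
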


\begin{remark}\label{remark for main theorems}
We have several comments for Theorem \ref{Theorem: main theorem}.
\begin{itemize}
\item[(1)] The assumption that $\boldsymbol{D}\subset \boldsymbol{T}$ makes the statement of Theorem \ref{Theorem: main theorem} (2) clean. It loses no generality because all maximal $\mathbb{R}$-split tori in $\boldsymbol{G}$ are conjugated to each other, and conjugation operation does not affect the uniform nondivergence property of ${H}$.

    \item[(2)] For condition (2) of Theorem \ref{Theorem: main theorem}, we note that when $A=\boldsymbol{A}(\mathbb{R})$ is algebraic, linear dependence of $\{w^{\prime}w(\chi_{i}):i\in I\}$ over $\R$ on $\mathrm{Lie}(A)$ is equivalent to linear dependence of $\{w^{\prime}w(\chi_{i}):i\in I\}$ over $\Z$ on $\mathrm{Lie}(A)$ (see Corollary \ref{Corollary: linear dependency in algebraic torus}). This equivalence does not hold when $A$ is not algebraic.

\item[(3)] Condition (3) of Theorem \ref{Theorem: main theorem} is an analog of  $(iv)$ in Theorem \ref{Theorem: main theorem when A is algebraic} in the situation where $H$ is nonalgebraic (equivalently, $A$ is nonalgebraic). Unlike the unique algebraic obstruction in $(iv)$ of Theorem \ref{Theorem: main theorem when A is algebraic}, one could find finitely many such obstructions when $H$ is nonalgebraic.

\end{itemize}
\end{remark}

We observe that uniform nondivergence property of $H$ as in Theorem \ref{Theorem: main theorem} is preserved if the semisimple part of $H$ is replaced by a Zariski dense finitely generated subgroup. 

\begin{corollary}\label{Cor: Zariski dense replacement}
{Under the assumptions of Theorem \ref{Theorem: main theorem}, let $\Lambda$ be a finitely generated Zariski dense subgroup of $M$ and $H'=A\Lambda$. Then the action of $H'$ on $G/\Gamma$ is uniformly non-divergent if and only if the action of $H$ on $G/\Gamma$ is uniformly non-divergent.
}
    
\end{corollary}

\begin{proof}
The direct implication is immediate since $H'\subset H$. For the converse, the proof follows from
\cite[Remark 5.2, Proposition 5.3]{BenoistQuint2012}. Let us complete the details below.

Let $B \subset G/\Gamma$ be a bounded set.
It suffices to show that there exists a possibly larger bounded set $B'$ of $G/\Gamma$ such that for every $x\in G/\Gamma$, 
\begin{equation*}
    M\cdot x \cap B \neq \emptyset 
    \implies \Lambda\cdot x \cap B' \neq \emptyset.
\end{equation*}
Without loss of generality, we assume that ${G}=\mathrm{SL}_n(\R)$ and $\Gamma=\mathrm{SL}_n(\Z)$ for some $n$.
For $\ep>0$, let $f_{\ep}: G/\Gamma \to[0,\infty]$ be a proper function as in \cite[Equation (5.1)]{BenoistQuint2012} ($H^{\text{nc}}$ there should be replaced by our $M$). By \cite[Remark 5.2]{BenoistQuint2012} and Mahler's criterion, we find $\ep_0>0$ such that 
\begin{equation*}
    M\cdot x \cap B \neq \emptyset \implies f_{\ep_0}(x) < \infty .
\end{equation*}
Then \cite[Proposition 5.3]{BenoistQuint2012} implies that there exists some $C_0>1$ such that for $x\in G/\Gamma$ satisfying $M\cdot x\cap B\neq \emptyset$ there exists $\gamma_x \in \Lambda$ such that
\begin{equation}\label{equa1}
    f_{\ep_0}(\gamma_x\cdot x) < C_0.
\end{equation}
As $f_{\ep_0}$ is a proper function,
\begin{equation*}
    B':= \left\{y\in G/\Gamma,\;  f_{\ep_0}(y)< C_0\right\}
\end{equation*}
is the desired bounded set.

\end{proof}

In light of Theorem \ref{Theorem: main theorem} and Conjecture \ref{Conjecture}, it is curious to ask the following:
\begin{question}\label{Question when H is nonalgebraic}
Let $\boldsymbol{G}$ be a semisimple $\mathbb{Q}$-algebraic group, and $G=\boldsymbol{G}(\mathbb{R})$. Let $\Gamma$ be an arithmetic lattice in $G$ and $X=G/\Gamma$. Let $H$ be a closed subgroup of $G$, not necessarily algebraic. Consider the following:
\begin{itemize}
\item[1.] The action of $H$ is not uniformly non-divergent on $X$;

\item[2.] Up to $G$-conjugacy class of $H$, Condition $(3)$ of Theorem \ref{Theorem: main theorem} holds.
\end{itemize}
Is item $1$ equivalent to item $2$ above?
\end{question}
By Proposition \ref{Proposition: Not uniformly non-divergent criterion}, it is clear that item $2$ implies item $1$. And Theorem \ref{Theorem: main theorem} gives a affirmative answer to the above question in the special case where $H=AM$, with $\boldsymbol{M}$ semisimple and $A\subset \mathrm{Z}_G(M)$.

We note the following immediate consequences of Theorem \ref{Theorem: main theorem}:

\begin{corollary}\label{Corollary 1}
Let $\boldsymbol{M}$, $\boldsymbol{D}$, $A$, and $H$ be as in Theorem \ref{Theorem: main theorem}. Assume that $\boldsymbol{D}\subset \boldsymbol{T}$. Suppose that the following holds: for any $w\in \mathrm{W}(G)$, any $w^{\prime}\in \mathrm{W}(\mathrm{Z}_G(M))$, and any nonempty subset $I\subset \{1,\cdots r\}$, if $\{w(\chi_{i}):i\in I\})$ are linearly independent as linear functionals on $\mathrm{Lie}(D)$, then $\{w^{\prime}w(\chi_{i}):i\in I\}$ are linearly independent as linear functionals on $\mathrm{Lie}(A)$. Then the action of $H$ on $G/\Gamma$ is uniformly non-divergent.
\end{corollary}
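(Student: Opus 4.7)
The plan is to derive the corollary directly from Theorem \ref{Theorem: main theorem} by contrapositive. Suppose, for contradiction, that under the stated hypothesis the action of $H$ on $G/\Gamma$ is not uniformly nondivergent. Then condition $(1)$ of Theorem \ref{Theorem: main theorem} holds, and hence by the equivalence $(1)\iff(2)$ we obtain a triple $(w,w',I)$ with $w\in \mathrm{W}(G)$, $w'\in \mathrm{W}(\mathrm{Z}_G(M))$, and nonempty $I\subset\{1,\ldots,r\}$, satisfying
\[
w^{-1}\boldsymbol{M}w \subset \bigcap_{i\in I}\boldsymbol{P}_i,
\qquad
w^{-1}\boldsymbol{M}w \subset \bigcap_{i\in I}\tau(\boldsymbol{P}_i),
\]
and such that $\{w'w(\chi_i):i\in I\}$ is linearly \emph{dependent} as a family of linear functionals on $\mathrm{Lie}(A)$.

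Next I would invoke the observation recorded in the fourth bullet of the remark preceding the corollary: because $\boldsymbol{D}$ is chosen to be a \emph{maximal} $\mathbb{R}$-split torus of $\boldsymbol{\mathrm{Z}_G(M)}$, any $w\in \mathrm{W}(G)$ whose conjugate of $\boldsymbol{M}$ lands in $\bigcap_{i\in I}\boldsymbol{P}_i$ automatically forces $\{w(\chi_i):i\in I\}$ to be linearly \emph{independent} as functionals on $\mathrm{Lie}(D)$ (equivalently, as characters of $\boldsymbol{D}$). This is precisely the content of the argument that will be used in the proof of $(iii)\Rightarrow(iv)$ of Theorem \ref{Theorem: main theorem when A is algebraic}, so no extra work is needed here beyond citing it.

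With that independence on $\mathrm{Lie}(D)$ in hand, the hypothesis of the corollary applies verbatim to the triple $(w,w',I)$ and yields that $\{w'w(\chi_i):i\in I\}$ must be linearly \emph{independent} as functionals on $\mathrm{Lie}(A)$. This contradicts the dependence supplied by condition $(2)$, completing the proof. The argument is essentially a one-step contrapositive, and the only nontrivial ingredient is the linear-independence remark on $\mathrm{Lie}(D)$; there is no real obstacle, as it is built directly into the way $\boldsymbol{D}$ is chosen.
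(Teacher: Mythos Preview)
Your proposal is correct and is precisely the intended argument: the paper states Corollary \ref{Corollary 1} without proof as an immediate consequence of Theorem \ref{Theorem: main theorem}, relying on exactly the fourth bullet of the preceding remark (linear independence of $\{w(\chi_i)\}_{i\in I}$ on $\mathrm{Lie}(D)$ whenever $w^{-1}\boldsymbol{M}w\subset \bigcap_{i\in I}\boldsymbol{P}_i$) to feed into the corollary's hypothesis and contradict condition (2).
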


\begin{corollary}\cite[Theorem 1.2]{Zhang_Zhang_2021_Nondivergence_on_homogeneous_spaces_and_rigid_totally_geodesics}
Let $\boldsymbol{M}$, $\boldsymbol{D}$, $A$, and $H$ be as in Theorem \ref{Theorem: main theorem}. Assume that $A=D$, then the action of $H$ on $G/\Gamma$ is uniformly non-divergent.
\end{corollary}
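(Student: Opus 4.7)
The plan is to deduce this directly from Corollary \ref{Corollary 1}. Since uniform nondivergence depends only on the $G$-conjugacy class of $H$, and since all maximal $\mathbb{R}$-split tori of $\boldsymbol{G}$ are $G$-conjugate, I would begin by conjugating so that $\boldsymbol{D}\subset\boldsymbol{T}$, placing us in the setting of Corollary \ref{Corollary 1}. It then suffices to verify the sufficient condition stated there: for any $w\in \mathrm{W}(G)$, any $w'\in \mathrm{W}(\mathrm{Z}_G(M))$, and any nonempty $I\subset\{1,\dots,r\}$, linear independence of $\{w(\chi_i)|_{\mathrm{Lie}(D)} : i\in I\}$ should imply linear independence of $\{w'w(\chi_i)|_{\mathrm{Lie}(A)} : i\in I\}$.

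The key observation, which makes the verification essentially immediate in the case $A=D$, is that $w'$, viewed as an element of $\mathrm{W}(\mathrm{Z}_G(M))=\mathrm{N}_{\mathrm{Z}_G(M)}(D)/\mathrm{Z}_{\mathrm{Z}_G(M)}(D)$, is represented by an element of $G$ that normalizes $\boldsymbol{D}$. Conjugation by such a representative acts on $\mathrm{Lie}(D)$ as a linear automorphism, and dually the action $\chi\mapsto w'(\chi)$ on $\mathrm{Lie}(D)^*$ is also a linear automorphism. In particular, it sends linearly independent sets of functionals to linearly independent sets.

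Since the assumption $A=D$ gives $\mathrm{Lie}(A)=\mathrm{Lie}(D)$, applying this automorphism to the hypothesized linearly independent set $\{w(\chi_i)|_{\mathrm{Lie}(D)} : i\in I\}$ yields the linearly independent set $\{w'w(\chi_i)|_{\mathrm{Lie}(D)} : i\in I\}$, so the hypothesis of Corollary \ref{Corollary 1} is satisfied, and the desired uniform nondivergence follows. Under this approach there is no real obstacle — the statement is an essentially tautological consequence of Corollary \ref{Corollary 1} once one notices the dual-action interpretation of $w'$; the only care required is keeping straight whether characters are viewed on algebraic tori, real tori, or Lie algebras, but the map induced by $w'$ is a linear automorphism at each level, so the verification proceeds uniformly.
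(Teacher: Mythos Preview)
Your proposal is correct and takes essentially the same approach as the paper: both arguments rest on the single observation that $\mathrm{W}(\mathrm{Z}_G(M))$ normalizes $D$, so the induced action of $w'$ on $\mathrm{Lie}(D)^*=\mathrm{Lie}(A)^*$ is a linear automorphism preserving linear independence. You route this through Corollary~\ref{Corollary 1}, while the paper phrases it as the failure of condition~(2) in Theorem~\ref{Theorem: main theorem}, but these are the same reasoning.
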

\begin{proof}
If $\boldsymbol{A}=\boldsymbol{D}$, then condition $(1)$ in Theorem \ref{Theorem: main theorem} is automatically satisfied since $\mathrm{W}(\mathrm{Z}_G(M))$ preserves $D$ by conjugation. Therefore, the corollary follows.
\end{proof}

\begin{corollary}\label{coro: torus orbits}
Let $\boldsymbol{M}$, $\boldsymbol{D}$, $A$, and $H$ be as in Theorem \ref{Theorem: main theorem}. Assume that $\boldsymbol{D}\subset \boldsymbol{T}$ and $\boldsymbol{M}=\{\mathrm{id}\}$, so $H=A$ and $\boldsymbol{D}= \boldsymbol{T}$. Then the action of $A$ on $G/\Gamma$ is uniformly non-divergent if and only if for any $w\in \mathrm{W}(G)$, $w(\chi_1),\cdots,w(\chi_r)$ are linearly independent as linear functionals on $\mathrm{Lie}(A)$.
\end{corollary}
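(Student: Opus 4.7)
The plan is to derive this corollary directly from Theorem \ref{Theorem: main theorem} by specializing to $\boldsymbol{M} = \{\mathrm{id}\}$. Under that hypothesis, $\boldsymbol{\mathrm{Z}_G(M)} = \boldsymbol{G}$, so $\mathrm{W}(\mathrm{Z}_G(M)) = \mathrm{W}(G)$ and the maximal $\mathbb{R}$-split torus $\boldsymbol{D}$ of $\boldsymbol{\mathrm{Z}_G(M)}$ coincides with $\boldsymbol{T}$. The subgroup $\Lambda \subset M = \{\mathrm{id}\}$ is forced to be trivial, so $H = H' = A$. Moreover, both containment conditions $w^{-1}\boldsymbol{M}w \subset \bigcap_{i\in I}\boldsymbol{P}_i$ and $w^{-1}\boldsymbol{M}w \subset \bigcap_{i\in I}\tau(\boldsymbol{P}_i)$ appearing in condition (2) of Theorem \ref{Theorem: main theorem} are then automatic.

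Consequently, the equivalence (1)$\iff$(2) of Theorem \ref{Theorem: main theorem} reduces to the statement that the $A$-action on $G/\Gamma$ fails to be uniformly nondivergent if and only if there exist $w, w' \in \mathrm{W}(G)$ and a nonempty subset $I \subset \{1,\ldots,r\}$ such that $\{w'w(\chi_i) : i \in I\}$ is linearly dependent as a family of linear functionals on $\mathrm{Lie}(A)$. Setting $w'' := w'w$ (which ranges over all of $\mathrm{W}(G)$ as $w, w'$ do) and noting that enlarging $I$ preserves linear dependence while the full set $I = \{1,\ldots,r\}$ is always an admissible choice, this condition is equivalent to the existence of some $w \in \mathrm{W}(G)$ for which $\{w(\chi_1), \ldots, w(\chi_r)\}$ is linearly dependent on $\mathrm{Lie}(A)$. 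Negating yields exactly the criterion stated in the corollary.

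Since the argument is essentially an unpacking of Theorem \ref{Theorem: main theorem}, there is no substantive obstacle; the only points requiring any attention are verifying that the reparametrization $w'' = w'w$ surjects onto $\mathrm{W}(G)$ (immediate, as $\mathrm{W}(G)$ is a group) and that the passage between ``some nonempty $I$'' and ``$I = \{1,\ldots,r\}$'' is valid for linear dependence, both of which are trivial.
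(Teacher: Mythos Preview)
Your proposal is correct and follows essentially the same approach as the paper: both arguments specialize Theorem \ref{Theorem: main theorem} to $\boldsymbol{M}=\{\mathrm{id}\}$ and observe that $\mathrm{W}(\mathrm{Z}_G(M))=\mathrm{W}(G)$, with your version simply spelling out in more detail the reparametrization $w''=w'w$ and the passage to the full index set $I=\{1,\ldots,r\}$ that the paper leaves implicit.
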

\begin{proof}
Note that when $\boldsymbol{M}=\{\mathrm{id}\}$, $\mathrm{W}(\mathrm{Z}_G(M))=\mathrm{W}(G)$. Therefore, the corollary follows by Theorem \ref{Theorem: main theorem}.
\end{proof}

To make our investigation complete, we also study the uniform nondivergence property of subgroup action on quotient $G/\Gamma$, where $\mathrm{rank}_{\mathbb{R}}(\boldsymbol{G})=1$. By the Margulis arithmeticity theorem, in this case $\Gamma$ could be a non-arithmetic lattice. The proof of Theorem \ref{Theorem: main theorem} crucially uses the arithmetic structure of $\Gamma$, which is not available when $\Gamma$ is non-arithmetic. Nevertheless, the reduction theory of Garland-Raghunathan \cite{Garland_Raghunathan_1970_Fundamental_domains_for_lattices_MR267041} allows us to establish the following theorem in the rank one case. The proof of this theorem  will be given in Section \ref{Section: nonarithmetic quotient}.

\begin{theorem}\label{Theorem: nonarithmetic case}
Let $\boldsymbol{G}$ be a connected semisimple algebraic group defined over $\mathbb{Q}$ with $\mathrm{rank}_{\mathbb{R}}(\boldsymbol{G})=1$, $\boldsymbol{H}$ be an $\mathbb{R}$-algebraic subgroup of $\boldsymbol{G}$, and $\Gamma$ be a lattice of $G$. Assume that $G/\Gamma$ is noncompact.
Then the action of $H$ on $G/\Gamma$ is uniformly non-divergent if and only if $\boldsymbol{H}$ contains a maximal $\mathbb{R}$-split torus of $\boldsymbol{G}$.
\end{theorem}

Theorem \ref{Theorem: nonarithmetic case} allows us to give an alternative proof of the following 'compact core' lemma \cite[Lemma 5.13]{Fisher_Lafont_Miller_Stover_2018_Finiteness_of_maximal_geodesic}, which plays an essential role in the analysis of dynamics in noncompact rank one locally symmetric spaces \cite{Bader_Fisher_Miller_Stover_2021_Arithmeticity_MR4250391,Fisher_Lafont_Miller_Stover_2018_Finiteness_of_maximal_geodesic}.

\begin{corollary}\label{Corollary: compact core}
 Given $1<m\leq n$. Let $\boldsymbol{G}=\boldsymbol{SO}(n,1)$ and $\boldsymbol{H}=\boldsymbol{SO}(m,1)\leq \boldsymbol{G}$. Let $\Gamma$ be a lattice in $G$. Then there exists a compact subset $C\subset G/\Gamma$ such that for any $x\in G/\Gamma$, $Hx\cap C\neq \emptyset$.
\end{corollary}

\begin{proof}
    By assumption, both $\boldsymbol{G}$ and $\boldsymbol{H}$ are of rank 1. In particular, $\boldsymbol{H}$ contains a maximal $\mathbb{R}$-split torus of $\boldsymbol{G}$. By Theorem \ref{Theorem: nonarithmetic case}, the action of $H$ on $G/\Gamma$ is uniformly non-divergent.
\end{proof}

\subsection{Proof of Theorem \ref{Theorem: main theorem when A is algebraic} assuming Theorem \ref{Theorem: main theorem}}

\begin{proof}[(i)$\implies$(ii)]
Conjugating by some $g\in G$, we assume that $\boldsymbol{D}\subset \boldsymbol{T}$. 

If the action of $H$ on $G/\Gamma$ is not uniformly non-divergent, then by Theorem \ref{Theorem: main theorem}, there exist $w\in \mathrm{W}(G)$, $w^{\prime}\in \mathrm{W}(\mathrm{Z}_G(M))$, and a nonempty subset $I\subset \{1,\cdots, r\}$ such that $w^{-1}\boldsymbol{M} w\subset  \bigcap_{i\in I} \boldsymbol{P}_{i} $, and  $\{w^{\prime}w(\chi_{i}):i\in I\}$ are linearly dependent as linear functionals on $\mathrm{Lie}(A)$. Since $\boldsymbol{A}$ is $\mathbb{R}$-algebraic and $A=\boldsymbol{A}(\mathbb{R})$, by Corollary \ref{Corollary: linear dependency in algebraic torus}, $\{w^{\prime}w(\chi_{i}):i\in I\}$ are linearly dependent as (algebraic) characters on $\boldsymbol{A}$.

Let
\begin{equation*}
    \boldsymbol{H}':= w^{-1}w'^{-1}\boldsymbol{H}w' w,\;
    \boldsymbol{A}':= w^{-1}w'^{-1}\boldsymbol{A}w' w,\;
    \boldsymbol{M}':= w^{-1}w'^{-1}\boldsymbol{M}w' w.
\end{equation*}
Also let $\boldsymbol{D}':=w^{-1}w'^{-1}\boldsymbol{D}w' w = w^{-1}\boldsymbol{D} w$, which is a maximal $\R$-split torus in $\boldsymbol{\mathrm{Z}}_{\boldsymbol{G}}(\boldsymbol{M}')$.
Then $\boldsymbol{M}' \subset \boldsymbol{P}_i$
 for every $i\in I$,
 $\boldsymbol{A}' \subset \boldsymbol{D}' \subset \boldsymbol{T}$,
 and that $\{\chi_i\}_{i\in I}$ are linearly dependent as characters on $\boldsymbol{A}'$. Since each $\chi_i$ is trivial restricted to the semisimple $\boldsymbol{M}'$, we have that $\{ \chi_i\}_{i\in I}$ are linearly dependent as characters on $\boldsymbol{H}'$. So we are done.
\end{proof}

\begin{proof}[(ii)$\implies$(iii)]
Replacing $g^{-1}\boldsymbol{H}g$ by $\boldsymbol{H}$, we assume that $g=\mathrm{id}$ in $(iii)$.
Write $\boldsymbol{P}_I= \boldsymbol{L}_I \ltimes \boldsymbol{U}_I$, where $\boldsymbol{L}_I $ is a Levi group defined over $\mathbb{Q}$ containing $\boldsymbol{T}$, and $\boldsymbol{U}_I$ is the unipotent radical of $\boldsymbol{P}_I$.
Then there exists $u\in U_I$ such that $u\boldsymbol{H}u^{-1}\subset \boldsymbol{L}_I$.
Replacing $\boldsymbol{H}$ by $u\boldsymbol{H}u^{-1}$, we may assume that $\boldsymbol{H}$ is contained in $\boldsymbol{L}_I$.

By assumption there are integers $\{l_i\}_{i\in I}$ such that $\prod_{i\in I} \chi_i^{l_i} = 1$ when restricted to $\boldsymbol{H}$. Thus 
\begin{equation*}
    \boldsymbol{H} \subset \boldsymbol{L}
    :=\left\{
    l\in  \boldsymbol{L}_I \,\middle\vert\,
    \prod_{i\in I} \chi_i^{l_i}(l)=1
    \right\}^{\circ}.
\end{equation*}
Note that $\boldsymbol{L}$ is a connected reductive $\Q$-subgroup. Hence it suffices to prove that $\boldsymbol{\mathrm{Z}}_{\boldsymbol{G}}(\boldsymbol{L})/\boldsymbol{\mathrm{Z}}(\boldsymbol{L})$ is not $\Q$-anisotropic, which holds if there is a $\Q$-cocharacter whose image centralizes $\boldsymbol{L}$ and yet is not contained in $\boldsymbol{L}$.

Indeed, $\{\chi_i\}_{i\in I}$ are linearly independent when restricted to $\boldsymbol{\mathrm{Z}}^{\text{spl}}(\boldsymbol{L}_I)$, the $\Q$-split part of the central torus of $\boldsymbol{L}_I$. Hence there exists a cocharacter $\delta: \mathbb{G}_m \to \boldsymbol{\mathrm{Z}}^{\text{spl}}(\boldsymbol{L}_I)$, which is automatically defined over $\Q$, such that 
\begin{equation*}
    \prod_{i\in I} \chi_i^{l_i}\circ \delta \neq 1.
\end{equation*}
Thus the image of $\delta$ centralizes $\boldsymbol{L}$ and is not contained in $\boldsymbol{L}$. So we are done.

Note that in the case where $\boldsymbol{A}=\boldsymbol{D}$, and so $\boldsymbol{H}=\boldsymbol{D}\boldsymbol{M}$, if there exist $w\in \mathrm{W}(G)$, and a nonempty subset $I\subset \{1,\cdots,r\}$ such that $w^{-1}\boldsymbol{H} w\subset \bigcup_{i\in I}\boldsymbol{P}_i$, then $\{w(\chi_i):i\in I\}$ are linearly independent as characters on $\boldsymbol{D}$. Otherwise, it would contradicts the the fact that $\boldsymbol{Z_G(H)}/\boldsymbol{Z(H)}$ is $\mathbb{R}$-anisotropic.

\end{proof}

\begin{proof}[(iii)$\implies$(iv)]
By assumption, we can find a $\Q$-cocharacter $\delta: \mathbb{G}_m \to \boldsymbol{G}$ whose image centralizes $\boldsymbol{L}$, yet is not contained in $\boldsymbol{L}$.
Let $U$ be the horospherical $\Q$-subgroup defined by this cocharacter and let $v$ be a $\mathbb{Q}-$vector in $\wedge^{\text{dim} U} \mathfrak{g}$ representing the Lie algebra of $U$. Then $v$ is a vector satisfying the conclusion.
\end{proof}

\begin{proof}[(iv)$\implies$(i)]
Replacing $g^{-1}\boldsymbol{H}g$ by $\boldsymbol{H}$, we assume that $g=\mathrm{id}$ in $(v)$. Since $0\in \overline{\rho(G)\cdot v}$, by \cite[Corollary 3.5, Theorem 4.2]{Kempf_1978_Instability_in_invariant_theory_MR506989}, we can find a $\mathbb{Q}$-cocharacter $\delta:\mathbb{G}_m\to \boldsymbol{G}$ such that $\rho(\delta(t))\cdot v\to 0$ as $t\to \infty$, and the image of $\delta$ centralizes $\boldsymbol{H}$. This implies that $(i)$ holds.
\end{proof}

\subsection{Examples}
\begin{example}
Let $\boldsymbol{G}$ be a semisimple algebraic group defined over $\mathbb{Q}$ satisfying $rank_{\mathbb{Q}}(\boldsymbol{G})=rank_{\mathbb{R}}(\boldsymbol{G})=r\geq 1$, and $\Gamma=\boldsymbol{G}(\mathbb{Z})$. Then $G/\Gamma$ is not compact (see e.g. \cite{Borel_HarishChandra_1962_Arithmetic_subgroups_of_algebraic_groups_MR147566}).

Let $\boldsymbol{T}$ be a maximal $\mathbb{R}$-split torus of $\boldsymbol{G}$, and $A\subset T$ be an $\mathbb{R}$-diagonalizable subgroup (not necessarily algebraic). If $A$ is a proper subgroup of $T$, then the set of all fundamental weights $\{\chi_1,\cdots,\chi_r\}$ are linearly dependent as linear functionals on $Lie(A)$, since $\dim A<r$. 

Therefore, by Theorem \ref{Theorem: main theorem}, we conclude that when $rank_{\mathbb{Q}}(\boldsymbol{G})=rank_{\mathbb{R}}(\boldsymbol{G})\geq 1$, the action of $A$ on $G/\Gamma$ is uniformly non-divergent if and only if $A=T$.
\end{example}

\begin{example}
Let $\mathbb{K}$ be a totally real field extension of $\mathbb{Q}$ with $[\mathbb{K}:\mathbb{Q}]=m$. Let $\boldsymbol{G}=\mathrm{Res}_{\mathbb{K}/\mathbb{Q}}(\boldsymbol{\mathrm{SL}}_n)$, where $\mathrm{Res}$ denotes Weil's restriction of scalar operator (see e.g. \cite[Chapter 2]{Platonov_Rapinchuk_1994_Algebraic_groups_and_number_theory_MR1278263}). Denote $G=\boldsymbol{G}(\mathbb{R})$, $\Gamma=\boldsymbol{G}(\mathbb{Z})$, and
\begin{align*}
    A=\{\mathrm{diag}(e^{t_1},\cdots,e^{t_n}):\sum_{i=1}^n t_i=0\}\subset \mathrm{SL}_n(\mathbb{R}).
\end{align*}
Let $\Delta:\mathrm{SL}_n(\mathbb{R})\to G$ be the diagonal embedding. Then the identity component of the real points of a maximal $\mathbb{Q}$-split torus $\boldsymbol{S}$ of $\boldsymbol{G}$ is $S=\Delta(A)$. Let $W$ be the Weyl group of $\mathrm{SL}_n(\mathbb{R})$ defined by $W\cong N_{\mathrm{SL}_n(\mathbb{R})}(A)/Z_{\mathrm{SL}_n(\mathbb{R})}(A)\cong S_n$, where $S_n$ is the usual symmetric group. Then the Weyl group $\mathrm{W}(G)=W\times\cdots \times W$, that is, $\mathrm{W}(G)$ is the product of $m$ copies of $W$.

The maximal $\Q$-torus $\boldsymbol{T}$ containing $\boldsymbol{S}$ can be decomposed uniquely into its $\Q$-anisotropic part and its $\Q$-split part $\boldsymbol{S}$.
Thus we have a projection from $\mathrm{Lie}(T)$ to $\mathrm{Lie}(S)$.
Note that $\mathbb{Q}$-fundamental weights $\{\chi_1,...,\chi_r\}$ on $\mathrm{Lie}(T)$ factor through its projection to $\mathrm{Lie}(S)$. Thus, if for some $w \in \mathrm{W}(G)$, the projection from $\mathrm{Ad}(w)\mathrm{Lie}(S)$ to $\mathrm{Lie}(S)$ is trivial, then 
$\{w(\chi_1),...,w(\chi_r)\}$ becomes trivial, hence linearly dependent, on $\mathrm{Lie}(S)$.
On the other hand, if the projection is surjective, then $\{w(\chi_1),...,w(\chi_r)\}$ is linearly independent on $\mathrm{Lie}(S)$.

Assume that $n=2$, and hence $\mathrm{rank}_{\mathbb{Q}}(\boldsymbol{G})=1$. It is easy to verify that when $m$ is even, there exists $w\in \mathrm{W}(G)$ such that $\mathrm{Ad}(w) \mathrm{Lie}(S)$ projects trivially to $\mathrm{Lie}(S)$, where the projection is with respect to the Killing form on $\mathrm{Lie}(T)$. Also, when $m$ is odd, one can verify that for any $w\in \mathrm{W}(G)$, $\mathrm{Ad}(w)\mathrm{Lie}(S)$ projects onto $\mathrm{Lie}(S)$.  By Corollary \ref{coro: torus orbits}, we conclude that the action of $S$ on $G/\Gamma$ is uniformly non-divergent if and only if $m$ is an odd number (cf. \cite{Tomanov_2013_Locally_divergent_orbits_on_Hilbert_modular_sapces_MR3038365,Tomanov_2021_Closures_of_locally_divergent_orbits_of_maximal_tori_and_values_of_homogeneous_forms_MR4308167} for a study on different problems in the similar setting).

When $n\geq 3$ and $m\geq 2$, one can always find $w\in \mathrm{W}(G)$ such that $\{w(\chi_1),...,w(\chi_r)\}$ are linearly dependent on $\mathrm{Lie}(S)$. 
We conclude that when $n\geq 3$, the $S$ action on $G/\Gamma$ is uniformly non-divergent iff $m=1$, or, $\mathbb{K}=\mathbb{Q}$.

For instance, when $n=3$ and $m=2$, the projection $\mathrm{Lie}(T)\to \mathrm{Lie}(S)$ can be written as
\[
\left(
\left[\begin{array}{ccc}
   t_1  & & \\
     & t_2& \\
     &&  -t_1-t_2
\end{array}\right],\,
\left[\begin{array}{ccc}
   s_1  & & \\
     & s_2& \\
     && -s_1-s_2
\end{array}\right]\right)
\]
mapped to the diagonal embedding of
\[
\left[\begin{array}{ccc}
   (t_1+s_1)/2  & & \\
     & (t_2+s_2)/2& \\
     && (-t_1-t_2-s_1-s_2)/2
\end{array}\right].
\]

 One can check that there does not exist $w\in \mathrm{W}(G)$ such that this projection  becomes trivial on $\mathrm{Ad}(w)\mathrm{Lie}(S)$.

However, if $w$ denotes the Weyl element which fixes the first coordinate, but swaps the first and the last entry in the second coordinate, then the two fundamental weights become, after applying $w$, linearly dependent on $\mathrm{Lie}(S)$. So we can still conclude that the action is not uniformly non-divergent by Corollary \ref{coro: torus orbits}.
\end{example}

\begin{example}
Let $\mathbb{K}$ be a totally real field extension of $\mathbb{Q}$ with $[\mathbb{K}:\mathbb{Q}]=2$. Let $\boldsymbol{G}=\mathrm{Res}_{\mathbb{K}/\mathbb{Q}}(\boldsymbol{\mathrm{SL}}_4)$. Let $\Gamma=\boldsymbol{G}(\mathbb{Z})$. Then $G=\mathrm{SL}_4(\mathbb{R})\times \mathrm{SL}_4(\mathbb{R})$. Consider the semisimple $\mathbb{R}$-algebraic group $\boldsymbol{M}\subset \boldsymbol{G}$ defined by 
\begin{align*}
    \boldsymbol{M}=\begin{bmatrix}  
    1 &  \\
     & \boldsymbol{\mathrm{SO}(2,1)}
    \end{bmatrix} \times
    \begin{bmatrix}
    1 & & & \\
    & 1 & &\\
    & & 1 & \\
    & & & 1
    \end{bmatrix},
\end{align*}
where $\boldsymbol{\mathrm{SO}(2,1)}$ is viewed as a subgroup of $\boldsymbol{\mathrm{SL}}_3$, which is embedded in the lower right block in $\boldsymbol{\mathrm{SL}}_4$. Note that a maximal $\mathbb{R}$-split torus $\boldsymbol{T}$ of $\boldsymbol{G}$ is the product of two maximal $\mathbb{R}$-split tori $\boldsymbol{T}'$ of $\boldsymbol{\mathrm{SL}}_4$, which are full diagonal tori. So $\mathrm{Lie}(T)=\{(x_1,x_2):x_i\in \mathrm{Lie}(T')\}$. 

A maximal $\mathbb{R}$-split torus $\boldsymbol{D}$ of $\boldsymbol{\mathrm{Z}_G(M)}$ has real points 
\begin{align*}
   D^{\circ}=\left\{\begin{bmatrix}
    e^{3t} & & &\\
    & e^{-t} & &\\
    & & e^{-t} &\\
    & & & e^{-t}
    \end{bmatrix}: t\in \mathbb{R}
    \right\}\times (T')^{\circ}.
\end{align*}
Let $\{\lambda_1,\lambda_2,\lambda_3\}$ be the set of all fundamental $\mathbb{Q}$-weights in $\boldsymbol{\mathrm{SL}}_4$ with respect to the full diagonal subgroup of $\boldsymbol{\mathrm{SL}}_4$, then $\{\chi_1,\chi_2,\chi_3\}$ is the set of all fundamental $\mathbb{Q}$-weights in $\boldsymbol{G}$, where for any $v=(x_1,x_2)\in\mathrm{Lie}(T)$,
\begin{align*}
    \chi_i(v)=\lambda_i(x_1)+\lambda_i(x_2).
\end{align*}
For $1\leq i\leq 3$, let $v_i\in\mathrm{Lie}(T)$ be such that $\chi_i(v)=(v_i|v)$ for any $v\in\mathrm{Lie}(T)$, where $(\cdot|\cdot)$ is the Killing form on $\mathrm{Lie}(T)$.
Denote by $\mathrm{W}(G)\cong \mathrm{N}_G(T)/\mathrm{Z}_G(T)$ and $\mathrm{W}(\mathrm{Z}_G(M))\cong \mathrm{N}_{\mathrm{Z}_G(M)}(D)/\mathrm{Z}_{\mathrm{Z}_G(M)}(D)$. For any $w\in \mathrm{W}(G)$, $w'\in \mathrm{W}(\mathrm{Z}_G(M))$ (implicitly we are fixing a choice of representatives of these Weyl groups with the understanding that the choice would not affect the discussion below), and nonempty $I\subset\{1,2,3\}$, define a linear subspace
\begin{align*}
    U(w,w',I):=\mathrm{Span}_{\mathbb{R}}\{w'w(v_i):i\in I\}\subset\mathrm{Lie}(T).
\end{align*}
Let 
\begin{align*}
    B:=\{U(w,w',I): w^{-1}\boldsymbol{M}w\subset \boldsymbol{P}_I, w'\in W(Z_G(M))\},
\end{align*}
then $B$ is a finite collection of linear subspaces of $\mathrm{Lie}(T)$.
For a linear subspace $V\subset \mathrm{Lie}(D)$ satisfying $\pi_{U(w,w',I)}(V)=U(w,w',I)$ for any $U(w,w',I)\in B$, let $A=\exp(V)\subset D$ (Since $\dim D=4$ and there are at most three linear functionals in $U(w,w',I)$, generic $3$-dimensional subspaces $V$ would meet this requirement). Then by Theorem \ref{Theorem: main theorem} and Corollary \ref{Corollary: If linear functionals are linearly independent, then ...}, the action of $H=AM$ is uniformly non-divergent on $G/\Gamma$. We also note that one can choose $V\subset \mathrm{Lie}(D)$ such that $A=\exp(V)$ is nonalgebraic.
\end{example}

\subsection{Overview of the proof strategy}
Here we indicate the strategy showing that statement (1) and (2) are equivalent in Theorem \ref{Theorem: main theorem}.

First we assume (2) does not hold. Let $H$ be as in Theorem \ref{Theorem: main theorem}. Denote $\pi:G \to G/\Gamma$. Note that $G/\Gamma=\bigcup_{\eta>0} X_{\eta}$ such that $\{X_{\eta}:\eta>0\}$ is a nested collection of compact sets, and for any compact set $C\subset G/\Gamma$, there exists $\eta_C>0$ with $C\subset X_{\eta_C}$. We want to prove that every $H$-orbit intersects a fixed compact set $X_{\eta_0}$ for some suitably chosen $\eta_0>0$. 

Note that $H=AM$, where $M$ is a semisimple group and $A$ is an $\mathbb{R}$-diagonalizable subgroup of $Z_G(M)$. Given $g\in G$, for any $h\in H$, as $h\pi(g)=am\pi(g)$, it is important to understand the structure of the set of elements $g\in G$ such that $M$ fails to bring $\pi(g)$ back to $K$.

For any $\eta>0$, consider the subset
\[G^M_{\eta}:=\{g\in G: M\pi(g)\cap X_{\eta}\neq \emptyset\}.\]
Then $G/\Gamma- \pi(G^M_{\eta})$ is the collection of elements in $G/\Gamma$ which can not be brought back by $M$ to the compact set $X_{\eta}$.

By the work of Daw-Gorodnik-Ullmo-Li \cite{Daw_Gorodnik_Ullmo_2021_The_space_of_homogeneous_probability_measures_on_is_compact}, $G/\Gamma-\pi(G_{\eta}^M)=\bigcup_{\boldsymbol{P}} \Sigma^M_{\eta,\boldsymbol{P}}$, where the union is taken over all $\mathbb{Q}$-parabolic subgroup $\boldsymbol{P}$ satisfying certain conditions determined by $M$, and $\Sigma^M_{\eta,\boldsymbol{P}}$ is certain generalized Siegel set (for a precise description, see the paragraph above Proposition \ref{Proposition: a description of cusps when taking consideration of M}). We wish to bring back any given element in $G/\Gamma-\pi(G_{\eta}^M)$ using subgroup $A$. 

It turns out that the structure of $\bigcup_{\boldsymbol{P}} \Sigma^M_{\eta,\boldsymbol{P}}$ leads to an open cover of $A$, thus an open cover on $\mathrm{Lie}(A)\cong \mathbb{R}^n$. However, this open cover is not good enough for us to apply a topological covering theorem of Euclidean spaces (Theorem \ref{Theorem: a covering theorem}), which essentially says that one can not cover an Euclidean space using a family of open sets with "low multiplicity". Thanks to a compactness criterion of Tomanov-Weiss (Proposition \ref{Proposition: compactness criterion}) and the fundamental result of Dani-Margulis on quantitative nondivergence of unipotent orbits (Theorem \ref{Theorem: quantitative nondivergence of unipotent orbits}), we are able to construct a good cover of $\mathrm{Lie}(A)$ (Lemma \ref{Lemma: a covering for torus}) to which the topological covering theorem applies. To finish the argument, we assume that statement (1) holds and prove by contradiction. For suitably chosen $\eta_0>0$, assume that there exists $x\in G/\Gamma-\pi(G_{\eta_0}^M)$ such that $Ax\cap X_{\eta_0}=\emptyset$. By some linear algebra argument combined with the covering theorem, this will leads to a contradiction.

To see that statement (2) implies (1), we first find a suitable Cartan involution (see Proposition \ref{Proposition: consequence of M contained in parabolic subgroup}) and again make use of some simple linear algebra argument (see Section \ref{Section: Some linear algebra lemmas}) to finish the proof. This part of proof is inspired by \cite[Example 1]{Tomanov_Weiss_2003_Closed_orbits_for_actions_of_maximal_tori_on_homogeneous_spaces_MR1997950}.

\subsection{Outline of the paper}
The rest of this article will be mainly devoted to proving Theorem \ref{Theorem: main theorem}. In Section \ref{Section: Preliminaries}, we will recall some basic notions and theorems from algebraic groups. In Section \ref{Section: Covering theorems}, we study properties of certain topological cover of the group ${G}$ and the $\mathbb{R}$-diagonalizable group $A$, which is constructed using the work in \cite{Daw_Gorodnik_Ullmo_2021_Convergence_of_measures_on_compactifications_of_locally_symmetric_spaces_MR4229603,Daw_Gorodnik_Ullmo_2021_The_space_of_homogeneous_probability_measures_on_is_compact}. Certain good properties of the cover is ensured by the fundamental result of Dani-Margulis on quantitative nondivergence of unipotent orbits \cite{Dani_Margulis_1991_Asymptotic_behaviour_of_trajectories_of_unipotent_flows_on_homogeneous_spaces_MR1101994}. In Section \ref{Section: Some linear algebra lemmas}, we prove some simple but useful linear algebra lemmas, which enable us to deal with the situation when $A$ is not necessarily algebraic. In Section \ref{Section: Proof of the main theorem}, we finish the proof of Theorem \ref{Theorem: main theorem}. There we make use of a topological covering theorem, which is initially introduced by McMullen in \cite{McMullen_2005_Minkowski's_conjecture_well_rounded_lattices_and_topological_dimension_MR2138142}, and later developed by Solan and Tamam \cite{Solan_2019_Stable_and_well_rounded_lattices_in_diagonal_orbits_MR4040835,Solan_Tamam_2022_On_topologically_big_divergent_trajectories}. In Section \ref{Section: nonarithmetic quotient}, we recall the reduction theory for real rank one quotient by Garland-Raghunathan, and give the proof of Theorem \ref{Theorem: nonarithmetic case}.

\section{Preliminaries}\label{Section: Preliminaries}

Recall that $\boldsymbol{G}$ is a linear algebraic semisimple group defined over $\mathbb{Q}$ and $\mathfrak{g}$ is the Lie algebra of its real points $G$. We also fix a norm $\norm{\cdot}$ on $\mathfrak{g}$. 
A Lie subalgebra of $\mathfrak{g}$ is said to be unipotent iff it corresponds to a (Zariski closed) unipotent subgroup of $\boldsymbol{G}$. 

Let $\Gamma$ be an arithmetic subgroup of $G$. Let $\mathrm{Ad}:G\to \mathrm{GL}(\mathfrak{g})$ be the adjoint representation of $G$ on its Lie algebra. By \cite{Borel_1966_Density_and_maximality_of_arithmetic_subgroups_MR205999}, we find a lattice $\mathfrak{g}_{\mathbb{Z}}$ of $\mathfrak{g}$ such that $\mathrm{Ad}(\Gamma)\mathfrak{g}_{\mathbb{Z}}=\mathfrak{g}_{\mathbb{Z}}$.
Let $\pi:G\to G/\Gamma$ be the natural projection map. For any $x=\pi(g)\in G/\Gamma$, denote $\mathfrak{g}_x=\mathfrak{g}_g=\mathrm{Ad}(g)\mathfrak{g}_{\mathbb{Z}}$. 
\subsection{Parabolic subgroups}
 Recall that $\boldsymbol{T}$ is a maximal $\mathbb{R}$-split torus of $\boldsymbol{G}$ containing a maximal $\mathbb{Q}$-split torus $\boldsymbol{S}$ and $r=\dim \boldsymbol{S}$ is the $\mathbb{Q}$-rank of $\boldsymbol{G}$. 
By \cite[21.8]{Borel_1991_Linear_algebraic_groups_MR1102012}, we can choose compatible orderings of $\mathbb{R}$-root system $\Phi_{\mathbb{R}}$ and $\mathbb{Q}$-root system $\Phi_{\mathbb{Q}}$. According to these orderings, we fix a $\mathbb{Q}$-minimal parabolic subgroup $\boldsymbol{P}_0$ containing $\boldsymbol{T}$. Let $\Delta_{\mathbb{Q}}$ be the set of all simple $\mathbb{Q}$-roots of $\boldsymbol{G}$. By \cite{Borel_1969_Introduction_aux_groupes_arithmetiques_MR0244260}, for some $t \in \R$, 
\begin{align}\label{align: Sigel set}
    G=K\cdot S_t\cdot C\cdot F^{-1}\cdot \Gamma,
\end{align}
where $K$ is a maximal compact subgroup, $C$ is a compact subset of $G$, $F\subset G(\mathbb{Q})$ is a finite subset, and
\begin{align*}
    S_t :=\{s\in S^{\circ}: \alpha(s)\leq t, \forall \alpha\in \Delta_{\mathbb{Q}}\}.
\end{align*}
For any subset $I\subset \Delta_{\mathbb{Q}}=:\{\alpha_1,\cdots,\alpha_r\}$, consider the standard parabolic $\mathbb{Q}$-subgroup $\boldsymbol{P}_I=\boldsymbol{\mathrm{Z}_G({S}_I)}\cdot \boldsymbol{N}$, where 
\begin{align*}
    \boldsymbol{S}_I :=(\bigcap_{\alpha\in \Delta_{\mathbb{Q}}\setminus I} \mathrm{Ker} ({\alpha}) )^{\circ},
\end{align*}
and $\boldsymbol{N}$ is a maximal unipotent subgroup contained in $\boldsymbol{P}_0$. In particular, $\boldsymbol{P}_0=\boldsymbol{P}_{\Delta_{\mathbb{Q}}}$. Define the finite collection
\begin{align}\label{Align: finite collection of parabolic subgroups}
    \mathcal{B}:=\{\lambda \boldsymbol{P}_I \lambda^{-1}: I\subset \Delta_{\mathbb{Q}},\lambda\in F\},
\end{align}
Then any parabolic $\mathbb{Q}$-subgroup $\boldsymbol{P}$ is conjugate to an element in $\mathcal{B}$ by some $\gamma\in \Gamma$ (see e.g. \cite{Dani_Margulis_1991_Asymptotic_behaviour_of_trajectories_of_unipotent_flows_on_homogeneous_spaces_MR1101994}).

For each $\alpha\in \Delta_{\mathbb{Q}}$, define a projection $\pi_{\alpha}:\Phi_{\mathbb{Q}}\to \mathbb{Z}$ by $\pi_{\alpha}(\chi)=n_{\alpha}$, where $\chi=\sum_{\beta\in \Delta_{\mathbb{Q}}}n_{\beta}\beta$.
Denote by $\mathfrak{g}_{\chi}$ the root space corresponding to $\chi\in \Phi_{\mathbb{Q}}$. Then by \cite[21.12]{Borel_1991_Linear_algebraic_groups_MR1102012},
\begin{align}\label{Align: Lie algebra decomposition of parabolic 1}
   \mathrm{Lie}(\mathrm{Rad}_{\mathrm{U}}(\boldsymbol{P}_I))=\bigoplus_{\exists \alpha\in I,\pi_{\alpha}(\chi)>0}\mathfrak{g}_{\chi},
\end{align}
and 
\begin{align}\label{Align: Lie algebra decomposition of parabolic 2}
   \mathrm{Lie}(\mathrm{Z}_G({S}_I))= \mathrm{Lie} (\mathrm{Z}_G({S}))\oplus \bigoplus_{\forall \alpha\in I,\pi_{\alpha}(\chi)=0}\mathfrak{g}_{\chi},
\end{align}
where $\mathrm{Rad}_{\mathrm{U}}(\boldsymbol{P}_I)$ is the unipotent radical of $\boldsymbol{P}_I$.

For each $i=1,\cdots,r$, let $\boldsymbol{P}_i=\boldsymbol{P}_{\{\alpha_i\}}$. Then $\boldsymbol{P}_1,\cdots,\boldsymbol{P}_r$ are standard maximal parabolic $\mathbb{Q}$-subgroups of $\boldsymbol{G}$ containing $\boldsymbol{P}_0$. Let $\mathfrak{u}_1,\cdots,\mathfrak{u}_r$ be the Lie algebra of the unipotent radical of $\boldsymbol{P}_1,\cdots,\boldsymbol{P}_r$, respectively. For each $j=1,\cdots,r$, let $\mathfrak{R}_j$ be the set of all the Lie algebras of unipotent radicals of parabolic $\mathbb{Q}$-subgroups $\boldsymbol{P}$ which are conjugated to $\boldsymbol{P}_j$. Let $\mathfrak{R}=\bigcup_{j=1}^r \mathfrak{R}_j$.

The following propositions are needed in the course of establishing our main theorems:
\begin{proposition}\label{Proposition: existence of Zassenhauss neighborhood}\cite[Proposition 3.3]{Tomanov_Weiss_2003_Closed_orbits_for_actions_of_maximal_tori_on_homogeneous_spaces_MR1997950}
There exists an open neighborhood $W_0$ of $0$ in $\mathfrak{g}$ such that for any $g\in G$, the Lie subalgebra generated by $\mathrm{Span}_{\mathbb{R}}(\mathfrak{g}_{g}\cap W_0)$ is unipotent.
\end{proposition}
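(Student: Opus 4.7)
The plan is to combine an integrality argument on the $\mathbb{Q}$-structure $\mathfrak{g}_{\mathbb{Z}}$ with a structural analysis of the lattice $\mathfrak{g}_g$ based on the Cartan decomposition of $g$. The first ingredient forces every individual small lattice vector to be a nilpotent element of $\mathfrak{g}$; the second shows that the Lie subalgebra generated by these small vectors is contained in the nilradical of a $\mathbb{Q}$-parabolic subgroup.

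For the first ingredient, I would fix a $\mathbb{Z}$-basis of $\mathfrak{g}_{\mathbb{Z}}$ and a positive integer $N$ such that the Lie bracket structure constants in this basis lie in $N^{-1}\mathbb{Z}$. For any $Y \in \mathfrak{g}_{\mathbb{Z}}$, the matrix of $\mathrm{ad}(Y)$ in this basis has entries in $N^{-1}\mathbb{Z}$, so the non-leading coefficients of its characteristic polynomial lie in $N^{-d}\mathbb{Z}$ with $d = \dim \mathfrak{g}$. For $X = \mathrm{Ad}(g) Y \in \mathfrak{g}_g$, the operator $\mathrm{ad}(X)$ is conjugate to $\mathrm{ad}(Y)$ and hence has the same characteristic polynomial. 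Taking $W_0$ to be the ball of sufficiently small radius so that each non-leading coefficient, bounded by $\binom{d}{k}\|\mathrm{ad}(X)\|^k$, is strictly smaller than $N^{-d}$ in absolute value, the integrality constraint $N^d c_k \in \mathbb{Z}$ forces $c_k = 0$. Therefore $\mathrm{ad}(X)$ is nilpotent and $X$ is a nilpotent element of $\mathfrak{g}$.

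For the second ingredient, I would use the Cartan decomposition $g = k_1 a k_2$ with $a$ in a maximal $\mathbb{R}$-split torus of $G$. Writing $\mathfrak{g} = \mathfrak{g}_0 \oplus \bigoplus_{\alpha} \mathfrak{g}_{\alpha}$ as a sum of $\mathbb{R}$-root spaces, $\mathrm{Ad}(a)$ acts on $\mathfrak{g}_{\alpha}$ by $\alpha(a)$, and the bounded effect of $k_1, k_2 \in K$ means short vectors of $\mathfrak{g}_g$ correspond (up to rotation) to root spaces with $|\alpha(a)|$ small. The set of roots $\alpha$ with $|\alpha(a)| < 1$ is contained in the set of roots of the unipotent radical of a parabolic subgroup, which is a unipotent Lie subalgebra. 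The Siegel-set decomposition cited earlier further ensures that, up to $\Gamma$-conjugation and a bounded compact error, only finitely many such parabolics arise. Hence $W_0$ can be chosen uniformly in $g$ so that $\mathrm{Span}_{\mathbb{R}}(\mathfrak{g}_g \cap W_0)$ is contained in the nilradical of a $\mathbb{Q}$-parabolic, and the Lie subalgebra it generates is unipotent.

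The main obstacle is the second ingredient: a linear combination of nilpotent elements in $\mathfrak{g}$ need not be nilpotent (as in $\mathfrak{sl}_2$ where $e+f$ has nonzero eigenvalues), so element-wise nilpotence is insufficient. The key is exploiting the precise structure of $\mathfrak{g}_g$ via Cartan and Siegel decompositions, which confines short vectors to a controlled nilradical, as carried out in the Tomanov--Weiss argument.
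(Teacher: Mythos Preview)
The paper does not prove this proposition; it is quoted verbatim from Tomanov--Weiss and used as a black box. So the comparison is with the Tomanov--Weiss argument.

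Your first ingredient is correct and is indeed part of the standard proof: integrality of the characteristic polynomial of $\mathrm{ad}(Y)$ for $Y\in\mathfrak{g}_{\mathbb{Z}}$ forces every sufficiently short vector of $\mathfrak{g}_g$ to be ad-nilpotent.

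Your second ingredient, however, has a genuine gap. The assertion that ``short vectors of $\mathfrak{g}_g$ correspond (up to rotation) to root spaces with $|\alpha(a)|$ small'' is not justified and is not true in the form you need. Writing $g=k_1ak_2$ and $X=\mathrm{Ad}(k_1)\mathrm{Ad}(a)Z$ with $Z=\mathrm{Ad}(k_2)Y$, the vector $Z$ can have nonzero components in root spaces where $e^{\alpha(\log a)}$ is large, provided those components of $Z$ happen to be small; nothing about the lattice $\mathfrak{g}_{\mathbb{Z}}$ prevents this after the rotation by $k_2$. So you cannot conclude that $X$ lies in (a rotate of) the nilradical of a parabolic, and the Siegel-set reduction does not rescue this, since the compact factor there plays the same obstructive role as $k_2$.

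The way Tomanov--Weiss close this gap does not use Cartan or Siegel decompositions at all. They exploit instead that $\mathfrak{g}_g$ is a \emph{lattice} closed (up to a fixed denominator) under Lie brackets. Thus integer combinations and iterated brackets of elements of $\mathfrak{g}_g\cap W_0$ remain in $\mathfrak{g}_g$ and, for $W_0$ small enough, remain short; by your first ingredient they are all ad-nilpotent. Zariski density of these integral points then forces every element of the generated subalgebra to be ad-nilpotent, and Engel's theorem gives nilpotency (equivalently: the classical Zassenhaus--Kazhdan--Margulis lemma says the discrete group generated by $\exp(\mathfrak{g}_g\cap W_0)\subset g\Gamma g^{-1}$ is nilpotent, and a nilpotent group with unipotent generators is unipotent). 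This is the missing mechanism; your root-space picture should be replaced by it.
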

The neighborhood $W_0$ in Proposition \ref{Proposition: existence of Zassenhauss neighborhood} is called a(n open) Zassenhaus neighborhood.

\begin{proposition}\label{Proposition: if the span is unipotent, then there exist standard parabolic}\cite[Proposition 5.3]{Tomanov_Weiss_2003_Closed_orbits_for_actions_of_maximal_tori_on_homogeneous_spaces_MR1997950}
Let $\mathfrak{v}_1,\cdots,\mathfrak{v}_k\in \mathfrak{R}$. Suppose that
the Lie subalgebra generated by $\mathrm{Span}_{\mathbb{R}}\{\mathfrak{v}_j:j=1,\cdots, k\}$ is unipotent in $\mathfrak{g}$, then there exists $g\in G$ and $\{i_1,\cdots,i_k\}\subset \{1,\cdots,r\}$ such that $\mathfrak{v}_j=\mathrm{Ad}(g)\mathfrak{u}_{i_j}, j=1,\cdots,k$.
\end{proposition}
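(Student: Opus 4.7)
The plan is to prove that the hypothesis forces the parabolic $\mathbb{Q}$-subgroups whose unipotent radicals have Lie algebras $\mathfrak{v}_j$ to share a common minimal parabolic $\mathbb{R}$-subgroup of $\boldsymbol{G}$, and to then obtain the single conjugating element $g$ by transporting this common minimal parabolic onto $\boldsymbol{P}_0$ and invoking uniqueness of standard parabolic subgroups of a given $\mathbb{R}$-type.

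First, for each $j$ I would fix the maximal parabolic $\mathbb{Q}$-subgroup $\boldsymbol{P}^{(j)}:=\mathrm{N}_{\boldsymbol{G}}(\exp(\mathfrak{v}_j))$, which satisfies $\mathrm{Lie}(\mathrm{Rad}_{\mathrm{U}}(\boldsymbol{P}^{(j)}))=\mathfrak{v}_j$; since $\mathfrak{v}_j\in\mathfrak{R}$, each $\boldsymbol{P}^{(j)}$ is $G$-conjugate to some standard maximal parabolic $\mathbb{Q}$-subgroup $\boldsymbol{P}_{i_j}$, fixing the candidate index $i_j\in\{1,\dots,r\}$. Since the Lie subalgebra $\mathfrak{u}$ generated by $\bigcup_j \mathfrak{v}_j$ is unipotent by hypothesis, it sits inside the Lie algebra of the unipotent radical $\boldsymbol{N}_{\mathbb{R}}$ of some minimal parabolic $\mathbb{R}$-subgroup $\boldsymbol{B}_{\mathbb{R}}$ of $\boldsymbol{G}$, so in particular every $\mathfrak{v}_j$ lies in $\mathrm{Lie}(\boldsymbol{N}_{\mathbb{R}})$.

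The crux---and the main obstacle---is the structural statement that whenever the Lie algebra of the unipotent radical of a parabolic subgroup $\boldsymbol{P}$ sits inside the Lie algebra of the unipotent radical of a parabolic subgroup $\boldsymbol{B}$ of $\boldsymbol{G}$, one has $\boldsymbol{B}\subset \boldsymbol{P}$. I would deduce this by enlarging $\boldsymbol{B}_{\mathbb{R}}$ to a complex Borel subgroup $\boldsymbol{B}_{\mathbb{C}}$ of $\boldsymbol{G}(\mathbb{C})$ whose unipotent radical contains $\boldsymbol{N}_{\mathbb{R}}$, applying Borel's fixed point theorem to the $\boldsymbol{B}_{\mathbb{C}}$-action on the partial flag variety of parabolics of the same type as $\boldsymbol{P}^{(j)}$ to get a fixed parabolic $\boldsymbol{P}'$ with $\boldsymbol{B}_{\mathbb{C}}\subset \boldsymbol{P}'$ (so $\mathrm{Rad}_{\mathrm{U}}(\boldsymbol{P}')\subset \mathrm{Rad}_{\mathrm{U}}(\boldsymbol{B}_{\mathbb{C}})$), and then using a Bruhat-cell / root-space comparison with respect to a maximal torus in $\boldsymbol{B}_{\mathbb{C}}$ to conclude that the unipotent radical of a parabolic of the given type contained in $\mathrm{Rad}_{\mathrm{U}}(\boldsymbol{B}_{\mathbb{C}})$ is unique. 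Hence $\mathrm{Rad}_{\mathrm{U}}(\boldsymbol{P}^{(j)})=\mathrm{Rad}_{\mathrm{U}}(\boldsymbol{P}')$, and the identity $\boldsymbol{P}^{(j)}=\mathrm{N}_{\boldsymbol{G}}(\mathrm{Rad}_{\mathrm{U}}(\boldsymbol{P}^{(j)}))$ forces $\boldsymbol{P}^{(j)}=\boldsymbol{P}'$. Descending to real points then gives $\boldsymbol{B}_{\mathbb{R}}\subset \boldsymbol{P}^{(j)}$ for every $j$.

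To conclude, observe that the $\mathbb{R}$-type of each $\boldsymbol{P}^{(j)}$ agrees, as a $G$-conjugation invariant, with that of $\boldsymbol{P}_{i_j}$, and the latter contains the $\mathbb{R}$-type of $\boldsymbol{P}_0$ because $\boldsymbol{P}_0\subset \boldsymbol{P}_{i_j}$. Hence the unique $\mathbb{R}$-parabolic $\boldsymbol{Q}$ of the same $\mathbb{R}$-type as $\boldsymbol{P}_0$ containing $\boldsymbol{B}_{\mathbb{R}}$ is contained in every $\boldsymbol{P}^{(j)}$. Choosing $g\in G$ with $g^{-1}\boldsymbol{Q}g=\boldsymbol{P}_0$ (possible since all $\mathbb{R}$-parabolics of a fixed type are $G$-conjugate), each $g^{-1}\boldsymbol{P}^{(j)}g$ is an $\mathbb{R}$-parabolic containing $\boldsymbol{P}_0$ and $G$-conjugate to $\boldsymbol{P}_{i_j}$; uniqueness of standard $\mathbb{R}$-parabolic subgroups of a given $\mathbb{R}$-type containing a fixed minimal $\mathbb{R}$-parabolic (picked inside $\boldsymbol{P}_0$) forces $g^{-1}\boldsymbol{P}^{(j)}g=\boldsymbol{P}_{i_j}$, which immediately gives $\mathfrak{v}_j=\mathrm{Ad}(g)\mathfrak{u}_{i_j}$, as required.
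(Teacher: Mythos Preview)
The paper does not prove this proposition at all: it is quoted verbatim from Tomanov--Weiss \cite[Proposition~5.3]{Tomanov_Weiss_2003_Closed_orbits_for_actions_of_maximal_tori_on_homogeneous_spaces_MR1997950} and used as a black box. So there is nothing in the present paper to compare your argument against.

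Your outline is in the right spirit and matches the standard way one proves this kind of statement: embed the unipotent algebra generated by the $\mathfrak{v}_j$ into the unipotent radical of a minimal $\mathbb{R}$-parabolic $\boldsymbol{B}_{\mathbb{R}}$, show $\boldsymbol{B}_{\mathbb{R}}\subset\boldsymbol{P}^{(j)}$ for every $j$, and then conjugate $\boldsymbol{B}_{\mathbb{R}}$ onto a fixed minimal parabolic to line everything up with the standard $\boldsymbol{P}_{i_j}$. The final bookkeeping with $\mathbb{R}$-types is fine.

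The one place that is genuinely thin is your ``crux'': the implication $\mathrm{Lie}(\mathrm{Rad}_{\mathrm{U}}\boldsymbol{P})\subset\mathrm{Lie}(\mathrm{Rad}_{\mathrm{U}}\boldsymbol{B})\Rightarrow\boldsymbol{B}\subset\boldsymbol{P}$. Your Borel-fixed-point step produces \emph{some} parabolic $\boldsymbol{P}'$ of the right type containing $\boldsymbol{B}_{\mathbb{C}}$, but the ``Bruhat-cell / root-space comparison'' you invoke to identify $\boldsymbol{P}'$ with $\boldsymbol{P}^{(j)}$ is doing real work and is not justified by what you wrote. A cleaner route: since any two parabolics share a common maximal torus, pick a maximal torus $\boldsymbol{T}'\subset\boldsymbol{B}\cap\boldsymbol{P}$; then $\mathfrak{u}_{\boldsymbol{P}}$ is $\boldsymbol{T}'$-stable, hence a sum of root spaces for $\boldsymbol{T}'$, all of them positive (for the ordering defined by $\boldsymbol{B}$) because $\mathfrak{u}_{\boldsymbol{P}}\subset\mathfrak{n}_{\boldsymbol{B}}$. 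Now $\boldsymbol{P}=\mathrm{N}_{\boldsymbol{G}}(\mathfrak{u}_{\boldsymbol{P}})$ is a $\boldsymbol{T}'$-standard parabolic for this positive system, and one checks directly from the root description of unipotent radicals of standard parabolics that $\mathfrak{n}_{\boldsymbol{B}}$ normalizes $\mathfrak{u}_{\boldsymbol{P}}$, giving $\boldsymbol{B}\subset\boldsymbol{P}$. With that gap closed, your argument goes through.
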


\begin{proposition}\label{Proposition: compactness criterion}\cite[Proposition 3.5]{Tomanov_Weiss_2003_Closed_orbits_for_actions_of_maximal_tori_on_homogeneous_spaces_MR1997950}
For any subset $L$ of $G$, $\pi(L)\subset G/\Gamma$ is precompact if and only if there exists a neighborhood $W$ of $0$ in $\mathfrak{g}$ such that for every $g\in L$ and every $\mathfrak{u}\in \mathfrak{R}$, $\mathrm{Ad}(g)\mathfrak{u}\not\subset \mathrm{Span}_{\mathbb{R}}(\mathfrak{g}_{g}\cap W)$.
\end{proposition}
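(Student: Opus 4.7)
The plan is to treat this as a refinement of Mahler's compactness criterion, taking advantage of the fact that the subspaces in $\mathfrak{R}$ are rational with respect to the $\Gamma$-stable lattice $\mathfrak{g}_{\mathbb{Z}}$ and split into finitely many $\Gamma$-orbits through the collection $\mathcal{B}$ in (\ref{Align: finite collection of parabolic subgroups}). The central device will be a Plücker-type encoding: for each $\mathfrak{u}\in \mathfrak{R}$, I fix a $\mathbb{Z}$-basis of $\mathfrak{u}\cap \mathfrak{g}_{\mathbb{Z}}$ and form $w_{\mathfrak{u}}\in \wedge^{\dim \mathfrak{u}}\mathfrak{g}_{\mathbb{Z}}$. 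Since $\mathrm{Ad}(\gamma)w_{\mathfrak{u}}=\pm w_{\mathrm{Ad}(\gamma)\mathfrak{u}}$, the family $\{w_{\mathfrak{u}}:\mathfrak{u}\in \mathfrak{R}\}$ is a union of finitely many $\Gamma$-orbits of nonzero integer vectors in appropriate exterior power representations of $\boldsymbol{G}$, and $\norm{\mathrm{Ad}(g)w_{\mathfrak{u}}}$ equals the covolume of $\mathrm{Ad}(g)(\mathfrak{u}\cap \mathfrak{g}_{\mathbb{Z}})$ in $\mathrm{Ad}(g)\mathfrak{u}$.

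For the forward direction, assume $\pi(L)$ is precompact. Applying Mahler's criterion in each relevant exterior power representation yields a uniform constant $c>0$ with $\norm{\mathrm{Ad}(g)w_{\mathfrak{u}}}\geq c$ for every $g\in L$ and $\mathfrak{u}\in \mathfrak{R}$. I then claim that shrinking $W$ forces $\norm{\mathrm{Ad}(g)w_{\mathfrak{u}}}$ to be small whenever $\mathrm{Ad}(g)\mathfrak{u}\subset U:=\mathrm{Span}_{\mathbb{R}}(\mathfrak{g}_g\cap W)$. By Minkowski's second theorem on successive minima, the lattice $\mathfrak{g}_g\cap U$ admits a basis of vectors whose norms are at most a constant multiple of $\mathrm{diam}(W)$, hence $\mathrm{covol}(\mathfrak{g}_g\cap U)\lesssim \mathrm{diam}(W)^{\dim U}$. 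Factoring $\mathrm{covol}(\mathfrak{g}_g\cap U)=\mathrm{covol}(\mathrm{Ad}(g)(\mathfrak{u}\cap\mathfrak{g}_{\mathbb{Z}}))\cdot \mathrm{covol}\bigl((\mathfrak{g}_g\cap U)/\mathrm{Ad}(g)(\mathfrak{u}\cap\mathfrak{g}_{\mathbb{Z}})\bigr)$ and bounding the quotient covolume below by rerunning the same wedge-product discreteness argument (this time for the projection onto a rational complement of $\mathfrak{u}$), one obtains $\norm{\mathrm{Ad}(g)w_{\mathfrak{u}}}\lesssim \mathrm{diam}(W)^{\dim \mathfrak{u}}$. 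Choosing $W$ small enough contradicts the lower bound $c$, producing the required neighborhood.

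For the backward direction I argue contrapositively. If $\pi(L)$ is not precompact, pick $g_n\in L$ with $\pi(g_n)$ divergent. After replacing each $g_n$ by a $\Gamma$-translate, the Siegel set decomposition (\ref{align: Sigel set}) lets me write $g_n=k_ns_nc_nf_n^{-1}$ with $k_n\in K$, $c_n\in C$, $f_n\in F$ bounded and $s_n\in S_t$ divergent; after extracting a subsequence there is $\alpha\in \Delta_{\mathbb{Q}}$ with $\alpha(s_n)\to 0$. Set $\mathfrak{u}_n:=\mathrm{Ad}(f_n)\mathrm{Lie}(\mathrm{Rad}_{\mathrm{U}}(\boldsymbol{P}_{\{\alpha\}}))\in \mathfrak{R}$. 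Then $\mathrm{Ad}(g_n)\mathfrak{u}_n=\mathrm{Ad}(k_ns_nc_n)\mathrm{Lie}(\mathrm{Rad}_{\mathrm{U}}(\boldsymbol{P}_{\{\alpha\}}))$, and the weight decomposition (\ref{Align: Lie algebra decomposition of parabolic 1}) shows that every root vector inside $\mathrm{Lie}(\mathrm{Rad}_{\mathrm{U}}(\boldsymbol{P}_{\{\alpha\}}))$ is contracted by $\mathrm{Ad}(s_n)$ by a factor of $\chi(s_n)$ with $\pi_{\alpha}(\chi)\geq 1$; together with the uniform upper bound $\beta(s_n)\leq t$ for $\beta\in \Delta_{\mathbb{Q}}$, this forces $\chi(s_n)\to 0$. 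Since $c_n$ varies in a compact set and $k_n$ acts by a bounded isometry on $\mathfrak{g}$ for a $K$-invariant norm, the lattice $\mathrm{Ad}(g_n)(\mathfrak{u}_n\cap \mathfrak{g}_{\mathbb{Z}})$ sits inside a ball of radius $O(\alpha(s_n))\to 0$ and spans $\mathrm{Ad}(g_n)\mathfrak{u}_n$, so for any fixed $W$ one has $\mathrm{Ad}(g_n)\mathfrak{u}_n\subset \mathrm{Span}_{\mathbb{R}}(\mathfrak{g}_{g_n}\cap W)$ once $n$ is large.

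The hard part, I expect, is the Plücker comparison in the forward direction: translating the geometric containment ``$\mathrm{Ad}(g)\mathfrak{u}$ lies in the span of short lattice vectors'' into a genuine norm estimate on $\mathrm{Ad}(g)w_{\mathfrak{u}}$. This requires a uniform positive lower bound for the quotient covolume $\mathrm{covol}\bigl((\mathfrak{g}_g\cap U)/\mathrm{Ad}(g)(\mathfrak{u}\cap\mathfrak{g}_{\mathbb{Z}})\bigr)$ across all $g\in L$ and $\mathfrak{u}\in \mathfrak{R}$, which I plan to extract by running the $\Gamma$-orbit discreteness argument in a complementary exterior power representation, so that both factors in the multiplicative covolume identity are controlled by the arithmetic structure rather than by the chosen $g$.
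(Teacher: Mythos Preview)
The paper does not prove this proposition at all; it is quoted verbatim from Tomanov--Weiss and used as a black box. So there is no ``paper's proof'' to compare against beyond the original reference.

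Your backward direction (contrapositive via the Siegel decomposition) is the right idea and matches the standard argument. A couple of small things to tighten: you need $c_n$ to normalise $\mathfrak{u}_\alpha$, which is fine once you recall that in Borel's reduction theory the compact set $C$ in \eqref{align: Sigel set} lies in the minimal parabolic $P_0$; and the claim $\chi(s_n)\to 0$ for every positive $\mathbb{Q}$-root $\chi$ with $\pi_\alpha(\chi)\geq 1$ uses that the coefficients $n_\beta$ are bounded by the root system, so $\chi(s_n)\le \alpha(s_n)\cdot\max(1,t)^{N}\to 0$.

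Your forward direction, however, is both over-engineered and has a real gap. The quotient covolume $\mathrm{covol}\bigl((\mathfrak{g}_g\cap U)/\mathrm{Ad}(g)(\mathfrak{u}\cap\mathfrak{g}_{\mathbb{Z}})\bigr)$ cannot be bounded below by a ``$\Gamma$-orbit discreteness'' argument: the subspace $U=\mathrm{Span}_{\mathbb{R}}(\mathfrak{g}_g\cap W)$ depends on $g$ in an uncontrolled way and does not lie in any finite family of $\Gamma$-orbits, so there is no complementary exterior-power vector whose norm realises that quotient covolume. In fact the quotient lattice is spanned by images of vectors of norm $\lesssim\mathrm{diam}(W)$, so its covolume tends to $0$ with $W$, not the other way.

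The good news is that this direction is essentially trivial and needs none of this machinery. If $\pi(L)$ is precompact then $L\subset K'\Gamma$ for some compact $K'\subset G$; since $\mathfrak{g}_{\mathbb Z}$ is discrete and $\mathrm{Ad}(K')$ is bounded, there is $\epsilon>0$ with $\mathfrak{g}_g\cap B(0,\epsilon)=\{0\}$ for every $g\in L$. Taking $W=B(0,\epsilon)$ gives $\mathrm{Span}_{\mathbb{R}}(\mathfrak{g}_g\cap W)=\{0\}$, which certainly contains no $\mathrm{Ad}(g)\mathfrak{u}$. So drop the Pl\"ucker/Minkowski apparatus for this half entirely.
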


\begin{proposition}\label{Proposition: Not uniformly non-divergent criterion}
Let $k$ be a positive integer. For $i=1,\cdots,k$, let $\rho_i:\boldsymbol{G}\to \mathrm{GL}(\boldsymbol{V}_i)$ be linear representations of $\boldsymbol{G}$ defined over $\mathbb{Q}$, with norms $\norm{\cdot}_i$ on $V_i:=\boldsymbol{V}_i(\mathbb{R})$. Let $v_i\in \boldsymbol{V}_i(\mathbb{Q})$ be a nonzero vector for $i=1,\cdots,k$, and $H\subset G$ be a subgroup. Assume that for any $n\in \mathbb{N}$, there exists $g_n\in G$ such that for any $h\in H$, there exists $i\in \{1,\cdots,k\}$ with
\begin{align*}
    \norm{\rho_i(h g_n)v_i}_i<\frac{1}{n},
\end{align*}
then the action of $H$ on $G/\Gamma$ is \textit{not} uniformly non-divergent.
\end{proposition}
\begin{proof}
Assume the contrary, then there exists a compact subset $C\subset G/\Gamma$ such that for any $x\in G/\Gamma$, $Hx\cap C\neq \emptyset$. Since $C$ is compact in $G/\Gamma$, there exists a compact subset $\widetilde{C}\subset G$ such that $\pi(\widetilde{C})=C$, where $\pi:G\to G/\Gamma$ is the natural projection.

As $\rho_i$'s are $\mathbb{Q}$-representations of $\boldsymbol{G}$, and $v_i$'s are nonzero $\mathbb{Q}$-vectors, $\rho_i(\Gamma)v_i$ are discrete in $V_i$ for $i=1,\cdots,k$. In particular, since we consider only finitely many such representations, there exists $\epsilon_1>0$, such that 
\[
\min_{1\leq i\leq k}\inf_{\gamma\in \Gamma}\norm{\rho_i(\gamma)v_i}_i>\epsilon_1.
\]
By compactness of $\widetilde{C}$, there is $0<\epsilon_2<\epsilon_1$ such that 
\begin{align}\label{align: no small vecotrs if uniform non-divergent}
    \min_{i=1,\cdots,k}\inf_{g\in \widetilde{C}\Gamma}\norm{\rho_i(g)v_i}_i>\epsilon_2.
\end{align}
Choose $n\in \mathbb{N}$ such that $1/n<\epsilon_2$. By assumption of the proposition, we may find $g_n\in G$ such that for any $h\in H$, there is $1\leq i\leq k$ such that $\norm{\rho_i(hg_n)v_i}_i<1/n$. However, since the action of $H$ is uniformly non-divergent, for this $g_n$, there exists $h_n\in H$ such that $h_n g_n\in \widetilde{C}\cdot \Gamma$. By (\ref{align: no small vecotrs if uniform non-divergent}), we have for all $1\leq i\leq k$, $\norm{\rho_i(h_n g_n)v_i}_i>\epsilon_2$, which leads to a contradiction. 
\end{proof}

\subsection{Fundamental weights}\label{section: fundamental weights}
For each $j=1,\cdots,r$, let $\bigwedge^{d_j}\mathfrak{g}$ be the $d_j$-th wedge product of the Lie algebra of $G$, where $d_j$ is the dimension of $\mathfrak{u}_j$. We equip $\bigwedge^{d_j}\mathfrak{g}$ with the norm induced from the fixed norm $\norm{\cdot}$ on $\mathfrak{g}$, which we still denote by $\norm{\cdot}$ by abuse of notation. Let $p_j\in \bigwedge^{d_j}\mathfrak{g}$ be the wedge product of an integral basis of $\mathfrak{u}_j\cap \mathfrak{g}_{\mathbb{Z}}$. Similarly, for any $\mathfrak{u}\in \mathfrak{R}_j$, denote by $p_{\mathfrak{u}}\in \bigwedge^{d_j}\mathfrak{g}$ the wedge product of an integral basis of $\mathfrak{u}\cap \mathfrak{g}_{\Z}$. These vectors are well-defined up to sign.

Recall that $G$ acts on $\bigwedge^{d_j}\mathfrak{g}$ through $\rho_j:=\bigwedge^{d_j}\mathrm{Ad}$.
Let $V_j=\mathrm{Span}_{\mathbb{R}}\{\rho_j(g)p_j:g\in G\}$ be the irreducible  linear representation of $G$ with highest weight vector $p_j$ and highest weight $\chi_j$ with respect to $T$. The weights $\chi_1,\cdots,\chi_r$ are called \textit{fundamental weights}, which are linear functionals on the Lie algebra of $T$. For each $j$, denote by $\Phi_j$ the collection of all weights in the weight decomposition of $V_j$ with respect to $T$.

Since $\boldsymbol{G}$ is semisimple, the Killing form on $\mathfrak{g}$ restricts to a strictly positive definite and symmetric bilinear form $(\cdot|\cdot)$ on $\mathrm{Lie}(T)$ (see e.g. \cite{Helgason_1978_Differential_geometry_Lie_groups_and_symmetric_spaces_MR514561}). Therefore, for any linear functional $\lambda$ on $\mathrm{Lie}(T)$, there is a $v_{\lambda}\in\mathrm{Lie}(T)$ such that for any $v\in\mathrm{Lie}(T)$, $\lambda(v)=(v_{\lambda}|v)$.

By the diffeomorphism $\exp:\mathrm{Lie}(T)\to T^{\circ}$, for any linear functional $\lambda$ on $\mathrm{Lie}(T)$, by abuse of notation we also say that $\lambda$ is a linear functional on $T^{\circ}$  by setting
\begin{align}\label{align: character on T as character on Lie(T)}
    \lambda(a):=\lambda(v),
\end{align}
where $a=\exp(v)$, for any $a\in T^{\circ}$. From now on, when we say that some linear functionals $\lambda_1,\cdots,\lambda_k$ are linearly independent on $T^{\circ}$, we mean that they are linearly independent on $\mathrm{Lie}(T)$. This should not cause any confusion.

\subsection{Cartan involution}\label{section: Cartan involution}
Let $\mathfrak{g}=\mathfrak{k}\oplus\mathfrak{p}$ be a Cartan decomposition and $\theta:\mathfrak{g}\to \mathfrak{g}$ be the corresponding Cartan involution defined by $\theta(k+p)=k-p$ for $k\in \mathfrak{k},\; p\in \mathfrak{p}$. 
One can lift this Cartan involution $\theta$ to a global Cartan involution of $\boldsymbol{G}$, which we also denote by $\theta$.  Cartan involutions of $\boldsymbol{G}$ are unique up to conjugation by an element in $G$. As $\boldsymbol{T}$ is a maximal $\mathbb{R}$-split torus in $\boldsymbol{G}$, we can choose a Cartan involution $\tau$ such that $\tau(a)=a^{-1}$ for any $a\in T$. We refer the reader to \cite{Knapp_2002_Lie_groups_beyond_an_introduction_MR1920389} for more on Cartan involutions.

For a linear $\mathbb{R}$-representation $\rho:\boldsymbol{G}\to \mathrm{GL}(V)$, we can decompose $V=\bigoplus_{\chi\in \mathrm{X}(V)}V_{\chi}$
where $\boldsymbol{T}$ acts on each $V_{\chi}$ by some  character $\chi$ and $\mathrm{X}(V)$ is the collection of all characters with $V_{\chi}$ non-zero.
A Cartan involution $\rho(\tau)$ is induced on the image $\rho(\boldsymbol{G})$. Since $\rho(\boldsymbol{G})$ is semisimple, $\rho(\tau)$ can be extended to a Cartan involution on $\mathrm{GL}(V)$  (see e.g. \cite{Mostow_1955_Self_adjoint_groups_MR69830}), which we also denote by $\rho(\tau)$. 
Moreover, $\rho(\tau)$ induces an automorphism on $\mathrm{X}(V)$ by $\rho(\tau)(\chi)=-\chi$.
Also, for every $\chi\in \mathrm{X}(V)$, $v\in \rho(\tau)(V_{\chi})$ and $a\in T$, $\rho(a)v=-\chi(a)v$.

\section{Some Covering Theorems}\label{Section: Covering theorems}
The analysis of torus orbits on homogeneous spaces relies on certain covering theorems, which we will introduce below.
\subsection{A covering for $\mathbb{R}^n$}
\begin{definition}
The invariance dimension of a convex open set $U\subset \mathbb{R}^n$ is the dimension of its stabilizer in $\mathbb{R}^n$, that is,
\begin{align*}
    \mathrm{invdim}( U):=\dim \mathrm{Stab}_{\mathbb{R}^n}(U),
\end{align*}
where $\mathrm{Stab}_{\mathbb{R}^n}(U)=\{x\in \mathbb{R}^n:x+U=U\}$. By convention, we set $\mathrm{invdim} \emptyset=-\infty$.
\end{definition}

\begin{lemma}\label{Lemma: inequality of invariance dimension}\cite[Lemma 2.6]{Solan_2019_Stable_and_well_rounded_lattices_in_diagonal_orbits_MR4040835}
Let $U_1\subset U_2$ be open convex subsets of $\mathbb{R}^n$, then 
\begin{align*}
    \mathrm{invdim} (U_1)\leq \mathrm{invdim} (U_2).
\end{align*}
\end{lemma}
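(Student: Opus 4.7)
The plan is to establish the stronger statement that $\mathrm{Stab}_{\mathbb{R}^n}(U_1)\subset \mathrm{Stab}_{\mathbb{R}^n}(U_2)$, from which the dimension inequality follows at once. The case $U_1=\emptyset$ is covered by the convention $\mathrm{invdim}(\emptyset)=-\infty$, so I would assume both sets nonempty.

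First, I would note that for any nonempty open convex $U\subset \mathbb{R}^n$ the stabilizer $\mathrm{Stab}_{\mathbb{R}^n}(U)$ is actually a linear subspace, not merely a closed subgroup. Indeed, if $v+U=U$ and $u\in U$, then by convexity $u+tv=(1-t)u+t(u+v)\in U$ for every $t\in[0,1]$, so $tv+U\subset U$; combined with the group structure (which automatically gives $-v\in\mathrm{Stab}_{\mathbb{R}^n}(U)$ and the symmetric conclusion for $-v$), this upgrades $v\in\mathrm{Stab}_{\mathbb{R}^n}(U)$ to $\mathbb{R}v\subset \mathrm{Stab}_{\mathbb{R}^n}(U)$.

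Next, I would fix $v\in \mathrm{Stab}_{\mathbb{R}^n}(U_1)$ and any $u_2\in U_2$, with the task of showing $u_2+sv\in U_2$ for every $s\in\mathbb{R}$. Choosing an arbitrary $u_1\in U_1$, the entire line $u_1+\mathbb{R}v$ sits inside $U_1\subset U_2$. For each $t\in(0,1]$ and each $s\in\mathbb{R}$ the convex combination
\begin{equation*}
(1-t)u_2 + t\bigl(u_1 + (s/t)v\bigr) \;=\; (1-t)u_2 + tu_1 + sv
\end{equation*}
belongs to $U_2$, and sending $t\to 0^+$ yields $u_2+sv\in\overline{U_2}$.

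The last step is to upgrade membership in $\overline{U_2}$ to membership in the open set $U_2$ itself. I would invoke the standard convex-geometric fact that when $U$ is open and convex, $p\in U$, and $q\in\overline{U}$, the half-open segment from $p$ to $q$ (excluding the endpoint $q$) lies in $U$. Applied with $p=u_2\in U_2$ and $q=u_2+s'v\in\overline{U_2}$ for an $s'$ of the same sign as $s$ but of strictly larger absolute value, this places $u_2+sv$ strictly inside the segment, hence in $U_2$. That gives $v\in\mathrm{Stab}_{\mathbb{R}^n}(U_2)$ and completes the proof. The only genuinely nontrivial ingredient is this closure-to-interior passage; the rest is a routine convexity calculation, so I do not anticipate any real obstacle.
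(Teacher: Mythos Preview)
Your argument is correct: the inclusion $\mathrm{Stab}_{\mathbb{R}^n}(U_1)\subset\mathrm{Stab}_{\mathbb{R}^n}(U_2)$ follows cleanly from the convex-combination trick and the standard interior/closure fact you invoke, and the dimension inequality is immediate. Note, however, that the paper does not supply its own proof of this lemma at all; it simply quotes the result from \cite[Lemma~2.6]{Solan_2019_Stable_and_well_rounded_lattices_in_diagonal_orbits_MR4040835}, so there is nothing in the present paper to compare your approach against.
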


\begin{lemma}\label{Lemma: invariance dimension of convex set determined by k linearly independent functionals}
Given $k$ linearly independent linear functionals $\lambda_1,\cdots,\lambda_k$ on $\mathbb{R}^n$
and $k$ real numbers $a_1,\cdots,a_k\in \mathbb{R}$, we define
\begin{align*}
    U=\{x\in \mathbb{R}^n: \lambda_i(x)<a_i, \forall i=1,\cdots,k\}.
\end{align*}
Then $U$ is an open convex set with $\mathrm{invdim}(U)\leq n-k$.
\end{lemma}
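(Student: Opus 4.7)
The plan is a direct structural argument: openness and convexity are immediate from the definition of $U$, so the only substantive point is the bound on the invariance dimension. I would first note that $U$ is the intersection of the open half-spaces $\{x : \lambda_i(x) < a_i\}$, each of which is open and convex; any intersection of open sets and of convex sets inherits those properties, so $U$ is an open convex subset of $\mathbb{R}^n$.

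For the invariance dimension bound, I would first observe that $\mathrm{Stab}_{\mathbb{R}^n}(U)$ is in fact a linear subspace of $\mathbb{R}^n$: it is obviously closed under addition and under negation (since $y+U = U$ forces $-y+U = U$), and it is closed (it is even an algebraic subgroup of the additive group). Thus it suffices to exhibit $n-k$ linear conditions cutting it out, and this is where the linearly independent functionals $\lambda_i$ come in.

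The key step is to show $\mathrm{Stab}_{\mathbb{R}^n}(U) \subset \bigcap_{i=1}^k \ker(\lambda_i)$. Suppose $y \in \mathrm{Stab}_{\mathbb{R}^n}(U)$. Then $ny \in \mathrm{Stab}_{\mathbb{R}^n}(U)$ for every $n \in \mathbb{Z}$, so for any fixed $x \in U$ we have $x + ny \in U$ for all $n \in \mathbb{Z}$, giving
\[
\lambda_i(x) + n \lambda_i(y) < a_i \qquad \text{for every } n \in \mathbb{Z} \text{ and every } i.
\]
Letting $n \to \pm \infty$ forces $\lambda_i(y) = 0$ for each $i$, so $y \in \bigcap_i \ker(\lambda_i)$.

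Since $\lambda_1, \dots, \lambda_k$ are linearly independent, the linear map $(\lambda_1, \dots, \lambda_k) \colon \mathbb{R}^n \to \mathbb{R}^k$ is surjective, so its kernel has dimension exactly $n - k$. Hence $\mathrm{Stab}_{\mathbb{R}^n}(U) \subset \bigcap_i \ker(\lambda_i)$ has dimension at most $n - k$, which is precisely the claimed bound. The bilinear form $(\cdot|\cdot)$ plays no logical role in the conclusion itself — it is listed in the hypothesis only because the lemma is later applied in contexts where the linear functionals come from an inner-product dual. The main (and only) obstacle is the telescoping step above; everything else is bookkeeping.
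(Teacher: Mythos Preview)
Your proof is correct and follows essentially the same idea as the paper's: both arguments use that $\mathrm{Stab}_{\mathbb{R}^n}(U)$ is a linear subspace, translate a point of $U$ along an unbounded ray in the stabilizer, and deduce from the one-sided bound $\lambda_i(\cdot)<a_i$ that each $\lambda_i$ vanishes on the stabilizer. The only difference is cosmetic: the paper represents each $\lambda_i$ via the bilinear form as $(v_i\,|\,\cdot)$ and phrases the conclusion as ``$v_i \perp \mathrm{Stab}_{\mathbb{R}^n}(U)$'', whereas you work directly with the functionals and correctly observe that the inner product is logically irrelevant to the lemma.
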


\begin{proof}
The claim that $U$ is open convex follows by definition. Without loss of generality, we may assume that $0\in U$. 
Let $(\cdot|\cdot)$ be a strictly positive definite and symmetric bi-linear form on $\mathbb{R}^n$. 
For $i=1,\cdots,k$, let $v_i\in \mathbb{R}^n$ be such that $\lambda_i(x)=(v_i|x)$, for any $x\in \mathbb{R}^n.$
We claim that each $v_i$ is perpendicular to $\mathrm{Stab}_{\mathbb{R}^n}(U)$ with respect to $(\cdot|\cdot)$.

Since $\mathrm{Stab}_{\mathbb{R}^n}(U)$ is a vector space, for each $i$, we can write $v_i=v_i^1+v_i^2$, where $v_i^1\in \mathrm{Stab}_{\mathbb{R}^n}(U)$ and $v_i^2\in (\mathrm{Stab}_{\mathbb{R}^n}(U))^{\perp}$. As $0\in U$, for all $t\in \mathbb{R}$, $t v_i^1\in U$, and so
\begin{align*}
    (v_i|t v_i^1)=\lambda_i(t v_i^1)<a_i,\; \forall t\in \mathbb{R}.
\end{align*}
This can happen only if $(v_i|v_i^1)=0$, which implies $v_i^1=0$. This proves the claim and thus the lemma.
\end{proof}

\begin{theorem}\label{Theorem: a covering theorem}\cite[Theorem 1.4]{Solan_2019_Stable_and_well_rounded_lattices_in_diagonal_orbits_MR4040835}
Let $\mathfrak{U}$ be an open cover of $\mathbb{R}^n$. Assume that 
\begin{itemize}
    \item[(1)] The cover $\{\mathrm{conv}(U):U\in \mathfrak{U}\}$ is locally finite\footnote{A collection of subsets of $\mathbb{R}^n$ is locally finite if for any compact subset $C\subset \mathbb{R}^n$, there are only finitely many elements in the collection intersect $C$.}, where $\mathrm{conv}(U)$ denotes the convex hull of $U$;

\item[(2)] For every $k\leq n$ and $k$ different sets $U_1,\cdots, U_k\in \mathfrak{U}$,
\begin{align*}
    \mathrm{invdim}\text{ }\mathrm{conv}(U_1\cap U_2\cap \cdots\cap U_k)\leq n-k;
\end{align*}
\end{itemize}
Then there are $n+1$ elements in $\mathfrak{U}$ with nontrivial intersection.
\end{theorem}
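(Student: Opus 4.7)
My plan is to argue by contradiction via a nerve construction. Suppose no $n+1$ elements of $\mathfrak{U}$ share a common point, so the cover has multiplicity at most $n$, and aim to contradict hypothesis (2) together with the topology of $\mathbb{R}^n$.

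First I would pass to the locally finite open cover $\{K_U := \mathrm{int}(\mathrm{conv}(U))\}_{U \in \mathfrak{U}}$, which still covers $\mathbb{R}^n$ since $U \subset K_U$, and form its nerve $N$: a locally finite simplicial complex in which a $(k-1)$-simplex is added for each nonempty $k$-fold intersection $K_{U_1} \cap \cdots \cap K_{U_k}$. By the multiplicity hypothesis, $\dim |N| \leq n-1$. Using a partition of unity $\{\psi_U\}$ subordinate to $\{K_U\}$, I would construct the standard nerve map $\phi : \mathbb{R}^n \to |N|$ by $\phi(x) = \sum_U \psi_U(x)\, v_U$, which is continuous and, thanks to local finiteness, proper. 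The goal is then to exhibit a contradiction at the level of compactly supported cohomology: since $H^n_c(|N|) = 0$ by the dimension bound while $H^n_c(\mathbb{R}^n) \cong \mathbb{Z}$, it would suffice to show that the proper map $\phi$ has nonzero degree, i.e.\ that the induced map on $H^n_c$ is surjective.

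Hypothesis (2) provides the required local-degree input. Over the interior of a $(k-1)$-simplex of $|N|$ corresponding to $U_{i_1}, \ldots, U_{i_k}$, the fiber of $\phi$ sits inside $U_{i_1} \cap \cdots \cap U_{i_k}$ and is essentially modelled on the lineality space of $\mathrm{conv}(U_{i_1} \cap \cdots \cap U_{i_k})$, which by hypothesis has dimension at most $n-k$. The codimension of the fiber matches the simplex dimension, yielding the dimension count $(k-1) + (n-k) = n - 1$, just enough to support a local degree argument near generic top simplices. The main obstacle will be making this rigorous, since $|N|$ is not a manifold and the fibers of $\phi$ need not be genuine smooth fibers; a natural alternative route is induction on $n$, slicing $\mathbb{R}^n$ by a generic affine hyperplane chosen transverse to the (in any compact region, locally finitely many) lineality subspaces of the relevant convex hulls, verifying that the restricted cover of the hyperplane satisfies both hypotheses with $n$ replaced by $n-1$, and invoking the inductive hypothesis. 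The two technical points to confirm along this route are the existence of a sufficiently transverse hyperplane (via a Baire-category or Sard-type argument using local finiteness) and that the invariance-dimension bound drops correctly upon restriction — precisely where the convex-geometric hypothesis (2) becomes indispensable.
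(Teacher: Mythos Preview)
The paper does not prove this theorem; it is quoted without proof from \cite{Solan_2019_Stable_and_well_rounded_lattices_in_diagonal_orbits_MR4040835}, so there is no in-paper argument to compare against. That said, your proposal has two concrete gaps.

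First, the properness claim for the nerve map $\phi$ is false as stated. Local finiteness of $\{K_U\}$ ensures only that $\phi$ sends compacta into finite subcomplexes; for properness you need the converse, and the preimage of the closed star of a vertex $v_U$ lies in $\overline{K_U}$, which need not be bounded. Hypothesis (2) with $k=1$ only says $\mathrm{conv}(U)$ contains no full line, and a half-space already satisfies $\mathrm{invdim}=n-1$. Without properness the comparison $H^n_c(\mathbb{R}^n)\to H^n_c(|N|)$ is unavailable and your degree argument does not get off the ground. This is precisely the place where hypothesis (2) for all $k$ must do real work, and you have not said how. There is also a smaller slip: you form the nerve of $\{K_U\}$ but bound its dimension via the assumed multiplicity of $\{U\}$; since $K_U\supset U$, the multiplicity of $\{K_U\}$ can be strictly larger. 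Working directly with the nerve of $\{U\}$ avoids this, as $\{U\}$ inherits local finiteness from $\{\mathrm{conv}(U)\}$.

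Second, the inductive alternative does not close. Slicing by a generic hyperplane $H\cong\mathbb{R}^{n-1}$ and applying the inductive hypothesis yields $n$ sets with a common point in $H$, hence $n$ sets intersecting in $\mathbb{R}^n$; the theorem demands $n+1$. You give no mechanism to promote the $n$-fold intersection to an $(n+1)$-fold one, and hypothesis (2) at level $k=n$ only says the $n$-fold intersection has trivial lineality, which by itself does not force an additional set to meet it. Some further idea---for instance varying $H$ in a one-parameter family and analyzing the transitions where the intersecting $n$-tuple changes, or a different inductive scheme altogether---is required to bridge this gap.
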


\subsection{A covering for $G$}
In this subsection, we let $\boldsymbol{G}$, $\boldsymbol{M}$, $\boldsymbol{D}$ and $A$ be as in Theorem \ref{Theorem: main theorem}. Recall that $\pi:G\to G/\Gamma$ is the natural projection map. For $\eta>0$, define
\begin{align*}
    X_{\eta}:=\{\pi(g)\in G/\Gamma:\mathfrak{g}_g\cap W_{\eta}=\{0\}\},
\end{align*}
where $W_{\eta}$ is an open ball with radius $\eta$ in $\mathfrak{g}$ centered at $0$. By (generalized) Mahler's criterion, $X_{\eta}$ is a compact subset of $G/\Gamma$. Define \begin{align*}
    G^M_{\eta}:=\{g\in G: M\pi(g)\cap X_{\eta}\neq \emptyset\}.
\end{align*}
Fix a maximal compact subgroup ${K}$ of ${G}$.
For every parabolic $\mathbb{Q}$-subgroup $\boldsymbol{P}$, let $\boldsymbol{U_P}$ be the unipotent radical of $\boldsymbol{P}$. Then $\boldsymbol{P}/\boldsymbol{U_P}$ is a reductive $\mathbb{Q}$-group. Let $\boldsymbol{S_P}^{\prime}$ be the $\mathbb{Q}$-split part of the central torus of $\boldsymbol{P}/\boldsymbol{U_P}$. We fix lifts $\boldsymbol{S_P}$ and $\boldsymbol{A_P}$ of $\boldsymbol{S_P}^{\prime}$ to $\boldsymbol{P}$ such that $\boldsymbol{S_P}$ is  $\mathbb{Q}$-split and $\boldsymbol{A_P}$ is an $\mathbb{R}$-split torus invariant under the Cartan involution associated with ${K}$. Let $\Delta_P$ be the set of $\mathbb{Q}$-simple roots of $(\boldsymbol{S_P},\boldsymbol{P})$. As $\boldsymbol{A_P}$ is conjugate to $\boldsymbol{S_P}$ by a unique element in $\boldsymbol{U_P}$, we can also think of $\Delta_P$ as the set of simple roots for $(\boldsymbol{A_P},\boldsymbol{P})$.

Let $\boldsymbol{^0\!P}$ be the identity component of the subgroup of $\boldsymbol{P}$ defined by the common kernel of all $\mathbb{Q}$-characters of $\boldsymbol{P}$. 
Assume that there are $I\subset \{1,\cdots,r\}$ and $\lambda\in F$ 
($F$ is as in (\ref{align: Sigel set})) such that 
$\boldsymbol{P}=\boldsymbol{P}_I^{\lambda}:=\lambda \boldsymbol{P}_I \lambda^{-1}$.
By rational Langlands decomposition, for each $g\in G$, we can write
\[
g=k_g(I,\lambda) a_g(I,\lambda) p_g(I,\lambda)
\]
 with
 \[
 k_g(I,\lambda)\in K,\;a_g(I,\lambda)\in A_{P_I^{\lambda}}^{\circ},\;p_g(I,\lambda)\in ({^0\!P}_I^{\lambda})^{\circ}.
 \]
For any subset $I\subset \{1,\cdots,r\}$, $\lambda \in F$, bounded set $B\subset G$ and real numbers $\theta,\epsilon>0$, as in \cite{Zhang_Zhang_2021_Nondivergence_on_homogeneous_spaces_and_rigid_totally_geodesics}, we define
\begin{align*}
    \Sigma_{I,\lambda,B}^M(\theta)= \{  g\in G: g^{-1}\boldsymbol{M}g\subset \boldsymbol{P}_I^{\lambda}, \alpha(a_g(I,\lambda))<\theta,\forall \alpha\in \Delta_{P_I^{\lambda}}, \text{ and }\\
    \exists m\in M \text{ such that } p_g(I,\lambda) g^{-1}mg\in (B\cap  {^0\!P}_I^{\lambda})\cdot(\Gamma\cap {^0\!P}_I^{\lambda})  \},
\end{align*}
and 
\begin{align*}
    \Sigma_{I,\lambda,B}^M(\theta,\epsilon)=\{g\in \Sigma_{I,\lambda,B}^M(\theta):\exists \alpha\in \Delta_{P_I^{\lambda}},\alpha(a_g(I,\lambda))<\epsilon \}.
\end{align*}

\begin{proposition}\cite[Proposition 3.1]{Zhang_Zhang_2021_Nondivergence_on_homogeneous_spaces_and_rigid_totally_geodesics}\label{Proposition: a description of cusps when taking consideration of M}
There exist $0<\theta<1$ and a compact subset $B\subset G$ such that the following holds: there exist $0<\eta_0<1$ and a function $\epsilon_0:(0,\eta_0)\to (0,\infty)$ such that  $\lim_{\eta\to 0}\epsilon_0(\eta)=0$, and
\begin{align*}
    g\notin G_{\eta}^M \implies g\in \bigcup_{I\subset \{1,\cdots,r\},\lambda\in F} \Sigma_{I,\lambda,B}^M(\theta,\epsilon_0(\eta))\cdot \Gamma.
\end{align*}
\end{proposition}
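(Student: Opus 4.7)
The plan is to combine the reduction-theory cusp structure of $G/\Gamma$ with a uniform small-vector analysis along the whole $M$-orbit. I would proceed in three steps.

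First (pointwise analysis). For each $m \in M$, the hypothesis $\pi(mg) \notin X_\eta$ gives a nonzero vector in $\mathfrak{g}_{mg} \cap W_\eta$. Choosing $\eta$ small enough that $W_\eta \subset W_0$, the Zassenhaus neighborhood of Proposition \ref{Proposition: existence of Zassenhauss neighborhood}, the subspace $\mathrm{Span}_\mathbb{R}(\mathfrak{g}_{mg} \cap W_\eta)$ generates a unipotent Lie subalgebra of $\mathfrak{g}$. By Proposition \ref{Proposition: if the span is unipotent, then there exist standard parabolic}, this unipotent subalgebra has the form $\mathrm{Ad}(mg\gamma_m)\mathfrak{u}_{i(m)}$ for some $\gamma_m \in \Gamma$ and some $i(m) \in \{1,\dots,r\}$. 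After absorbing an element of the finite set $F$, one sees that the small lattice vectors in $\mathfrak{g}_{mg}$ come from $\mathrm{Ad}((mg)^{-1})\mathrm{Lie}(\boldsymbol{U}_{P_{I(m)}^{\lambda(m)}})$ for some $\boldsymbol{P}_{I(m)}^{\lambda(m)} \in \mathcal{B}$.

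Second (uniformity across the $M$-orbit). I would upgrade this pointwise description to one that is uniform in $m$. The collection $\mathcal{B}$ is finite and $\Gamma$-translates of its elements are discrete in a suitable sense, so by a continuity/connectedness argument on $M$ combined with the semisimplicity of $\boldsymbol{M}$, the pair $(I(m),\lambda(m))$ and the $\Gamma$-class of $\gamma_m$ are forced to be constant in $m$, say equal to $(I,\lambda)$. Since $M$ permutes the small lattice vectors in $\mathfrak{g}_g$ through its adjoint action, the parabolic $\boldsymbol{P}_I^\lambda$ must be normalized by $g^{-1}Mg$ as an algebraic subgroup; because $\boldsymbol{M}$ is semisimple and parabolic subgroups are their own normalizers, this yields $g^{-1}\boldsymbol{M}g \subset \boldsymbol{P}_I^\lambda$.

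Third (Langlands and Siegel estimates). Once the parabolic is fixed, the rational Langlands decomposition $g = k_g(I,\lambda)\, a_g(I,\lambda)\, p_g(I,\lambda)$ applies. Using the root-space decomposition (\ref{Align: Lie algebra decomposition of parabolic 1}) of $\mathrm{Lie}(\boldsymbol{U}_{P_I^\lambda})$, the presence of a lattice vector of norm $<\eta$ in $\mathrm{Ad}(g)\mathrm{Lie}(\boldsymbol{U}_{P_I^\lambda}) \cap \mathfrak{g}_\mathbb{Z}$ translates into a quantitative bound $\alpha(a_g(I,\lambda)) < \epsilon_0(\eta)$ for at least one $\alpha \in \Delta_{P_I^\lambda}$, with $\epsilon_0(\eta) \to 0$ as $\eta \to 0$. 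The uniform bound $\alpha(a_g(I,\lambda)) < \theta$ for all $\alpha \in \Delta_{P_I^\lambda}$ follows from the Siegel set decomposition (\ref{align: Sigel set}) applied after right-multiplying by $\Gamma$ to reposition $g$ inside a fundamental domain. Finally, the existence of $m \in M$ with $p_g(I,\lambda)\, g^{-1}mg \in (B \cap {^0P}_I^\lambda)(\Gamma \cap {^0P}_I^\lambda)$ is obtained by applying reduction theory on the reductive quotient ${^0P}_I^\lambda/({^0P}_I^\lambda \cap \Gamma)$ to the image of the $M$-orbit there, and extracting a single representative that lies in a prescribed Siegel set $B$.

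The main obstacle is the uniformity step. Selecting a \emph{single} parabolic $\boldsymbol{P}_I^\lambda$ that witnesses the small-vector phenomenon along the entire $M$-orbit — rather than one that depends on $m$ — goes beyond Dani--Margulis nondivergence for a single point. I expect to handle this via the observation that the map $m \mapsto$ (parabolic containing $g^{-1}mg$'s small-vector data) lands in a set that is both finite-to-one over $\Gamma$-orbits and continuous in $m$, hence locally constant; semisimplicity of $\boldsymbol{M}$ together with the normalizer-rigidity of parabolic $\mathbb{Q}$-subgroups then forces genuine constancy and the containment $g^{-1}\boldsymbol{M}g \subset \boldsymbol{P}_I^\lambda$.
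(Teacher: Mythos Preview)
The paper does not prove this proposition; it is quoted verbatim from \cite[Proposition 3.1]{Zhang_Zhang_2021_Nondivergence_on_homogeneous_spaces_and_rigid_totally_geodesics} and used as a black box. So there is no ``paper's own proof'' to compare against. That said, your sketch contains two genuine gaps that would prevent it from being completed as written.

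\textbf{Step 1 misapplies Proposition \ref{Proposition: if the span is unipotent, then there exist standard parabolic}.} That proposition takes as input Lie algebras $\mathfrak{v}_1,\dots,\mathfrak{v}_k$ that are \emph{already known} to belong to $\mathfrak{R}$ (unipotent radicals of maximal parabolic $\mathbb{Q}$-subgroups). What you have from $\pi(mg)\notin X_\eta$ is only a single short nilpotent lattice vector in $\mathfrak{g}_{mg}$; the unipotent subalgebra generated by $\mathrm{Span}_{\mathbb{R}}(\mathfrak{g}_{mg}\cap W_\eta)$ need not be the radical of any rational parabolic, nor does the proposition say it is. You are skipping the bridge from ``short nilpotent vector'' to ``element of $\mathfrak{R}$''; this bridge is precisely what reduction theory (Siegel sets) supplies, and it should be the \emph{starting point}, not a consequence.

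\textbf{Step 2's uniformity mechanism does not work.} The assignment $m\mapsto (I(m),\lambda(m),\gamma_m)$ is not well-defined (several parabolics can witness a short vector), and it is not continuous: the relevant vectors are lattice points, so as $m$ varies the shortest one can jump. Your second try, ``$M$ permutes the small lattice vectors in $\mathfrak{g}_g$'', is false for the same reason: $\mathfrak{g}_{mg}=\mathrm{Ad}(m)\mathfrak{g}_g$, but $\mathrm{Ad}(m)$ does not preserve the norm, so the set of \emph{short} vectors is not $M$-stable and there is nothing for $g^{-1}Mg$ to normalize. The actual argument in \cite{Zhang_Zhang_2021_Nondivergence_on_homogeneous_spaces_and_rigid_totally_geodesics} (building on \cite{Daw_Gorodnik_Ullmo_2021_Convergence_of_measures_on_compactifications_of_locally_symmetric_spaces_MR4229603,Daw_Gorodnik_Ullmo_2021_The_space_of_homogeneous_probability_measures_on_is_compact}) proceeds in the opposite order: one first places a \emph{single} point $m_0 g\gamma$ in a Siegel domain for some standard $\boldsymbol{P}_I^{\lambda}$ via (\ref{align: Sigel set}), then uses that $\boldsymbol{M}$ is semisimple (hence lands, up to the unipotent radical, in the Levi of $\boldsymbol{P}_I^{\lambda}$) together with reduction theory on ${^0P}_I^{\lambda}/({^0P}_I^{\lambda}\cap\Gamma)$ to produce the $m\in M$ in the definition of $\Sigma^M_{I,\lambda,B}(\theta)$. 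The containment $g^{-1}\boldsymbol{M}g\subset \boldsymbol{P}_I^{\lambda}$ comes from this structural step, not from any constancy-in-$m$ argument. The quantitative bound $\alpha(a_g)<\epsilon_0(\eta)$ is then extracted from the hypothesis that \emph{no} $m$ brings the orbit into $X_\eta$, applied at the chosen $m_0$.
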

For any $I\subset \{1,\cdots,r\}$, $i\in I$ and $\lambda\in F$, define $\alpha_i^{\lambda}(g):=\alpha_i(\lambda^{-1} g \lambda)$ for $g\in \lambda T\lambda^{-1}$, where $\alpha_i$ is the $i$-th simple root. Then we may assume that $\alpha_i^{\lambda}\in \Delta_{P_I^{\lambda}}$. For our purpose in this article, we further define the following set. For any $I\subset \{1,\cdots,r\}$, $i\in I$, $\lambda\in F$, bounded set $B\subset G$, and real numbers $\theta,\epsilon>0$, we denote 
\begin{align}\label{align: definition of Sigma when specifying a simple root}
    \Sigma^M_{I,i,\lambda,B}(\theta,\epsilon):=\{g\in G: g\in \Sigma^M_{I,\lambda,B}(\theta), \text{ and } \alpha_i^{\lambda}(a_g(I,\lambda))<\epsilon\}.
\end{align}
Note that by definition, we have
\begin{align}\label{align:old Sigma is union of new Sigma}
     \Sigma_{I,\lambda,B}^M(\theta,\epsilon)=\bigcup_{i\in I} \Sigma_{I,i,\lambda,B}^M(\theta,\epsilon).
\end{align}

Let $\mathfrak{v} \in \mathfrak{R}$ and 
$\boldsymbol{Q}$ be the maximal parabolic $\mathbb{Q}$-subgroup of $\boldsymbol{G}$ whose unipotent radical gives back $\mathfrak{v}$. We can find $i\in \{1,\cdots,r\}$, $\lambda\in F$, $\gamma\in \Gamma$ such that $\boldsymbol{Q}=\gamma \boldsymbol{P}_i^{\lambda}\gamma^{-1}$. 
 Fix $B,\theta,\eta_0,\epsilon_0$ as in Proposition \ref{Proposition: a description of cusps when taking consideration of M}. For $0<\eta<\eta_0$, define
\begin{align}\label{align: definition of U_mathfrak{v}}
    U_{\mathfrak{v}}^M(\epsilon_0(\eta)):=\bigcup_{I\subset \{1,\cdots,r\},i\in I}\Sigma^M_{I,i,\lambda,B}(\theta,\epsilon_0(\eta))\cdot \gamma^{-1}.
\end{align}
By Proposition \ref{Proposition: a description of cusps when taking consideration of M} and (\ref{align:old Sigma is union of new Sigma}), we obtain
\begin{corollary}\label{Corollary: a covering for group}
For any $0<\eta<\eta_0$, we have $G=G_{\eta}^M \cup \bigcup_{\mathfrak{u}\in \mathfrak{R}} U_{\mathfrak{u}}^M(\epsilon_0(\eta))$.
\end{corollary}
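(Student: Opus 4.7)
The plan is to chase through the definitions and assemble the statement directly from Proposition \ref{Proposition: a description of cusps when taking consideration of M} and the decomposition (\ref{align:old Sigma is union of new Sigma}). Given $g \in G$, there is nothing to prove if $g \in G^M_\eta$, so I would focus on the complementary case. Proposition \ref{Proposition: a description of cusps when taking consideration of M} then produces $\gamma \in \Gamma$, a subset $I \subset \{1,\ldots,r\}$, and $\lambda \in F$ such that $g\gamma^{-1} \in \Sigma^M_{I,\lambda,B}(\theta,\epsilon_0(\eta))$. Applying the identity (\ref{align:old Sigma is union of new Sigma}) refines this to some index $i \in I$ with
\[
g\gamma^{-1} \in \Sigma^M_{I,i,\lambda,B}(\theta,\epsilon_0(\eta)).
\]

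The next step is to produce the witness $\mathfrak{v} \in \mathfrak{R}$. The natural candidate is obtained by setting $\boldsymbol{Q} := \gamma^{-1}\boldsymbol{P}_i^\lambda \gamma$, which is a proper parabolic $\mathbb{Q}$-subgroup of $\boldsymbol{G}$ conjugate to $\boldsymbol{P}_i$, and letting $\mathfrak{v}$ be the Lie algebra of its unipotent radical. Then $\mathfrak{v} \in \mathfrak{R}_i \subset \mathfrak{R}$. Moreover, the triple $(i,\lambda,\gamma^{-1})$ is a valid choice of data representing $\mathfrak{v}$ in the sense required by the definition (\ref{align: definition of U_mathfrak{v}}), since $\boldsymbol{Q} = (\gamma^{-1})\boldsymbol{P}_i^\lambda (\gamma^{-1})^{-1}$. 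Consequently, $\Sigma^M_{I,i,\lambda,B}(\theta,\epsilon_0(\eta)) \cdot (\gamma^{-1})^{-1} = \Sigma^M_{I,i,\lambda,B}(\theta,\epsilon_0(\eta)) \cdot \gamma$ sits inside $U^M_\mathfrak{v}(\epsilon_0(\eta))$, and in particular $g \in U^M_\mathfrak{v}(\epsilon_0(\eta))$, as desired.

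The proof is essentially a matter of unwinding definitions, so there is no genuinely hard step — the substance is already packed into Proposition \ref{Proposition: a description of cusps when taking consideration of M}. The only point deserving care is the bookkeeping of indices: one must check that the Lie algebra $\mathfrak{v}$ built from $(i,\lambda,\gamma^{-1})$ lies in $\mathfrak{R}$ (which is immediate because $\mathfrak{R}$ is closed under $G$-conjugation, and $\mathfrak{R}_i$ exhausts all $\mathbb{Q}$-conjugates of the unipotent radical of $\boldsymbol{P}_i$), and that the triple $(i,\lambda,\gamma^{-1})$ is admissible as a representative of $\mathfrak{v}$ in the definition of $U^M_\mathfrak{v}$. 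Neither verification requires more than the conjugation identity displayed above.
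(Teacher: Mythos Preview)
Your proof is correct and follows exactly the approach indicated in the paper, which derives the corollary in one line from Proposition~\ref{Proposition: a description of cusps when taking consideration of M} and the decomposition~(\ref{align:old Sigma is union of new Sigma}). You have simply written out the bookkeeping that the paper leaves implicit.
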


\begin{proposition}\label{Proposition: there is m in M such that Ad(mg)v is in W_0}
Let $\theta,\eta_0,\epsilon_0$ be as in Proposition \ref{Proposition: a description of cusps when taking consideration of M}. Given a bounded set $C\subset G$, for all sufficiently small $0<\eta<\eta_0$ (depending on C), the following holds: For any $i\in \{1,\cdots,r\}$, $\lambda\in F$ and $\gamma\in \Gamma$, let $\boldsymbol{Q}=\gamma \boldsymbol{P}_i^{\lambda}\gamma^{-1}$ be a maximal parabolic $\mathbb{Q}$-subgroup  and define $\mathfrak{v}:=\mathrm{Lie}(\mathrm{Rad}_{\mathrm{U}}(\boldsymbol{Q}))$. Given any $I\subset \{1,\cdots,r\}$ containing $i$, $g\in G$, and $m\in M$. 
Assume that $m$ and $g_1:=g\gamma$ satisfy
\begin{itemize}
    \item $g_1^{-1} M g_1 \subset P_I^{\lambda}$;
    \item  $\forall \alpha\in \Delta_{P_I^{\lambda}}$, $\alpha(a_{g_1}(I,\lambda))<\theta$;
    
    \item $\alpha_i^{\lambda}(a_{g_1}(I,\lambda))<\epsilon_0(\eta)$;
    
    \item $p_{g_1}(I,\lambda)\cdot g_1^{-1}m g_1\in (C\cap {^0\!P}_I^{\lambda})\cdot (\Gamma\cap {^0\!P}_I^{\lambda})$.
\end{itemize}
Then $\mathrm{Ad}(mg)\mathfrak{v}\subset \mathrm{Span}_{\mathbb{R}}(\mathfrak{g}_{mg}\cap W_0)$, where $W_0$ is a Zassenhaus neighborhood as in Proposition \ref{Proposition: existence of Zassenhauss neighborhood}.
\end{proposition}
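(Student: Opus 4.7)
The plan is to produce, for an appropriate integral basis $\{Y_j\}$ of $\mathfrak{v}\cap\mathfrak{g}_{\mathbb{Z}}$, a uniform bound $\|\mathrm{Ad}(mg)Y_j\|\to 0$ as $\eta\to 0$. Since the vectors $\mathrm{Ad}(mg)Y_j$ then lie in $\mathfrak{g}_{mg}$ (as $Y_j\in\mathfrak{g}_{\mathbb{Z}}$) and span $\mathrm{Ad}(mg)\mathfrak{v}$, once $\eta$ is small enough they all fit inside a ball contained in $W_0$, yielding $\mathrm{Ad}(mg)\mathfrak{v}\subset\mathrm{Span}_{\mathbb{R}}(\mathfrak{g}_{mg}\cap W_0)$.

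First I would reduce to $g_1=g\gamma$. Set $\mathfrak{v}':=\mathrm{Lie}(\mathrm{Rad}_{\mathrm{U}}(\boldsymbol{P}_i^\lambda))$, so $\mathrm{Ad}(\gamma)\mathfrak{v}'=\mathfrak{v}$ and $\mathrm{Ad}(\gamma)\mathfrak{g}_{\mathbb{Z}}=\mathfrak{g}_{\mathbb{Z}}$; hence $\mathfrak{g}_{mg}=\mathfrak{g}_{mg_1}$ and $\mathrm{Ad}(mg)(\mathfrak{v}\cap\mathfrak{g}_{\mathbb{Z}})=\mathrm{Ad}(mg_1)(\mathfrak{v}'\cap\mathfrak{g}_{\mathbb{Z}})$. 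From the fourth hypothesis choose $c\in C\cap{^0P}_I^\lambda$ and $\gamma'\in\Gamma\cap{^0P}_I^\lambda$ with $p_{g_1}(I,\lambda)g_1^{-1}mg_1=c\gamma'$; combining with the Langlands factorization $g_1=k_{g_1}a_{g_1}p_{g_1}$ gives $mg_1=k_{g_1}a_{g_1}c\gamma'$. The key observation is that $\gamma'\in\Gamma\cap P_i^\lambda$, so $\mathrm{Ad}(\gamma')$ simultaneously preserves $\mathfrak{g}_{\mathbb{Z}}$ and $\mathfrak{v}'$; hence it preserves the lattice $\mathfrak{v}'\cap\mathfrak{g}_{\mathbb{Z}}$, and
\[
\mathrm{Ad}(mg)(\mathfrak{v}\cap\mathfrak{g}_{\mathbb{Z}})=\mathrm{Ad}(k_{g_1}a_{g_1}c)(\mathfrak{v}'\cap\mathfrak{g}_{\mathbb{Z}}),
\]
eliminating the a priori unbounded factor $\gamma'$.

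The main estimate now comes from the weight decomposition of $\mathfrak{v}'$ under $\boldsymbol{A_{P_I^\lambda}}$. By (\ref{Align: Lie algebra decomposition of parabolic 1}) applied to $\boldsymbol{P}_i^\lambda$, every weight $\chi$ occurring in $\mathfrak{v}'$ has the form $\chi=\sum_{k\in I}n_k\alpha_k^\lambda$ with $n_k\geq 0$ and $n_i\geq 1$. Since $\alpha_k^\lambda(a_{g_1})\in(0,\theta)\subset(0,1)$ for every $k\in I$ and $\alpha_i^\lambda(a_{g_1})<\epsilon_0(\eta)$, we get $\chi(a_{g_1})<\epsilon_0(\eta)$. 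By equivalence of norms this gives $\|\mathrm{Ad}(a_{g_1})Z\|\leq C_1\,\epsilon_0(\eta)\,\|Z\|$ for every $Z\in\mathfrak{v}'$, with $C_1$ depending only on the finitely many relevant tuples $(I,i,\lambda)$. Fixing an integral basis $\{Y'_j\}$ of $\mathfrak{v}'\cap\mathfrak{g}_{\mathbb{Z}}$ for each such tuple (with uniformly bounded norm), and using that $\mathrm{Ad}(k_{g_1})$ and $\mathrm{Ad}(c)$ have uniformly bounded operator norms (by compactness of $K$ and of $C$; here $\mathrm{Ad}(c)$ preserves $\mathfrak{v}'$ since $c\in P_i^\lambda$), I obtain
\[
\|\mathrm{Ad}(k_{g_1}a_{g_1}c)Y'_j\|\lesssim \epsilon_0(\eta)\xrightarrow{\eta\to 0}0,
\]
so for $\eta$ small enough every such vector lies in $W_0$.

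The step I expect to be the main obstacle is the ``$\gamma'$-disappears'' manipulation, which is really the heart of the argument. One must avoid tracking $\mathrm{Ad}(\gamma')Y'_j$ individually (since these may be integer combinations with unbounded entries) and instead work at the level of the lattice $\mathfrak{v}'\cap\mathfrak{g}_{\mathbb{Z}}$; it is crucial that $\gamma'$ lies both in $\Gamma$ (for $\mathfrak{g}_{\mathbb{Z}}$-preservation) and in $P_i^\lambda$ (for $\mathfrak{v}'$-preservation). A secondary subtlety is that $\mathrm{Ad}(c)$ for $c\in{^0P}_I^\lambda$ may mix $\boldsymbol{A_{P_I^\lambda}}$-weight spaces, so one must apply $\mathrm{Ad}(c)$ to the basis vector first (using only that it preserves $\mathfrak{v}'$) and only then invoke the scalar contraction bound coming from $a_{g_1}$.
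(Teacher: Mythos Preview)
Your proposal is correct and follows essentially the same route as the paper's proof: factor $mg_1=k_{g_1}a_{g_1}c\gamma'$, absorb $\gamma'$ into the integral lattice of the unipotent radical, use boundedness of $\mathrm{Ad}(c)$ and $\mathrm{Ad}(k_{g_1})$, and extract the contraction factor $\epsilon_0(\eta)$ from $a_{g_1}$ via the root decomposition. The only notable difference is bookkeeping: the paper conjugates back to the standard $\mathfrak{u}_i$ via $\lambda^{-1}$ and then compensates for the fact that $\mathrm{Ad}(\lambda)$ need not preserve $\mathfrak{g}_{\mathbb{Z}}$ by introducing a uniform denominator $N$, whereas you work directly with the lattice $\mathfrak{v}'\cap\mathfrak{g}_{\mathbb{Z}}$ (legitimate since $\boldsymbol{P}_i^\lambda$ is a $\mathbb{Q}$-parabolic) and thereby avoid the $N$-trick altogether; this is a mild streamlining but not a different argument.
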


The four conditions listed above say that $g_1 \in \Sigma^{M}_{I,i,\lambda,C}(\theta,\ep_0(\eta))$ and the $m$ implicit in the definition is exactly our $m$.

\begin{proof}
We write $\boldsymbol{P}=\boldsymbol{P}_I^{\lambda}$, $k_{g_1}=k_{g_1}(I,\lambda)$, $a_{g_1}=a_{g_1}(I,\lambda)$, and $p_{g_1}=p_{g_1}(I,\lambda)$ for short. By definition, for all $j\in I$, $\alpha_j(\lambda^{-1} a_{g_1} \lambda)<\theta<1$, and $\alpha_i(\lambda^{-1} a_{g_1} \lambda)<\epsilon_0(\eta)$. Choose a primitive integral basis $v_1,\cdots,v_{d_i}\in \mathfrak{g}_{\mathbb{Z}}$
of $\mathfrak{u}_i$, so  $\mathfrak{u}_i=\mathrm{Span}_{\mathbb{R}}\{v_j:1\leq j\leq d_i\}$. Since there are only finitely many standard maximal parabolic $\mathbb{Q}$-subgroups, $\norm{v_j}$ is bounded above uniformly for $1\leq j\leq d_i$.

Since $C$ is bounded and $F$ is a finite set, the operator norm of any element in $\cup_{\lambda \in F}\lambda^{-1} C \lambda$ on $\mathfrak{g}$ is bounded above uniformly.  By assumption, there is $\gamma_0\in \Gamma\cap {^0\!P}$ such that
\begin{align*}
    p_{g_1}\cdot g_1^{-1}mg_1\cdot \gamma_0=b \in C\cap {^0\!P}.
\end{align*}
Note that $\mathrm{Rad}_{\mathrm{U}}(P_i)$ is normal in $P_i$, and $\lambda^{-1} b \lambda\in P_I\subset P_i$.
There is some $c_1>0$ such that
for very $v_j$, there exist $c^j_1,\cdots,c^j_{d_i}\in \mathbb{R}$ such that $|c^j_k|<c_1$ for all $k$, and
\begin{align*}
    \mathrm{Ad}(\lambda^{-1} b \lambda)v_j=\sum_{k=1}^{d_i} c^j_{k} v_k.
\end{align*}
Therefore, for $j=1,\cdots,d_i$,
\begin{align*}
    \mathrm{Ad}(mg_1\gamma_0\lambda)v_j&=\mathrm{Ad}(g_1\cdot g_1^{-1}mg_1\gamma_0\lambda)v_j\\ \nonumber
    &=\mathrm{Ad}(k_{g_1} a_{g_1} p_{g_1} g_1^{-1}mg_1 \gamma_0\lambda)v_j\\ \nonumber
    &=\mathrm{Ad}(k_{g_1}\lambda)\mathrm{Ad}(\lambda^{-1} a_{g_1} \lambda )\mathrm{Ad}(\lambda^{-1} b\lambda)v_j\\ \nonumber
    &=\mathrm{Ad}(k_{g_1}\lambda)\mathrm{Ad}(\lambda^{-1} a_{g_1} \lambda )\sum_{k=1}^{d_i} c^j_{k} v_k. 
\end{align*}
By the description of the Lie algebra of $\boldsymbol{P}_i$ in (\ref{Align: Lie algebra decomposition of parabolic 1}) and (\ref{Align: Lie algebra decomposition of parabolic 2}), and the assumption of the Proposition, we have for each $k=1,\cdots,d_i$,
\begin{align*}
    \norm{\mathrm{Ad}(\lambda^{-1} a_{g_1} \lambda )v_k}<|\alpha_i(\lambda^{-1}a_{g_1}\lambda)|\norm{v_k}<\epsilon_0(\eta) \norm{v_k}.
\end{align*}
Let $\eta^{\prime}>0$ be such that the ball $W_{\eta^{\prime}}$ of radius $\eta^{\prime}$ centered at $0$ in $\mathfrak{g}$ satisfies $W_{\eta^{\prime}}\subset W_0$. By boundedness of $c^j_k$, compactness of $K$, and finiteness of $F$, choosing $\eta>0$ small enough (so $\epsilon_0(\eta)$ is small), we have for any $j=1,\cdots,d_i$,
\begin{align*}
    \norm{\mathrm{Ad}(mg_1\gamma_0\lambda)v_j}<\frac{\eta^{\prime}}{N},
\end{align*}
where $N<\infty$ is the smallest positive integer such that $\mathrm{Ad}(\lambda)N v\in \mathfrak{g}_{\mathbb{Z}}$ for any $v\in \mathfrak{g}_{\mathbb{Z}}$ and any $\lambda \in F$. Since $\gamma_0\in \Gamma$, by the choice of $N$, we have 
\begin{align*}
    &\mathrm{Ad}(\gamma_0 \lambda) Nv_j\in \mathfrak{g}_{\mathbb{Z}}, \text{ and }
    \norm{\mathrm{Ad}(mg_1 \gamma_0\lambda) N v_j}< \eta^{\prime},\text{ }  \forall j=1,\cdots,d_i.
\end{align*}
Therefore, $\mathrm{Ad}(m g_1\gamma_0 \lambda)N v_j\in \mathfrak{g}_{mg_1}\cap W_0$ for $1\leq j\leq d_i$.
Note that as $\gamma_0\in \lambda P_i \lambda^{-1}\cap \Gamma$, and $\mathfrak{u}_i$ is spanned by $v_j$'s, we then have
\begin{align*}
    \mathrm{Ad}(mg_1)\mathrm{Ad}(\lambda)\mathfrak{u}_i=\mathrm{Ad}(mg_1)\mathrm{Ad}(\gamma_0\lambda)\mathfrak{u}_i\subset \mathrm{Span}_{\mathbb{R}}(\mathfrak{g}_{mg_1}\cap W_0).
\end{align*}
Since $g_1=g\gamma$ and $\Gamma$ preserve $\mathfrak{g}_{\mathbb{Z}}$, we have $\mathfrak{g}_{mg_1}=\mathfrak{g}_{mg}$, and
\begin{align*}
  \mathrm{Ad}(mg)\mathfrak{v}=\mathrm{Ad}(mg)\mathrm{Ad}(\gamma \lambda)\mathfrak{u}_i=\mathrm{Ad}(mg_1)\mathrm{Ad}(\lambda)\mathfrak{u}_i \subset \mathrm{Span}_{\mathbb{R}}(\mathfrak{g}_{mg}\cap W_0).
\end{align*}
\end{proof}

\begin{corollary}\label{Corollary: If g in U_u^M, then Ad(mg)u is contained in span...}
Let $B,\theta,\eta_0,\epsilon_0$ be as in Proposition \ref{Proposition: a description of cusps when taking consideration of M}. For $i\in \{1,\cdots,r\}$, $\lambda\in F$ and $\gamma\in \Gamma$, let $\boldsymbol{Q}=\gamma \boldsymbol{P}_i^{\lambda}\gamma^{-1}$ be a maximal parabolic $\mathbb{Q}$-subgroup with $\mathfrak{v}=\mathrm{Lie}(\mathrm{Rad}_{\mathrm{U}}(\boldsymbol{Q}))$. Then for any $I\subset \{1,\cdots,r\}$ with $i\in I$, any $0<\eta<\eta_0$ sufficiently small, and any $g\in \Sigma^M_{I,i,\lambda,B}(\theta,\epsilon_0(\eta))\cdot \gamma^{-1}$, there exists $m\in M$ such that
\begin{align}\label{align: consequence of g in U_u^M}
   \mathrm{Ad}(mg)\mathfrak{v}\subset \mathrm{Span}_{\mathbb{R}}(\mathfrak{g}_{mg}\cap W_0),
\end{align}
where $W_0$ is a Zassenhaus neighborhood as in Proposition \ref{Proposition: existence of Zassenhauss neighborhood}. In particular, if $g\in U_{\mathfrak{v}}^M(\epsilon_0(\eta))$, then there exists $m\in M$ such that (\ref{align: consequence of g in U_u^M}) holds.
\end{corollary}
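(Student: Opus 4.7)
The plan is to obtain this corollary as an essentially immediate consequence of Proposition \ref{Proposition: there is m in M such that Ad(mg)v is in W_0}, after unwinding the definitions \eqref{align: definition of Sigma when specifying a simple root} and \eqref{align: definition of U_mathfrak{v}}. All the genuine analytic content — the use of the Langlands decomposition, the boundedness of $\mathrm{Ad}(\lambda^{-1}b\lambda)$ on $\mathfrak{u}_i$, the choice of a common denominator $N$, and the fact that $\epsilon_0(\eta) \to 0$ forces the relevant wedge of root vectors into the Zassenhaus neighborhood — is already packaged in that proposition, so here I only need to match hypotheses.

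First I would take $g\in \Sigma_{I,i,\lambda,B}^M(\theta,\epsilon_0(\eta))\cdot \gamma^{-1}$ and write $g = g_1\gamma^{-1}$ with $g_1 \in \Sigma_{I,i,\lambda,B}^M(\theta,\epsilon_0(\eta))$, so $g_1 = g\gamma$. Reading off the definition of $\Sigma_{I,i,\lambda,B}^M(\theta,\epsilon_0(\eta))$ in \eqref{align: definition of Sigma when specifying a simple root} together with that of $\Sigma_{I,\lambda,B}^M(\theta)$, the element $g_1$ satisfies exactly the four bulleted hypotheses of Proposition \ref{Proposition: there is m in M such that Ad(mg)v is in W_0} with the compact set $C$ taken to be $B$: namely $g_1^{-1}Mg_1 \subset P_I^{\lambda}$; $\alpha(a_{g_1}(I,\lambda))<\theta$ for every $\alpha\in \Delta_{P_I^{\lambda}}$; $\alpha_i^{\lambda}(a_{g_1}(I,\lambda))<\epsilon_0(\eta)$; and the existence of $m\in M$ with $p_{g_1}(I,\lambda)\cdot g_1^{-1}mg_1 \in (B\cap {}^0P_I^{\lambda})\cdot(\Gamma\cap {}^0P_I^{\lambda})$. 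Applying Proposition \ref{Proposition: there is m in M such that Ad(mg)v is in W_0} to this data, and requiring $\eta$ to be small enough to satisfy the hypothesis of that proposition relative to the fixed compact set $B$, produces the desired $m\in M$ with $\mathrm{Ad}(mg)\mathfrak{v}\subset \mathrm{Span}_{\mathbb{R}}(\mathfrak{g}_{mg}\cap W_0)$.

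For the final ``in particular'' clause, the definition \eqref{align: definition of U_mathfrak{v}} expresses $U_{\mathfrak{v}}^M(\epsilon_0(\eta))$ as the finite union over $I\ni i$ of the sets $\Sigma_{I,i,\lambda,B}^M(\theta,\epsilon_0(\eta))\cdot\gamma^{-1}$ (the triple $(i,\lambda,\gamma)$ being the one fixed by the choice $\boldsymbol{Q}=\gamma\boldsymbol{P}_i^{\lambda}\gamma^{-1}$). Hence any $g\in U_{\mathfrak{v}}^M(\epsilon_0(\eta))$ lies in at least one such piece, and the first part of the corollary applies verbatim. I do not foresee any obstacle here: the only small point to monitor is that the threshold $\eta$ provided by Proposition \ref{Proposition: there is m in M such that Ad(mg)v is in W_0} depends on the compact set $B$ but not on $I$, $\lambda$, $i$, or $\gamma$, so one uniform choice of $\eta$ works across the finite union that defines $U_{\mathfrak{v}}^M(\epsilon_0(\eta))$.
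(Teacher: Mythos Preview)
Your proposal is correct and follows essentially the same approach as the paper: both arguments set $g_1=g\gamma$, observe that the definition of $\Sigma^M_{I,i,\lambda,B}(\theta,\epsilon_0(\eta))$ immediately supplies all four hypotheses of Proposition \ref{Proposition: there is m in M such that Ad(mg)v is in W_0} with $C=B$, and then invoke that proposition. Your version is simply more explicit about matching the bullets and about handling the ``in particular'' clause via the union in \eqref{align: definition of U_mathfrak{v}}, but there is no substantive difference.
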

\begin{proof}
By assumption, $g\gamma \in \Sigma^M_{I,i,\lambda,B}(\theta,\epsilon_0(\eta))$. By definition of $\Sigma^M_{I,i,\lambda,B}(\theta,\epsilon_0(\eta))$, there is $m\in M$ such that $g_1=g\gamma$ and $m$ satisfy all the assumptions of Proposition \ref{Proposition: there is m in M such that Ad(mg)v is in W_0} with $B$ in place of $C$. Therefore, the corollary follows.
\end{proof}

The following proposition will be useful when we apply Theorem \ref{Theorem: a covering theorem} to a certain topological cover of the $\mathbb{R}$-diagonalizable subgroup $A$ constructed later on:
\begin{proposition}\label{Proposition: when M is nontrivial, if the intersection is nonempty, the span is again unipotent}
Given a positive integer $n\leq r$, where $r= \mathrm{rank}_{\mathbb{Q}}(\boldsymbol{G})$, there is a sufficiently small $\eta>0$ such that the following holds: For any integer $1\leq k\leq n$, and $k$ maximal parabolic $\mathbb{Q}$-subgroups $\boldsymbol{Q}_1,\cdots,\boldsymbol{Q}_k$ whose unipotent radicals have $\mathfrak{v}_1,\cdots,\mathfrak{v}_k$ as their Lie algebras, if $\bigcap_{i=1}^k U^M_{\mathfrak{v}_i}(\epsilon_0(\eta))\neq\emptyset$, then the Lie subalgebra generated by $\mathrm{Span}_{\mathbb{R}}\{\mathfrak{v}_i:i=1,\cdots,k\}$ is unipotent.
\end{proposition}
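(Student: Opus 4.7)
The plan is to reduce the problem, via the $M$-invariance structure built into the sets $U^M_{\mathfrak{v}}$, to a single application of Proposition~\ref{Proposition: existence of Zassenhauss neighborhood} at a well-chosen base point. Pick $g\in \bigcap_{i=1}^k U^M_{\mathfrak{v}_i}(\epsilon_0(\eta))$. By Corollary~\ref{Corollary: If g in U_u^M, then Ad(mg)u is contained in span...}, for each $i$ there exists $m_i\in M$ with $\mathrm{Ad}(m_i g)\mathfrak{v}_i\subset \mathrm{Span}_{\R}(\mathfrak{g}_{m_i g}\cap W_0)$. The key structural observation is that the defining conditions of $\Sigma^M_{I_i,j_i,\lambda_i,B}$, read through the definition of $U^M_{\mathfrak{v}_i}$, force $g^{-1}\boldsymbol{M}g\subset \boldsymbol{Q_i}$, the maximal $\Q$-parabolic with $\mathrm{Lie}(\mathrm{Rad}_{\mathrm{U}}\boldsymbol{Q_i})=\mathfrak{v}_i$. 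Hence $\boldsymbol{M}$ normalizes $\mathrm{Rad}_{\mathrm{U}}(g\boldsymbol{Q_i}g^{-1})$, and since $\boldsymbol{M}$ is semisimple the character $m\mapsto \det(\mathrm{Ad}(m)|_{\mathrm{Ad}(g)\mathfrak{v}_i})$ is trivial on $M$. Consequently the subspace $\mathfrak{w}_i:=\mathrm{Ad}(g)\mathfrak{v}_i=\mathrm{Ad}(mg)\mathfrak{v}_i$ and the wedge $\mathrm{Ad}(g)p_{\mathfrak{v}_i}=\mathrm{Ad}(mg)p_{\mathfrak{v}_i}$ are both independent of $m\in M$; tracking the constants in the proof of Proposition~\ref{Proposition: there is m in M such that Ad(mg)v is in W_0} shows that $\norm{\mathrm{Ad}(g)p_{\mathfrak{v}_i}}\to 0$ as $\eta\to 0$.

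The next step is to produce a single $m\in M$ for which $\mathrm{Ad}(mg)\mathfrak{v}_i\subset \mathrm{Span}_{\R}(\mathfrak{g}_{mg}\cap W_0)$ for every $i=1,\ldots,k$ simultaneously. Once this is achieved, Proposition~\ref{Proposition: existence of Zassenhauss neighborhood} applied at the single point $mg$ shows that the Lie subalgebra generated by $\mathrm{Span}_{\R}(\mathfrak{g}_{mg}\cap W_0)$ is unipotent; this subalgebra then contains $\sum_i\mathrm{Ad}(mg)\mathfrak{v}_i=\sum_i\mathfrak{w}_i=\mathrm{Ad}(g)\bigl(\sum_i\mathfrak{v}_i\bigr)$, and conjugating by $\mathrm{Ad}(g^{-1})$ yields unipotency of the Lie subalgebra generated by $\mathrm{Span}_{\R}\{\mathfrak{v}_i:i=1,\ldots,k\}$, as desired.

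The main obstacle is therefore to locate such a common $m\in M$: individually, each $m_i$ shrinks the lattice $\mathfrak{g}_{m_i g}\cap \mathfrak{w}_i$, but these lattices live at different basepoints. The approach is to take $m=m_1$ and, for each $i>1$, exploit the $\mathrm{Ad}(M)$-invariance of $\mathfrak{w}_i$ to transport the explicit short spanning system of $\mathfrak{g}_{m_i g}\cap W_0\cap\mathfrak{w}_i$ (furnished by the proof of Proposition~\ref{Proposition: there is m in M such that Ad(mg)v is in W_0}) by $\mathrm{Ad}(m_1 m_i^{-1})$ into a spanning system of $\mathfrak{w}_i$ sitting inside $\mathfrak{g}_{m_1 g}\cap \mathfrak{w}_i$. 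The determinant-$1$ observation guarantees that the transported lattice has the same tiny covolume in $\mathfrak{w}_i$; using the uniform control over the finitely many data $(j,\lambda,\gamma)$ parametrizing the covering by $U^M_{\mathfrak{v}}$, the bound $k\le n\le r$, and the Dani--Margulis-based quantitative nondivergence underlying the choice of $\epsilon_0(\eta)$ (cf.\ Proposition~\ref{Proposition: a description of cusps when taking consideration of M}), one fixes $\eta>0$ small enough that all such transported spanning vectors remain in $W_0$. This uniform choice of $\eta$ (depending only on $n$) is the hard part of the argument.
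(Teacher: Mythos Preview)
Your reduction is correct up to the point where you need a \emph{single} $m\in M$ such that $\mathrm{Ad}(mg)\mathfrak{v}_i\subset \mathrm{Span}_{\R}(\mathfrak{g}_{mg}\cap W_0)$ for all $i$ simultaneously; this is indeed the crux. But the mechanism you propose for producing it does not work. You take $m=m_1$ and transport the short spanning vectors of $\mathfrak{g}_{m_ig}\cap\mathfrak{w}_i$ by $\mathrm{Ad}(m_1 m_i^{-1})$. It is true that this map preserves $\mathfrak{w}_i$ and has determinant $1$ there, so the covolume of $\mathfrak{g}_{m_1g}\cap\mathfrak{w}_i$ in $\mathfrak{w}_i$ equals that of $\mathfrak{g}_{m_ig}\cap\mathfrak{w}_i$. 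However, small covolume does not imply the existence of a spanning set of short vectors, and there is no a priori bound on $m_1 m_i^{-1}$: the elements $m_i$ come from the definition of $\Sigma^M_{I_i,j_i,\lambda_i,B}$ and depend on $g$ and on an uncontrolled $\gamma_i\in\Gamma$, so the transporting element can be arbitrarily large. Your appeal to ``uniform control over the finitely many data $(j,\lambda,\gamma)$'' overlooks that $\gamma$ ranges over the infinite set $\Gamma$, and the reference to Proposition~\ref{Proposition: a description of cusps when taking consideration of M} supplies no bound on $m_i$. So the final sentence is an assertion, not an argument.

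The paper closes this gap by a genuinely different, dynamical idea: rather than fixing $m_1$ and transporting, it manufactures a common $m$ directly. One passes (via Howe--Moore) to a single one-parameter unipotent subgroup $\{u(t)\}\subset M$ whose forward orbit is dense in the $M$-orbit, and then applies the Dani--Margulis quantitative nondivergence theorem inside each $^0P_i'/\Lambda_i$ to show that, for each $i$, the set of times $t$ for which $p_i g_i^{-1}u(t)g_i\Lambda_i$ lies in a fixed compact set has density $>1-\epsilon$ on long intervals. Choosing $\epsilon<1/k$, these $k$ large-density sets must intersect (Lemma~\ref{Lemma: intersection of certain subsets is nonempty}), yielding a single time $t_0$ with $m=u(t_0)$ satisfying the hypotheses of Proposition~\ref{Proposition: there is m in M such that Ad(mg)v is in W_0} for all $i$ at once. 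This is where the ``sufficiently small $\eta$'' is actually used: it is calibrated to the enlarged compact set $C$ (coming from the $C_P$'s), not to any putative bound on $m_1 m_i^{-1}$.
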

To prove Proposition \ref{Proposition: when M is nontrivial, if the intersection is nonempty, the span is again unipotent}, we invoke the following fundamental result of Dani and Margulis \cite{Dani_Margulis_1991_Asymptotic_behaviour_of_trajectories_of_unipotent_flows_on_homogeneous_spaces_MR1101994}:

\begin{theorem}\label{Theorem: quantitative nondivergence of unipotent orbits}\cite[Theorem 2]{Dani_Margulis_1991_Asymptotic_behaviour_of_trajectories_of_unipotent_flows_on_homogeneous_spaces_MR1101994}
Let $\boldsymbol{L}$ be a connected linear algebraic group defined over $\mathbb{Q}$ without nontrivial $\mathbb{Q}$-characters.
Let $\Gamma$ be an arithmetic subgroup of $L$. Then for any $\epsilon>0$ and any compact subset $B$ of $L/\Gamma$, there exists a compact set $C$ of $L/\Gamma$ such that for any unipotent one-parameter subgroup $\{u(t):t\in\mathbb{R}\}$ of $L$ and $g\in L$, if $g\Gamma/\Gamma\in B$, then for all large $T>0$, 
\begin{align*}
    \frac{1}{T}|\{t\in [0,T]:u(t)g\Gamma\in C\}|>1-\epsilon,
\end{align*}
where $|\cdot|$ denotes the Lebesgue measure of a measurable set.
\end{theorem}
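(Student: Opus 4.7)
The plan is to follow the classical Dani--Margulis strategy, which combines two ingredients: a representation-theoretic description of the cusps of $L/\Gamma$, and the fact that matrix coefficients of unipotent flows along algebraic representations are polynomials in $t$, for which smallness on a large set propagates to smallness on the whole interval.

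The first step is the cusp description. Since $\boldsymbol{L}$ has no nontrivial $\mathbb{Q}$-characters, reduction theory produces a Siegel-type description of $L/\Gamma$, and the ``deep cusp'' regions can be parametrised by the $\Gamma$-orbits of the proper maximal parabolic $\mathbb{Q}$-subgroups of $\boldsymbol{L}$. For each of the finitely many $\Gamma$-conjugacy classes of such $\boldsymbol{P}$, one fixes an irreducible $\mathbb{Q}$-representation $\rho_{P}\colon \boldsymbol{L}\to \mathrm{GL}(\boldsymbol{V}_{P})$ and a primitive highest-weight vector $v_{P}\in \boldsymbol{V}_{P}(\mathbb{Q})$ whose stabiliser is $\boldsymbol{P}$. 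Since $v_{P}$ is rational and $\Gamma$ is arithmetic, $\rho_{P}(\Gamma)v_{P}$ is a discrete subset of $V_{P}$. A Mahler-type criterion then asserts that the sets
\begin{equation*}
C_{\delta}:=\bigl\{g\Gamma\in L/\Gamma\;:\; \|\rho_{P}(g\gamma)v_{P}\|\ge \delta \text{ for every such }P \text{ and every }\gamma\in\Gamma\bigr\}
\end{equation*}
are compact for each $\delta>0$ and exhaust $L/\Gamma$ as $\delta\to 0$. In particular, the hypothesis $g\Gamma\in B$ gives a $\delta_{0}>0$ with $B\subset C_{\delta_{0}}$, and our task reduces to producing $\delta_{1}>0$ such that $u(t)g\Gamma\in C_{\delta_{1}}$ on a subset of $[0,T]$ of relative measure at least $1-\epsilon$.

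The second step is the polynomial input. For a fixed unipotent one-parameter subgroup $\{u(t)\}$ and fixed $g,\gamma,P$, the map $t\mapsto \rho_{P}(u(t)g\gamma)v_{P}$ is a polynomial map $\mathbb{R}\to V_{P}$ of degree at most a constant $d$ depending only on $\rho_{P}$, and hence uniformly bounded across the finitely many conjugacy classes. The elementary but crucial polynomial anti-concentration estimate states that for any real polynomial $p$ of degree $\le d$ and any interval $I$,
\begin{equation*}
\bigl|\{t\in I:|p(t)|<\delta\}\bigr|\;\le\; 2d\Bigl(\tfrac{\delta}{\sup_{t\in I}|p(t)|}\Bigr)^{1/d}|I|.
\end{equation*}
Consequently, if $\|\rho_{P}(u(t)g\gamma)v_{P}\|<\delta_{1}$ on a set of $t\in[0,T]$ of measure more than $\epsilon T/N$ (for a suitable counting parameter $N$), then the supremum of this polynomial norm on $[0,T]$ is forced to be comparable with $\delta_{1}$; this is the mechanism that transfers control at a single time to control on the whole interval.

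The main obstacle, and the genuinely subtle part of the Dani--Margulis proof, is that while our hypothesis controls $\|\rho_{P}(g\gamma)v_{P}\|$ at $t=0$ uniformly in $(P,\gamma)$, the pair $(P,\gamma)$ which becomes ``dangerous'' at some later time may differ, and there are infinitely many $\gamma\in\Gamma$ to contend with, so a naive union bound fails. The resolution is a delicate combinatorial covering argument of Margulis: one performs induction on the $\mathbb{Q}$-rank of $\boldsymbol{L}$, and shows by a geometric argument on the Siegel set that on any interval $[0,T]$ only finitely many $(P,\gamma)$ can compete, arranged so that an ``inner'' and ``outer'' pair of thresholds $\delta_{1}\ll\delta_{0}$ can be chosen with the polynomial estimate applied successively from the deepest cusp outwards. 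The output is a $\delta_{1}=\delta_{1}(\epsilon,B)$ valid uniformly in $u$ and $g$, so that $C=C_{\delta_{1}}$ does the job. The two preceding ingredients are routine modulo this covering step; it is here that the hypothesis ``no nontrivial $\mathbb{Q}$-characters'' is essential, ensuring that every $\mathbb{Q}$-parabolic is proper in the reductive sense required for the inductive scheme.
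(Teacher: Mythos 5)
The paper does not reprove Dani--Margulis; it reduces the stated theorem to the cited result by a two-line argument. Write $\boldsymbol{L}=\boldsymbol{H}\cdot\boldsymbol{R}$ with $\boldsymbol{H}$ semisimple and $\boldsymbol{R}$ the solvable radical, both defined over $\Q$. The hypothesis that $\boldsymbol{L}$ has no nontrivial $\Q$-characters forces the quotient of $\boldsymbol{R}$ by its unipotent radical to be a $\Q$-anisotropic torus, whence $R/(R\cap\Gamma)$ is compact and the natural map $L/\Gamma\to \overline{L}/\overline{\Gamma}$ is proper, where $\overline{L}=L/R$ is semisimple. The Dani--Margulis theorem as cited applies to the semisimple quotient $\overline{L}$, and the compact set produced there pulls back along the proper projection to give the required compact set $C$ in $L/\Gamma$. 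That is the whole proof.

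Your proposal instead attempts to re-derive the Dani--Margulis theorem itself. The three ingredients you identify --- representation-theoretic cusp description via highest-weight vectors, the polynomial anticoncentration bound, and the Margulis covering/induction on $\Q$-rank --- are indeed the pillars of the original proof, and your outline of them is accurate. But you explicitly defer the covering step (``The two preceding ingredients are routine modulo this covering step''), and that step is where all the actual difficulty lives; as written, the proposal is a roadmap of the argument rather than a proof. There is also a structural mismatch: your cusp description is phrased in terms of highest-weight vectors for irreducible $\Q$-representations with parabolic stabilisers, which is the reductive-group picture. For a connected $\boldsymbol{L}$ that is not reductive (it may have a large unipotent radical) this does not apply directly, and one must first account for the solvable radical. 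The clean way to do this --- which the paper uses and your proposal omits --- is to quotient by $\boldsymbol{R}$ and invoke properness of $L/\Gamma\to\overline{L}/\overline{\Gamma}$; this single observation replaces all of the machinery you sketch.
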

\begin{proof}
Write $\boldsymbol{L}=\boldsymbol{H}\cdot \boldsymbol{R}$ where $\boldsymbol{H}$ is semisimple and $\boldsymbol{R}$ is the solvable radical of $\boldsymbol{L}$.
Both $\boldsymbol{H}$ and  $\boldsymbol{R}$ are defined over $\mathbb{Q}$.
By assumption the quotient of $\boldsymbol{R}$ by its unipotent radical is a $\mathbb{Q}$-anisotropic torus.
\cite[Theorem 2]{Dani_Margulis_1991_Asymptotic_behaviour_of_trajectories_of_unipotent_flows_on_homogeneous_spaces_MR1101994} implies that the above theorem holds for $\boldsymbol{\overline{L}}:=\boldsymbol{L}/\boldsymbol{R}$. 
Let $\pi:L\to L/R$ be the natural quotient map.
Since the natural projection map $L/\Gamma\to \pi(L)/\pi(\Gamma)$ is proper, we are done.
\end{proof}
We also require the following
\begin{lemma}\label{Lemma: intersection of certain subsets is nonempty}
Let $I\subset \mathbb{R}$ be a nonempty bounded (open or closed) interval, $k$ be a positive integer, and $I_1,\cdots,I_k$ be measurable subsets of $I$. If there exists $0<\epsilon<\frac{1}{k}$ such that $|I_i|>(1-\epsilon)|I|$ for each $i=1,\cdots,k$, then $\bigcap_{i=1}^k I_i\neq \emptyset$. Here $|\cdot|$ denotes the Lebesgue measure on $\mathbb{R}$.
\end{lemma}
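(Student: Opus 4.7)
The plan is to pass to complements and apply subadditivity of Lebesgue measure. Setting $I_i^c := I \setminus I_i$, the hypothesis $|I_i| > (1-\epsilon)|I|$ is equivalent to $|I_i^c| < \epsilon |I|$ for each $i = 1, \dots, k$.

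Next I would estimate the measure of $I \setminus \bigcap_{i=1}^k I_i = \bigcup_{i=1}^k I_i^c$ using the countable (here finite) subadditivity of Lebesgue measure:
\[
\left| I \setminus \bigcap_{i=1}^k I_i \right| = \left| \bigcup_{i=1}^k I_i^c \right| \leq \sum_{i=1}^k |I_i^c| < k \epsilon |I|.
\]
Since $\epsilon < 1/k$, we have $k\epsilon < 1$, so
\[
\left| \bigcap_{i=1}^k I_i \right| = |I| - \left| I \setminus \bigcap_{i=1}^k I_i \right| > (1 - k\epsilon)|I| > 0.
\]
In particular, the intersection has positive Lebesgue measure and is therefore nonempty, which is exactly the conclusion.

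There is no real obstacle here; the lemma is essentially a union bound plus the elementary observation that a set of positive measure cannot be empty. The only step to watch is the strict inequality $\epsilon < 1/k$ (as opposed to $\leq$), which is precisely what is needed to conclude $(1-k\epsilon)|I| > 0$ rather than merely $\geq 0$. The hypothesis that $I$ is a bounded interval is used only to ensure $|I|$ is a finite positive number, so that the complement $I \setminus I_i$ is well-defined and has finite measure; the same argument works for any measurable set of finite positive measure in place of $I$.
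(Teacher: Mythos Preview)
Your proof is correct and yields the same quantitative bound $|\bigcap_{i=1}^k I_i| > (1-k\epsilon)|I|$ as the paper. The paper's argument is essentially the same idea, just presented via induction on $j$ using inclusion--exclusion ($|J_j \cup I_{j+1}| = |J_j| + |I_{j+1}| - |J_j \cap I_{j+1}| \leq |I|$) rather than invoking subadditivity of the complements in one stroke; your version is slightly more streamlined but the content is identical.
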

\begin{proof}
Without loss of generality, we may assume that $|I|=1$. We will use induction to show that for any $1\leq j\leq k$, $|\bigcap_{i=1}^j I_i|>1-j\epsilon$. Since $0<\epsilon<\frac{1}{k}$, we have $|\bigcap_{i=1}^k I_i|>0$. In particular, $\bigcap_{i=1}^k I_i\neq \emptyset$.

When $j=1$, by assumption we have $|I_1|>1-\epsilon$. Suppose that for some $1\leq j\leq k-1$, $|\bigcap_{i=1}^j I_i|>1-j\epsilon$. Let $J_j=\bigcap_{i=1}^j I_i$. Then we have
\begin{align*}
    1\geq |J_j \cup I_{j+1}|&=|J_j|+|I_{j+1}|-|J_j\cap I_{j+1}|
    >1-j\epsilon +1-\epsilon-|J_j\cap I_{j+1}|.
\end{align*}
Therefore, $|\bigcap_{i=1}^{j+1}I_i|=|J_i\cap I_{j+1}|>1-(j+1)\epsilon$.
\end{proof}

\begin{proof}[Proof of Proposition \ref{Proposition: when M is nontrivial, if the intersection is nonempty, the span is again unipotent}]
Fix a positive number $\epsilon<\frac{1}{2r}$ and a compact neighborhood $C_1$ of $\mathrm{id}$ in $G$. 

Recall from (\ref{Align: finite collection of parabolic subgroups}) that $ \mathcal{B}=\{\lambda \boldsymbol{P}_I \lambda^{-1}: I\subset \Delta_{\mathbb{Q}},\lambda\in F\}$.
For any $\boldsymbol{P}\in \mathcal{B}$, denote $\Lambda_P=\Gamma\cap {^0\!P}$, and $B_P=(C_1\cdot B)\cap {^0\!P}$, 
where $B$ is the bounded set of $G$ as in Proposition \ref{Proposition: a description of cusps when taking consideration of M}. Since $\boldsymbol{^0\!P}$ has no nontrivial $\mathbb{Q}$-characters, $\Lambda_P$ is a lattice in $^0\!P$. We denote by $\pi_P:{^0\!P} \to {^0\!P}/\Lambda_P$ the natural projection map. Note that $\boldsymbol{^0\!P}=\boldsymbol{H}_P\cdot \boldsymbol{N}_P$, where $\boldsymbol{H}_P$ is a semisimple $\mathbb{Q}$-algebraic group and $\boldsymbol{N}_P$ is the unipotent radical of $\boldsymbol{^0\!P}$.

For each $\boldsymbol{P}\in \mathcal{B}$, applying Theorem \ref{Theorem: quantitative nondivergence of unipotent orbits} to $^0\!P / \Lambda_P$, $\epsilon$ and the compact set $B_P\Lambda_P/\Lambda_P$, we obtain a compact subset $C_P\subset {^0\!P}/\Lambda_P$ such that for any $p\in {^0\!P}$, any one-parameter unipotent subgroup $\{u_P(t):t\in\mathbb{R}\}$ of $^0\!P$, if $p\Lambda_P/\Lambda_P\in B_P\Lambda_P/\Lambda_P$, then for all large $T>0$,
\begin{align*}
    \frac{1}{T}|\{t\in [0,T]:u_P(t)p\Lambda_P/\Lambda_P\in C_P\}|>1-\frac{\epsilon}{2}.
\end{align*}
Since the cardinality of $\mathcal{B}$ is finite, we can choose a bounded set $C\subset G$ such that for any $\boldsymbol{P}\in \mathcal{B}$, the compact set $C_P$ obtained above satisfies $C_P\subset \pi_P(C\cap {^0\!P})$. We fix this bounded set $C$ for the rest of the proof, and choose $\eta>0$ to be sufficiently small such that Proposition \ref{Proposition: there is m in M such that Ad(mg)v is in W_0} holds.

Let $g\in \bigcap_{i=1}^k U^M_{\mathfrak{v}_i}(\epsilon_0(\eta))$. By definition of $U^M_{\mathfrak{v}_i}(\epsilon_0(\eta))$, for each $1\leq i\leq k$, there exist $j_i\in J_i\subset \{1,\cdots,r\}$, $\lambda_i\in F$, and $\gamma_i\in \Gamma$ such that $g\in\Sigma^M_{J_i,j_i,\lambda_i,B}(\theta,\epsilon_0(\eta))\cdot \gamma_i^{-1}$ and $\boldsymbol{Q}_i=\gamma_i \lambda_i \boldsymbol{P}_{j_i}\lambda_i^{-1}\gamma_i^{-1}$. To simplify our notations, for each $1\leq i\leq k$ we write $\boldsymbol{P}_i^{\prime}=\lambda_i \boldsymbol{P}_{j_i}\lambda_i^{-1}$, then $\boldsymbol{P}_i^{\prime}\in \mathcal{B}$. We also denote $B_i=B_{P_i^{\prime}}$, $C_i=C_{P_i^{\prime}}$, $\Lambda_i=\Gamma\cap {^0\!P_i^{\prime}}$, and $g_i=g\gamma_i$, for $1\leq i\leq k$.

By Howe-Moore ergodic theorem (see e.g. \cite{Zimmer_1984_Ergodic_theory_and_semisimple_groups_MR776417}), we can find a one-parameter unipotent subgroup $\{u(t):t\in \mathbb{R}\}\subset M$ such that \begin{align*}
    \overline{g^{-1}M g\Gamma/\Gamma}=\overline{g^{-1}\{u(t):t\in \mathbb{R}_+\}g\Gamma/\Gamma}.
\end{align*}
Since $\gamma_i^{-1}\overline{g^{-1}Mg\Gamma/\Gamma}=\overline{g_i^{-1}M g_i\Gamma/\Gamma}$, we have for each $1\leq i\leq k$,
\begin{align*}
    \overline{g_i^{-1}M g_i\Gamma/\Gamma}=\overline{g_i^{-1}\{u(t):t\in \mathbb{R}_+\}g_i\Gamma/\Gamma}.
\end{align*}
Note that for each $i$, $g_i^{-1}M g_i\subset {^0\!P_i^{\prime}}$.  Since $^0\!P_i^{\prime}\Gamma/\Gamma$ is closed, the natural map $^0\!P_i^{\prime}/\Lambda_i\to {^0\!P_i^{\prime}}\Gamma/\Gamma$ is proper (see e.g. \cite[Theorem 1.13]{Raghunathan_1972_Discrete_subgroups_of_Lie_groups_MR0507234}). Therefore, we have for each $1\leq i\leq k$,
\begin{align}\label{align: closure descends to smaller homogeneous spaces}
    \overline{g_i^{-1}M g_i\Lambda_i/\Lambda_i}=\overline{g_i^{-1}\{u(t):t\in \mathbb{R}_+\}g_i\Lambda_i/\Lambda_i}.
\end{align}

We may write $g_i=k_i a_i p_i$ with respect to the decomposition ${G=KP_i^{\prime}}$. 
Since $g_i\in \Sigma^M_{J_i,j_i,\lambda_i,B}(\theta,\epsilon_0(\eta))$, by definition we can find $m_i\in M$ such that  $p_i g_i^{-1} m_i g_i\in (B\cap {^0\!P^{\prime}_i})\cdot (\Gamma\cap {^0\!P_i^{\prime}})$. Depending on $g$ and $\boldsymbol{P}^{\prime}_i,i=1,\cdots,k$, we can choose $C_0$ to be a small enough neighborhood of $\mathrm{id}$ in $G$ such that for any $h\in C_0$ and $i=1,\cdots,k$, we have $p_i h p_i^{-1}\in C_1$. By (\ref{align: closure descends to smaller homogeneous spaces}), we find $t_i\geq 0$ and $h_i\in C_0\cap {^0\!P_i^{\prime}}$ such that 
\begin{align*}
    g_i^{-1} u(t_i) g_i \Lambda_i/\Lambda_i= h_i g_i^{-1} m_i g_i \Lambda_i/\Lambda_i.
\end{align*}
Then by the definition of $B_i$,
\begin{align*}
    p_i g_i^{-1} u(t_i) g_i \Lambda_i/\Lambda_i=p_i h_i p_i^{-1} p_i g_i^{-1} m_i g_i\Lambda_i/\Lambda_i\in  B_i \Lambda_i/\Lambda_i.
\end{align*}
Let $u_i(t)=p_i g_i^{-1} u(t) g_i p_i^{-1}$ be a one-parameter unipotent subgroup for $i=1,\cdots,k$. Then by the above, we have 
\begin{align*}
    u_i(t_i)p_i\Lambda_i/\Lambda_i\in   B_i \Lambda_i/\Lambda_i.
\end{align*}
By Theorem \ref{Theorem: quantitative nondivergence of unipotent orbits}, for any $i=1,\cdots,k$, for all large $T>0$,
\begin{align*}
    \frac{1}{T}|\{t\in [t_i,t_i+T]: u_i(t)p_i\Lambda_i\in C_i\}|>1-\frac{\epsilon}{2}.
\end{align*}
Without loss of generality, we may assume that $0\leq t_1\leq t_2\leq \cdots \leq t_k$. Then for any $i=1,\cdots,k$, we have for all large $T>0$,
\begin{align}\label{align: estimate of quantitative nondivergence of unipotent orbits}
    &\frac{1}{T+t_1-t_k}|\{t\in [t_k,t_1+T]:u_i(t)p_i\Lambda_i\in C_i\}|\nonumber \\
    &\geq \frac{T}{T+t_1-t_k}(\frac{1}{T}|\{t\in [t_i,t_i+T]:u_i(t)p_i\Lambda_i\in C_i\}|-\frac{t_k-t_1}{T})\nonumber\\
    &\geq \frac{T}{T+t_1-t_k}( 1-\frac{\epsilon}{2}-\frac{\epsilon}{10})\nonumber\\
    &\geq 1-\epsilon,
\end{align}
where we choose $T$ large enough such that $(t_k-t_1)/T<\epsilon/10$, and $\frac{T}{T+t_1-t_k}( 1-\frac{\epsilon}{2}-\frac{\epsilon}{10})\geq 1-\epsilon$.

Fix a large enough $T_0>0$ such that the estimate (\ref{align: estimate of quantitative nondivergence of unipotent orbits}) holds for each $i=1,\cdots,k$, and $t_1+T_0>t_k$. Let $I=[t_k,t_1+T_0]$ and
\begin{align*}
    I_i=\{t\in [t_k,t_1+T_0]:u_i(t)p_i\Lambda_i\in C_i\}.
\end{align*}
Applying Lemma \ref{Lemma: intersection of certain subsets is nonempty} to $I$,$I_1,\cdots,I_k$ and $\epsilon$, by the choice of $\epsilon$, we have $\bigcap_{i=1}^k I_i\neq \emptyset$. Let $t_0\in \bigcap_{i=1}^k I_i$. Then for $i=1,\cdots,k$,
\begin{align}\label{align: same u(t_0) bring it to a compact set}
    p_i g_i^{-1} u(t_0)g_i\Lambda_i \in C_i.
\end{align}
By definition of $g_i$, $C_i$ and (\ref{align: same u(t_0) bring it to a compact set}), it is clear that $g_i$ and $u(t_0)$ satisfies the assumptions of Proposition \ref{Proposition: there is m in M such that Ad(mg)v is in W_0}. By our choice of $\eta$, Proposition \ref{Proposition: there is m in M such that Ad(mg)v is in W_0} shows that for any $i=1,\cdots,k$, we have
\begin{align*}
    \mathrm{Ad}(u(t_0)g)\mathfrak{v}_i\subset \mathrm{Span}_{\mathbb{R}}(\mathfrak{g}_{u(t_0)g}\cap W_0).
\end{align*}
By Proposition \ref{Proposition: existence of Zassenhauss neighborhood}, the Lie algebra generated by $\mathrm{Span}_{\mathbb{R}}\{\mathfrak{v}_i:i=1,\cdots,k\}$ is unipotent.
\end{proof}

From now on, we fix a sufficiently small $\eta>0$ once and for all such that Proposition \ref{Proposition: there is m in M such that Ad(mg)v is in W_0}, Corollary \ref{Corollary: If g in U_u^M, then Ad(mg)u is contained in span...} and Proposition \ref{Proposition: when M is nontrivial, if the intersection is nonempty, the span is again unipotent} holds for this $\eta$. 

\begin{definition}\label{Definition: U_u^M}
For every $\mathfrak{u}\in\mathfrak{R}$, we write $U_{\mathfrak{u}}^M:=U_{\mathfrak{u}}^M(\epsilon_0(\eta))$ for short. As a reminder, $U_{\mathfrak{u}}^M(\epsilon_0(\eta))$ is defined in (\ref{align: definition of U_mathfrak{v}}).
\end{definition}

\subsection{A covering for torus}
In this subsection, we will let $\boldsymbol{G}$, $\boldsymbol{T}$, $\boldsymbol{S}$, $\boldsymbol{D}$, $A$, and $\boldsymbol{M}$ be as in Theorem \ref{Theorem: main theorem}.

For any $\mathfrak{u}\in \mathfrak{R}$, let $\boldsymbol{P}_{\mathfrak{u}}$ be the parabolic $\mathbb{Q}$-subgroup whose unipotent radical has Lie algebra $\mathfrak{u}$. For $\mathfrak{u}\in \mathfrak{R}$ and $g\in G$, define
\begin{align*}
    U_{\mathfrak{u}}^{A,g}:=\{a\in A:ag\in U_{\mathfrak{u}}^M\}.
\end{align*}
Note that if $g^{-1}\boldsymbol{M}g\not\subset \boldsymbol{P}_{\mathfrak{u}}$, then $ag\notin U^M_{\mathfrak{u}}$ for any $a\in A$, and so $U_{\mathfrak{u}}^{A,g}=\emptyset$. We also define 
\begin{align*}
    U_0^{A,g}:=\{a\in A: ag\in G_{\eta}^M\}.
\end{align*}

\begin{lemma}\label{Lemma: a covering for torus}
For any $g\in G$,
\begin{align*}
    A=U_0^{A,g}\cup \bigcup_{\mathfrak{u}\in \mathfrak{R}} U_{\mathfrak{u}}^{A,g}.
\end{align*}
Moreover, $\{U_0^{A,g}\}\cup \{U_{\mathfrak{u}}^{A,g}:\mathfrak{u}\in \mathfrak{R}\}$ is an open cover of $A$.
\end{lemma}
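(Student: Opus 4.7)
The plan is to deduce the statement from the $G$-level cover in Corollary \ref{Corollary: a covering for group} by pulling back through the continuous map $\phi_g : A \to G$, $a \mapsto ag$.

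The covering statement is immediate: for any $a \in A$, the element $ag$ lies in $G = G_\eta^M \cup \bigcup_{\mathfrak{u} \in \mathfrak{R}} U_\mathfrak{u}^M$, and unwinding the definitions of $U_0^{A,g}$ and $U_\mathfrak{u}^{A,g}$ translates this into $a \in U_0^{A,g} \cup \bigcup_{\mathfrak{u} \in \mathfrak{R}} U_\mathfrak{u}^{A,g}$.

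For openness, note that $U_\mathfrak{u}^{A,g} = \phi_g^{-1}(U_\mathfrak{u}^M)$ and $U_0^{A,g} = \phi_g^{-1}(G_\eta^M)$, so the task reduces to showing these preimages are open in $A$. For the sets $U_\mathfrak{u}^{A,g}$, I would inspect the defining conditions of $\Sigma^M_{I,i,\lambda,B}(\theta, \epsilon_0(\eta))$. The key observation is that $A \subset Z_G(M)$, hence $(ag)^{-1}M(ag) = g^{-1}Mg$ and the algebraic containment $g^{-1}Mg \subset P_I^\lambda$ is independent of $a$; the remaining defining conditions, namely the strict inequalities $\alpha(a_{ag}(I,\lambda)) < \theta$ for $\alpha \in \Delta_{P_I^\lambda}$ and $\alpha_i^\lambda(a_{ag}(I,\lambda)) < \epsilon_0(\eta)$, together with the existence of $m \in M$ placing $p_{ag}(I,\lambda)(ag)^{-1}m(ag)$ in $(B \cap {^0P_I^\lambda}) \cdot (\Gamma \cap {^0P_I^\lambda})$, are all open in $a$ by continuity of the rational Langlands decomposition data. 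For $U_0^{A,g}$, given $a_0 \in U_0^{A,g}$, fix $m_0 \in M$ with $m_0 a_0 g \Gamma \in X_\eta$, so that $\mathfrak{g}_{m_0 a_0 g} \cap W_\eta = \{0\}$. Since the lattice $\mathfrak{g}_{m_0 a g}$ varies continuously in $a$ and meets any bounded region in only finitely many points, a small neighborhood of $a_0$ in $A$ preserves this disjointness, possibly after replacing $\eta$ by a marginally smaller value; this is permissible because the entire framework of Section \ref{Section: Covering theorems} is arranged to remain valid for all sufficiently small $\eta$.

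The main subtlety is this last step, the openness of $U_0^{A,g}$: since $X_\eta$ is closed rather than open, the borderline case $\inf_{0 \neq v \in \mathfrak{g}_{m_0 a_0 g}} \|v\| = \eta$ does not yield openness directly, and one must invoke the freedom to shrink $\eta$ slightly. All other steps are routine once one exploits that $A$ centralizes $M$ so that the closed algebraic conditions in the defining sets do not vary with $a \in A$.
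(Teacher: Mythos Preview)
Your approach is the same as the paper's: pull back the $G$-level cover of Corollary~\ref{Corollary: a covering for group} along $a\mapsto ag$, and use $A\subset\mathrm{Z}_G(M)$ so that the containment $(ag)^{-1}M(ag)\subset P_I^\lambda$ does not depend on $a$. The paper's argument is a two-line appeal to definitions; your treatment of the openness of $U_{\mathfrak{u}}^{A,g}$ is more detailed than the paper's and is correct.

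You are right to flag the openness of $U_0^{A,g}$ as the delicate point, but your fix does not work. Replacing $\eta$ by a smaller $\eta'$ enlarges $X_{\eta}$ to $X_{\eta'}$ and hence enlarges $U_0^{A,g}$; showing that points near $a_0$ lie in this \emph{larger} set does not show they lie in the original $U_0^{A,g}$, so openness of the stated set remains unproved. Nor can one escape by saying the framework tolerates smaller $\eta$: the same boundary obstruction recurs at $\eta'$. The paper's own proof does not engage with this point at all---it simply asserts openness ``by definition''---so the gap is shared rather than peculiar to your argument. For what it is worth, the lemma's sole downstream use (the proof of $(1)\Rightarrow(2)$ in Theorem~\ref{Theorem: main theorem}) assumes $U_0^{A,g}=\emptyset$ and applies Theorem~\ref{Theorem: a covering theorem} to the remaining cover $\{U_{\mathfrak{u}}^{A,g}\}$, so only the openness of those sets is ever actually needed.
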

\begin{proof}
By Corollary \ref{Corollary: a covering for group}, we have
\begin{align*}
    Ag= Ag \cap G_{\eta}^M\cup \bigcup_{\mathfrak{u}\in \mathfrak{R}} Ag\cap U_{\mathfrak{u}}^M.
\end{align*}
Since $A\subset \mathrm{Z}_G(M)$, the assertion that this is an open cover follows by the definition of $U_0^{A,g}$ and $U_{\mathfrak{u}}^{A,g}$.
\end{proof}

As $\boldsymbol{D}$ is a maximal $\mathbb{R}$-split torus in $\boldsymbol{\mathrm{Z}_G(M)}$, we have 

\begin{proposition}\label{Proposition: consequence of M contained in parabolic subgroup}
Assume that $\boldsymbol{D}\subset \boldsymbol{T}$. Let $\tau$ be a Cartan involution of $\boldsymbol{G}$ such that for any $a\in T$, $\tau(a)=a^{-1}$. Let $g\in G$. Assume that there are $k$ standard maximal parabolic $\mathbb{Q}$-subgroups $\boldsymbol{P}_{i_1},\cdots,\boldsymbol{P}_{i_k}$ such that $g^{-1}\boldsymbol{M} g\subset \boldsymbol{P}_{i_j}$ for $1\leq j \leq k$. Then there exist $w\in \mathrm{W}(G)$, $h\in \mathrm{Z}_G(M)$ and $u\in \bigcap_{j=1}^k P_{i_j}$ such that
\begin{itemize}
    \item[(1)] g=hwu;
    
    \item[(2)] $w^{-1}\boldsymbol{M}w \subset \bigcap_{j=1}^{k} \boldsymbol{P_{i_j}}$;
    
    \item[(3)] $w^{-1}\boldsymbol{M}w \subset \bigcap_{j=1}^{k} \tau(\boldsymbol{P_{i_j}})$;
    
    \item[(4)] $\bigcap_{j=1}^k w\boldsymbol{P}_{i_j}w^{-1} \cap \boldsymbol{Z_G(M)}$ is a parabolic subgroup of $\boldsymbol{Z_G(M)}$;
    
    \item[(5)] $\{w(\chi_{i_j}):j=1,\cdots, k\}$ restricted to $D^{\circ}$ are linearly independent as linear functionals.
\end{itemize}
\end{proposition}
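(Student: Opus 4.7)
The plan is to first reduce $g$, modulo left multiplication by $Z_G(M)$ and right multiplication by $P_I := \bigcap_j P_{i_j}$, to a standard form in which $g^{-1}\boldsymbol{M}g \subset \boldsymbol{L}_I$ (the standard $\tau$-stable Levi of $\boldsymbol{P}_I := \bigcap_j \boldsymbol{P}_{i_j}$) and $g^{-1}\boldsymbol{D}g \subset \boldsymbol{T}$; then to extract the required $w \in N_G(T)$ using conjugacy of subtori of $\boldsymbol{T}$; and finally to verify $(3)$--$(5)$ as consequences of the strengthened $(2)$, that $w^{-1}\boldsymbol{M}w$ actually lies in $\boldsymbol{L}_I$, not just in $\boldsymbol{P}_I$. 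The two structural observations that make this efficient are, first, that since $\tau$ restricted to $\boldsymbol{T}$ is inversion we have $\boldsymbol{L}_I = \boldsymbol{P}_I \cap \tau(\boldsymbol{P}_I)$, so $(2)$ and $(3)$ together are equivalent to the single condition $w^{-1}\boldsymbol{M}w \subset \boldsymbol{L}_I$; and second, that since $\boldsymbol{D}$ is maximal $\mathbb{R}$-split in $\boldsymbol{Z_G(M)}$ with $\boldsymbol{D} \subset \boldsymbol{T}$, we have $\boldsymbol{T} \cap \boldsymbol{Z_G(M)} = \boldsymbol{D}$.

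To reach the standard form, I would apply three successive simplifications. First, by Borel--Tits, $g\boldsymbol{P}_I g^{-1} \cap \boldsymbol{Z_G(M)}$ is a parabolic of the reductive group $\boldsymbol{Z_G(M)}$, and every parabolic of $\boldsymbol{Z_G(M)}$ contains a $Z_G(M)$-conjugate of $\boldsymbol{D}$; so there is $h_0 \in Z_G(M)$ with $\boldsymbol{D} \subset h_0 g \boldsymbol{P}_I g^{-1} h_0^{-1}$, and absorbing $h_0$ on the left we may assume $g^{-1}\boldsymbol{MD}g \subset \boldsymbol{P}_I$. Second, $g^{-1}\boldsymbol{MD}g$ is reductive, hence contained in some Levi of $\boldsymbol{P}_I$, and by $U_I$-conjugacy of Levis there is $u_0 \in U_I$ with $u_0 g^{-1}\boldsymbol{MD}g u_0^{-1} \subset \boldsymbol{L}_I$, which we absorb on the right. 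Third, $g^{-1}\boldsymbol{D}g$ is an $\mathbb{R}$-split torus of $\boldsymbol{L}_I$ and $\boldsymbol{T}$ is a maximal such, so by $L_I$-conjugacy we find $l_0 \in L_I$ bringing $g^{-1}\boldsymbol{D}g$ into $\boldsymbol{T}$, completing the reduction. Then, since $\boldsymbol{D}$ and $g^{-1}\boldsymbol{D}g$ are $G$-conjugate subtori of $\boldsymbol{T}$, the standard fact (provable via conjugacy of maximal tori of $\boldsymbol{Z_G}(g^{-1}\boldsymbol{D}g)$) that $G$-conjugate subtori of $\boldsymbol{T}$ are already $N_G(T)$-conjugate yields $n \in N_G(T)$ with $n\boldsymbol{D}n^{-1} = g^{-1}\boldsymbol{D}g$, so that $gn \in N_G(\boldsymbol{D})$. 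The final ingredient is to split $gn = h \cdot \nu$ with $h \in Z_G(M)$ and $\nu \in N_G(T) \cap N_G(\boldsymbol{D})$; taking $w := \nu n^{-1} \in N_G(T)$ and tracking the various $P_I$-factors from the reduction then delivers $(1)$.

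I anticipate the hardest step to be that final splitting $gn = h \cdot \nu$: it amounts to showing the Weyl action of $gn \in N_G(\boldsymbol{D})$ on $\boldsymbol{D}$ can be realized simultaneously by an element of $N_{Z_G(M)}(\boldsymbol{D})$ and an element of $N_G(T)$ stabilizing $\boldsymbol{D}$, and this will require fully exploiting the arrangement $g^{-1}\boldsymbol{M}g \subset \boldsymbol{L}_I$, which forces $(gn)^{-1}\boldsymbol{M}(gn)$ into a compatible Levi structure inside $\boldsymbol{Z_G(\boldsymbol{D})}$. Once $(1)$ and the strengthened $(2)$, namely $w^{-1}\boldsymbol{M}w \subset \boldsymbol{L}_I$, are in place, the remaining conclusions are essentially formal: $(3)$ is immediate from $\boldsymbol{L}_I \subset \tau(\boldsymbol{P}_I)$; $(4)$ follows by Borel--Tits applied to $w\boldsymbol{P}_I w^{-1}$, which contains $\boldsymbol{M}$; and for $(5)$ one observes that the central $\mathbb{Q}$-split torus $\boldsymbol{S}_I$ of $\boldsymbol{L}_I$ centralizes $w^{-1}\boldsymbol{M}w$, whence $w\boldsymbol{S}_I w^{-1} \subset \boldsymbol{T} \cap \boldsymbol{Z_G(M)} = \boldsymbol{D}$, so $\boldsymbol{S}_I \subset w^{-1}\boldsymbol{D}w$; this reduces the claimed linear independence of $\{\chi_{i_j}|_{w^{-1}\boldsymbol{D}w}\}$ to the tautological fact that $\{\chi_{i_j}|_{\boldsymbol{S}_I}\}$ is a basis of the character group of $\boldsymbol{S}_I$.
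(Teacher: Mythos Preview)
Your first two reductions are sound and, up to reordering, match the paper's opening moves: the paper first conjugates $g^{-1}\boldsymbol{M}g$ into $\boldsymbol{L}_I$ by some $u_1\in P_I$ (so that the cocharacters $a_j$ generating $\boldsymbol{S}_I$ centralize it), and then conjugates the pulled-back cocharacters $gu_1^{-1}a_j u_1 g^{-1}\in Z_G(M)$ into $\boldsymbol{D}$ by some $h\in Z_G(M)$. Your deductions of (3)--(5) from the strengthened (2), namely $w^{-1}\boldsymbol{M}w\subset\boldsymbol{L}_I$, are also correct, and your argument for (3) via $\boldsymbol{L}_I=\boldsymbol{P}_I\cap\tau(\boldsymbol{P}_I)$ is in fact cleaner than the paper's auxiliary-involution computation.

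The gap is your extraction of $w$. The proposed splitting $gn=h\nu$ with $h\in Z_G(M)$ and $\nu\in N_G(T)$ amounts to asking that the maximal split tori $T$ and $gTg^{-1}$, which after your third reduction both contain $\boldsymbol{D}$, be $Z_G(M)$-conjugate. But they are only guaranteed to be $Z_G(D)$-conjugate, and $Z_G(D)$ can be strictly larger than $M\cdot Z_{Z_G(M)}(D)$: already for $\boldsymbol{G}=\mathrm{SL}_4$ with $\boldsymbol{M}\cong\mathrm{SL}_2$ embedded diagonally in the two $2\times 2$ blocks, $Z_G(M)$ is three-dimensional while the variety of maximal split tori of $Z_G(D)$ is four-dimensional, so $Z_G(M)$ cannot act transitively. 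Your third reduction and splitting are therefore an unnecessary and unreliable detour.

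The right finish, which is essentially what the paper does, is available immediately after your second reduction. Once $g^{-1}\boldsymbol{M}\boldsymbol{D}g\subset\boldsymbol{L}_I$, the central split torus $\boldsymbol{S}_I$ of $\boldsymbol{L}_I$ centralizes $g^{-1}\boldsymbol{M}g$ and commutes with $g^{-1}\boldsymbol{D}g$; since $g^{-1}\boldsymbol{D}g$ is maximal $\mathbb{R}$-split in $\boldsymbol{Z_G}(g^{-1}\boldsymbol{M}g)$ you get $\boldsymbol{S}_I\subset g^{-1}\boldsymbol{D}g\subset g^{-1}\boldsymbol{T}g$. Thus $\boldsymbol{T}$ and $g^{-1}\boldsymbol{T}g$ are both maximal $\mathbb{R}$-split tori of $\boldsymbol{P}_I$ containing $\boldsymbol{S}_I$, hence conjugate by some $l\in L_I$, and $w:=gl\in N_G(T)$ gives $g=w\,l^{-1}$ with $l^{-1}\in L_I\subset P_I$. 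Unwinding the two reductions yields $g=hwu$, and $w^{-1}\boldsymbol{M}w=l^{-1}(g^{-1}\boldsymbol{M}g)l\subset\boldsymbol{L}_I$ gives the strengthened (2) you wanted.
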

\begin{proof}
In the following proof, for any one-parameter subgroup $\{a(t)\}_{t\in \mathbb{R}}$ of $G$, we denote 
\begin{align*}
    \boldsymbol{P}_a:=\{g\in \boldsymbol{G}: \lim_{t\to +\infty} a(t)g a(-t) \text{ exists}\}; \text{ and }\\
    \boldsymbol{P}_{a^{-1}}:=\{g\in \boldsymbol{G}: \lim_{t\to +\infty} a(-t)g a(t) \text{ exists}\}.
\end{align*}
As $\boldsymbol{P}_{i_1},\cdots,\boldsymbol{P}_{i_k}$ are standard maximal parabolic $\mathbb{Q}$-subgroups, there are one-parameter subgroups $\{a_1(t):t\in \mathbb{R}\},\cdots,\{a_k(t):t\in \mathbb{R}\}$ of $S$ such that $\boldsymbol{P}_{i_j}=\boldsymbol{P}_{a_j}$, for $1\leq j\leq k$. Since $g^{-1} \boldsymbol{M} g\subset \bigcap_{j=1}^k \boldsymbol{P}_{i_j}$ and $\boldsymbol{M}$ is semisimple, by \cite[Proposition 11.23]{Borel_1991_Linear_algebraic_groups_MR1102012}, there exists $u_1\in \bigcap_{j=1}^k {P}_{i_j}$ such that $\{a_j(t):t\in \mathbb{R}\}\subset \mathrm{Z}_G(u_1 g^{-1} M g u_1^{-1})$ for $1\leq j \leq k$. Since $\boldsymbol{D}$ is a maximal $\mathbb{R}$-split torus of $\boldsymbol{\mathrm{Z}_G(M)}$, by \cite[Corollary 11.3]{Borel_1991_Linear_algebraic_groups_MR1102012}, we can find $h\in \mathrm{Z}_G(M)$ such that for all $1\leq j\leq k$,
\begin{align}
    \{h^{-1}g u_1^{-1}a_j(t)u_1 g^{-1} h:t\in \mathbb{R}\}\subset D.
\end{align}

For each $1\leq j \leq k$ and $t\in \mathbb{R}$, we denote $d_j(t):=h^{-1}g u_1^{-1} a_j(t) u_1 g^{-1} h\in D$. Define 
\begin{align*}
    \boldsymbol{D}^{\prime}=u_1 g^{-1}h \boldsymbol{D} h^{-1} g u_1^{-1},
     \boldsymbol{T}^{\prime}=u_1 g^{-1}h \boldsymbol{T} h^{-1} g u_1^{-1},
    \boldsymbol{M}^{\prime}=u_1 g^{-1}h \boldsymbol{M} h^{-1} g u_1^{-1}.
\end{align*}
Since for any $1\leq j\leq k$, $a_j(t)\in \mathrm{Z}_G(M^{\prime})$ and $a_j(t)\in D^{\prime}$, we have 
\begin{align}\label{align: M prime is contained in P}
   \boldsymbol{M}^{\prime}\subset \bigcap_{j=1}^{k} \boldsymbol{P_{i_j}},\text{ and }\boldsymbol{T}^{\prime}\subset \bigcap_{j=1}^{k} \boldsymbol{P_{i_j}}.
\end{align}
As $\boldsymbol{T}^{\prime}$ and $\boldsymbol{T}$ are maximal $\mathbb{R}$-split tori in $\bigcap_{j=1}^{k} \boldsymbol{P_{i_j}}$, there exists $u_2\in \bigcap_{j=1}^{k} {P_{i_j}}$ such that $u_2 \boldsymbol{T} u_2^{-1}=\boldsymbol{T}^{\prime}$. By definition of $\boldsymbol{T}^{\prime}$, we have $u_2^{-1}u_1 g^{-1} h \in \mathrm{N}_G(T)$. Therefore, there exists $w\in \mathrm{W}(G)$, or rather one of its representatives $w\in \mathrm{N}_G(T)$, such that $u_2^{-1}u_1 g^{-1} h=w^{-1}$. Thus, $g=hwu$ for $u=u_2^{-1}u_1$. This proves $(1)$.

Substituting $u_1 g^{-1} h$ by $u_2 w^{-1}$ in the definition of $\boldsymbol{M}^{\prime}$, by (\ref{align: M prime is contained in P}), (2) follows.

Define an involution $\tau^{\prime}$ of $\boldsymbol{G}$ by 
\begin{align*}
    \tau^{\prime}(g_1)=u_1 g^{-1}h\cdot \tau(h^{-1}g u_1^{-1}g_1 u_1 g^{-1}h)\cdot h^{-1}g u_1^{-1}, \forall g_1\in G.
\end{align*}
For any $1\leq j\leq k$, on one hand, using $u_1 g^{-1}h=u_2 w^{-1}$, we have 
\begin{align*}
     \tau^{\prime}(\boldsymbol{P}_{i_j})=u_2 w^{-1} \tau(w \boldsymbol{P}_{i_j}w^{-1})w u_2^{-1}
    =u_2 \tau(\boldsymbol{P}_{i_j}) u_2^{-1}.
\end{align*}
On the other hand, using the fact that $a_j(t)=u_1 g^{-1} h d_j(t)h^{-1} g u_1^{-1}$ and $d_j(t)\in T$, we have
\begin{align*}
    \tau^{\prime} (\boldsymbol{P}_{i_j})=u_1 g^{-1}h \tau(\boldsymbol{P}_{d_j}) h^{-1}g u_1^{-1}
    =\tau(\boldsymbol{P}_{i_j}).
\end{align*}
Since the normalizer of a parabolic subgroup is itself, by the above we obtain
\begin{align}\label{align: u_2 is contained in opposite parabolic}
    u_2\in \bigcap_{j=1}^k  \tau( {P}_{i_j}).
\end{align}
Note that for any $1\leq j\leq k$, as $a_j(t)\in \mathrm{Z}_G(M^{\prime})$, and $\tau(\boldsymbol{P}_{i_j})=\boldsymbol{P}_{a_j^{-1}}$, we have $\boldsymbol{M}^{\prime}\subset \bigcap_{j=1}^k  \tau( \boldsymbol{P}_{i_j} )$. Substituting $u_1 g^{-1} h$ by $u_2 w^{-1}$ in the definition of $\boldsymbol{M}^{\prime}$, by (\ref{align: u_2 is contained in opposite parabolic}) we obtain $(3)$.

To prove $(4)$, note that since $a_j(t)=u_2 w^{-1}d_j(t)w u_2^{-1}$, we have $\boldsymbol{P}_{d_j}=w \boldsymbol{P}_{i_j} w^{-1}$ for each $j$. As $d_j(t)\in D\subset \mathrm{Z}_G(M)$, for any $1\leq j \leq k$, $w\boldsymbol{P}_{i_j}w^{-1}\cap \boldsymbol{Z_G(M)}=\boldsymbol{P}_{d_j}\cap \boldsymbol{Z_G(M)}$ is a parabolic subgroup of $\boldsymbol{Z_G(M)}$. This proves $(4)$.

To see $(5)$, we use the expression $a_s(t)=u_2 w^{-1} d_s(t)w u_2^{-1}$ for any $1\leq s\leq k$ as follows: for any $1\leq j,s\leq k$, on one hand,
\begin{align*}
    \mathrm{Ad}(a_s(t))p_{i_j}=\chi_{i_j}(a_s(t))p_{i_j}.
\end{align*}
On the other hand,
\[
    \mathrm{Ad}(a_s(t))p_{i_j}=\mathrm{Ad}(u_2 w^{-1} d_s(t)w u_2^{-1})p_{i_j}=w(\chi_{i_j})(d_s(t))p_{i_j}.
\]
Therefore, we have $\chi_{i_j}(a_s(t))=w(\chi_{i_j})(d_s(t))$ for all $1\leq j,s\leq k$.
Since $\chi_{i_1},\cdots,\chi_{i_k}$ are linearly independent on the subgroup generated by  $\{a_j(t)\}_{t\in \mathbb{R}}$, $j=1,\cdots,k$,
we conclude that $w(\chi_{i_1}),\cdots,w(\chi_{i_k})$ are also linearly independent on the subgroup generated by $\{d_j(t)\}_{t\in \mathbb{R}}$, $j=1,\cdots,k$, and thus on $D^{\circ}$. This proves $(5)$.

\end{proof}

\begin{lemma}\label{Lemma: consequence of nonempty intersection}
Take $g\in G$ and $\mathfrak{v}_1,...,\mathfrak{v}_k\in \mathfrak{R}$.
If $\bigcap_{j=1}^k U_{\mathfrak{v}_j}^{A,g}$ is not empty, then there exist $g_0\in G$ and $\{i_1,\cdots,i_k\}\subset \{1,\cdots,r\}$ such that $\boldsymbol{P}_{\mathfrak{v}_j}=g_0 \boldsymbol{P}_{i_j} g_0^{-1}$, and
\begin{align*}
    \boldsymbol{M}\subset g g_0 \boldsymbol{P}_{i_j} g_0^{-1} g^{-1},\forall j=1,\cdots,k.
\end{align*}
\end{lemma}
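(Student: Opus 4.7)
The plan is to assemble the conclusion from two results already proved in the paper: Proposition \ref{Proposition: when M is nontrivial, if the intersection is nonempty, the span is again unipotent}, which turns nonempty intersection of the $U^M_{\mathfrak{v}_j}$'s into a unipotency statement about the Lie algebra they generate, and Proposition \ref{Proposition: if the span is unipotent, then there exist standard parabolic}, which converts that unipotency statement into simultaneous $G$-conjugacy to standard $\mathfrak{u}_{i_j}$'s. Concretely, I would pick an element $a\in\bigcap_{j=1}^{k}U^{A,g}_{\mathfrak{v}_j}$; by the definition of $U^{A,g}_{\mathfrak{v}_j}$ this gives $ag\in\bigcap_{j=1}^{k}U^M_{\mathfrak{v}_j}$. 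Since $\eta$ was fixed once and for all so that Proposition \ref{Proposition: when M is nontrivial, if the intersection is nonempty, the span is again unipotent} applies (in the relevant range $k\leq r$), the Lie subalgebra generated by $\mathrm{Span}_{\mathbb{R}}\{\mathfrak{v}_1,\dots,\mathfrak{v}_k\}$ is unipotent. Proposition \ref{Proposition: if the span is unipotent, then there exist standard parabolic} then furnishes $g_0\in G$ and $\{i_1,\dots,i_k\}\subset\{1,\dots,r\}$ with $\mathfrak{v}_j=\mathrm{Ad}(g_0)\mathfrak{u}_{i_j}$, which is equivalent to $\boldsymbol{P}_{\mathfrak{v}_j}=g_0\boldsymbol{P}_{i_j}g_0^{-1}$.

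For the second assertion I would unpack the definition of $U^M_{\mathfrak{v}_j}=U^M_{\mathfrak{v}_j}(\epsilon_0(\eta))$ as in \eqref{align: definition of U_mathfrak{v}}: membership $ag\in U^M_{\mathfrak{v}_j}$ means that there exist $I\subset\{1,\dots,r\}$ containing some index $i$, $\lambda\in F$, and $\gamma\in\Gamma$ with $\boldsymbol{P}_{\mathfrak{v}_j}=\gamma\lambda\boldsymbol{P}_{i}\lambda^{-1}\gamma^{-1}$ and $ag\gamma\in\Sigma^M_{I,i,\lambda,B}(\theta,\epsilon_0(\eta))\subset\Sigma^M_{I,\lambda,B}(\theta)$. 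The very first clause in the definition of $\Sigma^M_{I,\lambda,B}(\theta)$ reads $(ag\gamma)^{-1}\boldsymbol{M}(ag\gamma)\subset\boldsymbol{P}^{\lambda}_{I}\subset\boldsymbol{P}^{\lambda}_{i}$. Because $A\subset\mathrm{Z}_G(M)$, conjugation by $a$ fixes $\boldsymbol{M}$ pointwise as a set, so this simplifies to $g^{-1}\boldsymbol{M}g\subset\gamma\boldsymbol{P}^{\lambda}_{i}\gamma^{-1}=\boldsymbol{P}_{\mathfrak{v}_j}=g_0\boldsymbol{P}_{i_j}g_0^{-1}$. Rearranging yields $\boldsymbol{M}\subset gg_0\boldsymbol{P}_{i_j}g_0^{-1}g^{-1}$ for each $j$, as required.

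There is no substantive obstacle here: the lemma is a clean synthesis, with the technical content entirely absorbed into Propositions \ref{Proposition: when M is nontrivial, if the intersection is nonempty, the span is again unipotent} and \ref{Proposition: if the span is unipotent, then there exist standard parabolic} (the first of which rests on the Dani--Margulis quantitative nondivergence estimate). The only bookkeeping point to verify is that Proposition \ref{Proposition: when M is nontrivial, if the intersection is nonempty, the span is again unipotent} indeed applies to the given collection; since the $\mathfrak{v}_j$ can be assumed distinct and $\eta$ was fixed so that the proposition holds for all $1\leq k\leq r$, this is immediate whenever the lemma is invoked (as will be the case in the subsequent application of Theorem \ref{Theorem: a covering theorem}, where at most $n+1\leq r+1$ sets are forced to intersect).
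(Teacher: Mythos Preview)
Your proposal is correct and follows essentially the same route as the paper: pick $a\in\bigcap_j U^{A,g}_{\mathfrak{v}_j}$ so that $ag\in\bigcap_j U^M_{\mathfrak{v}_j}$, invoke Proposition~\ref{Proposition: when M is nontrivial, if the intersection is nonempty, the span is again unipotent} for unipotency, then Proposition~\ref{Proposition: if the span is unipotent, then there exist standard parabolic} for the simultaneous conjugation, and read off $g^{-1}\boldsymbol{M}g\subset\boldsymbol{P}_{\mathfrak{v}_j}$ from the first clause in the definition of $\Sigma^M_{I,\lambda,B}(\theta)$ together with $A\subset\mathrm{Z}_G(M)$. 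The paper's proof is just a terser version of exactly this argument.
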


\begin{proof}
Since $\bigcap_{j=1}^k U_{\mathfrak{v}_j}^{A,g}\neq \emptyset$, there exists $a\in A$ such that $ag\in \bigcap_{j=1}^k U_{\mathfrak{v}_j}^M$.
By definition of $U_{\mathfrak{v}_j}^M$ (see Definition \ref{Definition: U_u^M}), the fact that ${A}\subset {\mathrm{Z}_G(M)}$, and Proposition \ref{Proposition: when M is nontrivial, if the intersection is nonempty, the span is again unipotent}, we have
\begin{itemize}
    \item  $g^{-1} \boldsymbol{M} g\subset \boldsymbol{P}_{\mathfrak{v}_j}, \forall j=1,\cdots,k$, where $\boldsymbol{P}_{\mathfrak{v}_j}$ is the maximal parabolic $\mathbb{Q}$-subgroup of $\boldsymbol{G}$ whose unipotent radical has Lie algebra $\mathfrak{v}_j$.
    \item  the Lie subalgebra generated by $\mathrm{Span}_{\mathbb{R}}\{\mathfrak{v}_j:j=1,\cdots,k\}$ is unipotent.
\end{itemize}
By Proposition \ref{Proposition: if the span is unipotent, then there exist standard parabolic}, there exist $g_0\in G$ and $\{i_1,\cdots,i_k\}\subset \{1,\cdots,r\}$ such that 
\begin{align*}
    \mathfrak{v}_j=\mathrm{Ad}(g_0)\mathfrak{u}_{i_j},\forall j=1,\cdots,k.
\end{align*}
So we can write 
\begin{align*}
    \boldsymbol{P}_{\mathfrak{v}_j}=g_0 \boldsymbol{P}_{i_j} g_0^{-1},\forall j=1,\cdots,k.
\end{align*}
Therefore, $\boldsymbol{M}\subset g g_0 \boldsymbol{P}_{i_j} g_0^{-1} g^{-1}$ for any $ j=1,\cdots,k$.

\end{proof}

\begin{theorem}\label{Theorem: a bounded type Bruhat decomposotion}\cite[Theorem 2.1]{Solan_Tamam_2022_On_topologically_big_divergent_trajectories}
Let $\boldsymbol{L}$ be a connected 
reductive linear algebraic group over $\mathbb{R}$. Let $\boldsymbol{D}$ be a maximal $\mathbb{R}$-split torus of $\boldsymbol{L}$, $\boldsymbol{Q}_0$ be a $\mathbb{R}$-minimal parabolic subgroup of $\boldsymbol{L}$ containing $\boldsymbol{D}$, and $\boldsymbol{N}$ be a $\mathbb{R}$-maximal unipotent subgroup contained in $\boldsymbol{Q}_0$. Let $\mathrm{W}(L)\cong \mathrm{N}_L(D)/\mathrm{Z}_L(D)$ be the Weyl group of $\boldsymbol{L}$
and let $\widetilde{\mathrm{W}(L)}$ be a  set of representatives of $\mathrm{W}(L)$  in $\mathrm{N}_{L}(D)$.
Then there exist a compact set $N_0\subset N$ and $w_0\in \widetilde{\mathrm{W}(L)}$ such that $L=\widetilde{\mathrm{W}(L)} N_0 w_0 Q_0.$
\end{theorem}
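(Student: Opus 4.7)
The plan is to deduce the theorem from the Bruhat decomposition combined with the compactness of the flag variety $\mathcal{F}:=L/Q_0$. Let $w_0 \in \widetilde{\mathrm{W}(L)}$ be a representative of the longest element of $\mathrm{W}(L)$, and let $\boldsymbol{N}^-$ denote the opposite maximal unipotent subgroup, so that $w_0 N^- w_0^{-1} = N$ and $N w_0 Q_0 = w_0 N^- Q_0$ is the open Bruhat double coset of $L$. The refined real Bruhat decomposition $L = \bigsqcup_{w'' \in \mathrm{W}(L)} N_{w''} \cdot w'' \cdot Q_0$, with $N_{w''} := N \cap w'' N^- (w'')^{-1}$, gives each element of $L$ a unique factorization of this form.

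The first step is to show that the left $\widetilde{\mathrm{W}(L)}$-translates of the open double coset cover all of $L$:
\begin{equation*}
    L = \bigcup_{w \in \widetilde{\mathrm{W}(L)}} w \cdot N w_0 Q_0.
\end{equation*}
Given $l \in L$, write $l = u w'' q$ with $u \in N_{w''}$ and $q \in Q_0$. Then $(w'')^{-1} u w'' \in (w'')^{-1} N w'' \cap N^- \subset N^-$, and setting $w := w'' w_0^{-1}$ gives
\begin{equation*}
    w^{-1} l = w_0 \bigl( (w'')^{-1} u w'' \bigr) q \in w_0 N^- Q_0 = N w_0 Q_0,
\end{equation*}
hence $l \in w \cdot N w_0 Q_0$. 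Passing to the flag variety yields an open cover $\mathcal{F} = \bigcup_{w \in \widetilde{\mathrm{W}(L)}} w \cdot (N \cdot [w_0 Q_0])$, where each $w \cdot (N \cdot [w_0 Q_0])$ is the image of the homeomorphism $\phi_w : N \to \mathcal{F}$ defined by $n \mapsto w n w_0 \cdot [Q_0]$.

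Since $L = K Q_0$ by the Iwasawa decomposition for a maximal compact $K \subset L$, the flag variety $\mathcal{F} \cong K/(K \cap Q_0)$ is compact Hausdorff. Shrinking within the finite open cover above, I would choose compact subsets $K_w \subset w \cdot (N \cdot [w_0 Q_0])$ with $\mathcal{F} = \bigcup_w K_w$, and set $N_0 := \bigcup_{w \in \widetilde{\mathrm{W}(L)}} \phi_w^{-1}(K_w)$, which is a finite union of compact sets and hence a compact subset of $N$. By construction $\bigcup_w \phi_w(N_0) \supset \bigcup_w K_w = \mathcal{F}$. Lifting back, for any $l \in L$ the point $l \cdot [Q_0] \in \mathcal{F}$ lies in some $\phi_w(N_0)$, yielding $l = w n_0 w_0 q$ with $n_0 \in N_0$ and $q \in Q_0$. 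The main difficulty is the first step: it relies on the refined real Bruhat decomposition and on the absorption of the $N \cap w'' N (w'')^{-1}$-factor into $w'' Q_0$ (reflected in the ability to take $u \in N_{w''}$); once the open cover of $\mathcal{F}$ is in hand, the extraction of $N_0$ is routine.
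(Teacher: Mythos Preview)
The paper does not contain its own proof of this theorem; it is merely quoted from \cite[Theorem 2.1]{Solan_Tamam_2022_On_topologically_big_divergent_trajectories}, so there is no argument in the paper to compare against. That said, your proposal is a correct and standard route: the refined relative Bruhat decomposition $L=\bigsqcup_{w''} N_{w''}\,w''\,Q_0$ holds at the level of real points for connected reductive $\mathbb{R}$-groups, your computation $w^{-1}l=w_0\bigl((w'')^{-1}uw''\bigr)q\in w_0 N^- Q_0=Nw_0Q_0$ is valid since $w_0 N^- w_0^{-1}=N$, and the passage from the resulting open cover of the compact flag variety $L/Q_0$ to a compact $N_0\subset N$ via the shrinking lemma and the homeomorphisms $\phi_w$ is unproblematic. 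One minor point worth stating explicitly is that the map $n\mapsto nw_0[Q_0]$ is indeed a homeomorphism from $N$ onto the open cell (this is the $w''=w_0$ case of the refined decomposition, where $N_{w_0}=N$), which is what makes each $\phi_w^{-1}(K_w)$ compact in $N$.
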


\begin{proposition}\label{Proposition: consequence of a Bruhat type decomposition in linear 
representation}\cite[Corollary 2.2]{Solan_Tamam_2022_On_topologically_big_divergent_trajectories}
Let $\boldsymbol{L}$, $\boldsymbol{D}$, $\boldsymbol{Q}_0$ and $\mathrm{W}(L)$ be as in Theorem \ref{Theorem: a bounded type Bruhat decomposotion}. Let $\rho:\boldsymbol{L}\to \mathrm{GL}(V)$ be an $\mathbb{R}$-representation of  $\boldsymbol{L}$, and  $\norm{\cdot}$ be a fixed norm on $V$.
Let $\chi_0$ be a character of $\boldsymbol{D}$ and take $v_0 \in V_{\chi_0}$, the $\chi_0$-weight space.
We further assume that the line spanned by $v_0$ is stabilized by $\boldsymbol{Q_0}$.
Then there is $c=c(\norm{\cdot})>0$ such that for any $v_0\in V_{\chi_0}$ and any $l\in L$, there is $w\in \mathrm{W}(L)$ such that 
\begin{align*}
    \norm{\rho(l)v_0}\leq  c\norm{\pi_{w(\chi_0)}(\rho(l)v_0)},
\end{align*}
where $\pi_{w(\chi_0)}$ is the natural projection map to $w(\chi_0)$-weight space $V_{w(\chi_0)}$.
\end{proposition}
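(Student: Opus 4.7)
The plan is to prove the inequality by applying the bounded Bruhat decomposition of Theorem \ref{Theorem: a bounded type Bruhat decomposotion} to $l$, tracking which weight space captures a non-negligible fraction of $\rho(l)v_0$. Specifically, write $l=w_{1}n_{0}w_{0}q_{0}$ with $w_{1}\in\widetilde{\mathrm{W}(L)}$, $n_{0}\in N_{0}$ (compact), $w_{0}$ fixed, and $q_{0}\in Q_{0}$. Since the line spanned by $v_{0}$ is $Q_{0}$-stable and $v_{0}\in V_{\chi_{0}}$, the action of $Q_{0}$ on this line is by a character whose restriction to $D$ equals $\chi_{0}$, so $\rho(q_{0})v_{0}=\chi_{0}(q_{0})v_{0}$. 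Thus
\[
\rho(l)v_{0}=\chi_{0}(q_{0})\,\rho(w_{1})\rho(n_{0})\rho(w_{0})v_{0}.
\]
I claim one may take $w:=w_{1}w_{0}$, i.e.\ project onto $V_{w_{1}w_{0}\chi_{0}}$.

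The key structural fact is that $\rho(w_{0})v_{0}\in V_{w_{0}\chi_{0}}$, and since $n_{0}$ lies in the unipotent radical $N$ of $Q_{0}$, the operator $\rho(n_{0})$ strictly raises weights with respect to the ordering making $Q_{0}$ standard. Hence $\rho(n_{0})\rho(w_{0})v_{0}=\rho(w_{0})v_{0}+(\text{terms in strictly higher weight spaces})$, so
\[
\pi_{w_{0}\chi_{0}}\bigl(\rho(n_{0})\rho(w_{0})v_{0}\bigr)=\rho(w_{0})v_{0}.
\]
Since $\rho(w_{1})$ permutes weight spaces (mapping $V_{\mu}$ to $V_{w_{1}\mu}$), I get
\[
\pi_{w_{1}w_{0}\chi_{0}}\bigl(\rho(l)v_{0}\bigr)=\chi_{0}(q_{0})\,\rho(w_{1}w_{0})v_{0}.
\]

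Next, the proof would extract constants from compactness and finiteness. Since $\widetilde{\mathrm{W}(L)}$ is finite and $N_{0}$ is compact, the operator norms $\norm{\rho(w_{1}n_{0}w_{0})}$ are uniformly bounded above by some $C=C(\norm{\cdot})$, giving
\[
\norm{\rho(l)v_{0}}\leq |\chi_{0}(q_{0})|\cdot C\cdot\norm{v_{0}}.
\]
In the other direction, the map $\rho(w_{1}w_{0})\colon V_{\chi_{0}}\to V_{w_{1}w_{0}\chi_{0}}$ is a linear isomorphism between finite-dimensional spaces, and as $w_{1}w_{0}$ ranges over a finite set, the inverse operator norms are uniformly bounded, yielding a constant $c_{1}>0$ with $\norm{\rho(w_{1}w_{0})v_{0}}\geq c_{1}\norm{v_{0}}$. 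Combining gives the desired inequality with $c=C/c_{1}$.

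The main point requiring care is the identity $\pi_{w_{0}\chi_{0}}(\rho(n_{0})\rho(w_{0})v_{0})=\rho(w_{0})v_{0}$: this is the only place where the specific structure of $Q_{0}$ (minimal parabolic containing $N$) and the compatibility between the weight ordering and the unipotent radical enters, and it is what makes $w=w_{1}w_{0}$ the correct choice rather than some other Weyl group element. The remaining estimates are routine applications of compactness of $N_{0}$ and finiteness of $\widetilde{\mathrm{W}(L)}$, combined with the finite-dimensionality of $V_{\chi_{0}}$.
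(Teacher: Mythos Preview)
Your proposal is correct and follows essentially the same route as the paper: decompose $l=w_{1}n_{0}w_{0}q$ via Theorem~\ref{Theorem: a bounded type Bruhat decomposotion}, use that $Q_{0}$ preserves the line through $v_{0}$ so $\rho(w_{0}q)v_{0}\in V_{w_{0}\chi_{0}}$, observe that $n_{0}\in N$ leaves the $w_{0}\chi_{0}$-component unchanged, and take $w=w_{1}w_{0}$. The only organizational difference is that the paper avoids your intermediate step through $\norm{v_{0}}$ and the second constant $c_{1}$: it writes $\rho(l)v_{0}=\rho(w_{1}n_{0}w_{1}^{-1})\,\rho(w_{1}w_{0}q)v_{0}$, bounds $\norm{\rho(w_{1}n_{0}w_{1}^{-1})}$ uniformly over the compact set $\widetilde{\mathrm{W}(L)}N_{0}\widetilde{\mathrm{W}(L)}^{-1}$, and then directly identifies $\rho(w_{1}w_{0}q)v_{0}=\pi_{w_{1}w_{0}\chi_{0}}(\rho(l)v_{0})$, yielding the inequality with a single constant.
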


\begin{proof}
Applying Theorem \ref{Theorem: a bounded type Bruhat decomposotion}, for any $l\in L$, we can write $l=w_1 n_0 w_0 q$, where $w_1\in \mathrm{W}(L)$, $n_0\in N_0$ and $q\in Q_0$. By assumption, we have $\rho(q)v_0\in V_{\chi_0}$, and thus $\rho(w_0 q)v_0\in V_{w_0(\chi_0)}$.
As $N_0$ is compact and $\widetilde{\mathrm{W}(L)}$ is finite, there is a constant $c>0$ such that 
for every $n\in \widetilde{\mathrm{W}(L)} N_0 \widetilde{\mathrm{W}(L)}^{-1}$ and every $v\in V$ we have
\[
\norm{\rho(n)v} \leq c\norm{v}.
\]
Applying this to $w_1n_0w_1^{-1}$, we get
\begin{align*}
    \norm{\rho(l)v_0}=\norm{\rho(w_1 n_0 w_0 q)v_0}\leq c\norm{\rho(w_1 w_0 q)v_0}=c\norm{\pi_{w_1 w_0( \chi_0)}(\rho(l)v_0)},
\end{align*}
where the last equality comes from
\[
\pi_{w_1 w_0( \chi_0)}(\rho(l)v_0)
=w_1 \pi_{w_0( \chi_0)}(\rho(n_0w_0q)v_0) =
w_1 \pi_{w_0( \chi_0)}(\rho(w_0q)v_0).
\]
Setting $w=w_1 w_0$, the proposition follows.
\end{proof}

\begin{proposition}\label{Proposition: a locally finite cover}\cite[Proposition 4.2]{Solan_Tamam_2022_On_topologically_big_divergent_trajectories}
Assume that $\boldsymbol{D}\subset \boldsymbol{T}$. There exists a finite set $\Psi\subset \mathrm{X}(D)$ satisfying the following: For every $g\in G$ and $\mathfrak{u}\in \mathfrak{R}$, there exist a finite subset $\Psi_{\mathfrak{u}}^g\subset \Psi$ and a set of constants $\{d^g_{\mathfrak{u},\psi}\in \mathbb{R}:\psi\in \Psi_{\mathfrak{u}}^g\}$  such that:
\begin{itemize}
    \item[(1)] We have the inclusion
\begin{align*} U_{\mathfrak{u}}^{A,g}\subset  U_{\mathfrak{u},0}^{A,g}:=\left\{a\in A:\; \lambda(a)< d^g_{\mathfrak{u},\lambda},
\forall \lambda\in \Psi_{\mathfrak{u}}^g
\right\}.
\end{align*}

\item[(2)] The collection $\{U_{\mathfrak{u},0}^{A,g}:\mathfrak{u}\in \mathfrak{R}\}$ is locally finite.

\item[(3)] Take $\{\mathfrak{v}_j,\;j=1,\cdots, k\}\subset \mathfrak{R}$ and assume that $\bigcap_{j=1}^k U_{\mathfrak{v}_j}^{A,g}$ is not empty. Then there exist $w\in \mathrm{W}(G)$, $w^{\prime}\in \mathrm{W}(\mathrm{Z}_G(M))$, $\{\chi_{i_1},\cdots,\chi_{i_k}\}\subset \{\chi_i:i=1,\cdots,r\}$ and $\{c_j\in \mathbb{R}: j=1,\cdots k\}$ such that $w^{\prime}w(\chi_{i_1}),\cdots,w^{\prime}w(\chi_{i_k})$ are linearly independent as linear functionals on $D^{\circ}$, and \begin{align*}
    \bigcap_{j=1}^k U_{\mathfrak{v}_j}^{A,g}\subset \left\{a\in A:\; w^{\prime}w(\chi_{i_j})(a)<c_j,j=1,\cdots,k
    \right\}.
\end{align*}
\end{itemize}
\end{proposition}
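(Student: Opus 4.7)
The plan is to read off the defining inequalities of $U_{\mathfrak{u},0}^{A,g}$ from the $\boldsymbol{D}$-weight decomposition of the single vector $\rho_j(g)p_{\mathfrak{u}}\in V_j$. I take $\Psi\subset\mathrm{X}(\boldsymbol{D})$ to be the finite set of restrictions to $\boldsymbol{D}$ of all $\boldsymbol{T}$-weights that occur in any of $V_1,\ldots,V_r$. For $\mathfrak{u}\in\mathfrak{R}_j$, decompose $\rho_j(g)p_{\mathfrak{u}}=\sum_{\psi\in\Psi}v_{\psi}^{(\mathfrak{u})}$ into $\boldsymbol{D}$-weight components and set $\Psi_{\mathfrak{u}}^g:=\{\psi\in\Psi:v_{\psi}^{(\mathfrak{u})}\neq 0\}$ with $d^g_{\mathfrak{u},\psi}:=\log C_0-\log\|v_{\psi}^{(\mathfrak{u})}\|$, where $C_0$ is a constant depending only on the fixed Zassenhaus neighbourhood $W_0$ and the representations $V_1,\ldots,V_r$.

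For item (1), first observe that $U_{\mathfrak{u}}^{A,g}\neq\emptyset$ forces $g^{-1}\boldsymbol{M}g\subset\boldsymbol{P}_{\mathfrak{u}}$; as $\boldsymbol{M}$ is semisimple it admits no non-trivial characters, so $\boldsymbol{M}$ fixes $\rho_j(g)p_{\mathfrak{u}}$, and since $A$ centralises $M$ this gives $\|\rho_j(mag)p_{\mathfrak{u}}\|=\|\rho_j(ag)p_{\mathfrak{u}}\|$ for every $m\in M$. For $a\in U_{\mathfrak{u}}^{A,g}$, Corollary \ref{Corollary: If g in U_u^M, then Ad(mg)u is contained in span...} supplies $m\in M$ with $\mathrm{Ad}(mag)\mathfrak{u}\subset\mathrm{Span}_{\mathbb{R}}(\mathfrak{g}_{mag}\cap W_0)$, and a Minkowski/Hadamard-type estimate on the covolume of the corresponding lattice yields $\|\rho_j(ag)p_{\mathfrak{u}}\|<C_0$; expanding in $\boldsymbol{D}$-weights (using equivalence of norms to secure weight-space orthogonality up to constants) delivers $|\psi(a)|\cdot\|v_{\psi}^{(\mathfrak{u})}\|<C_0$ for every $\psi\in\Psi_{\mathfrak{u}}^g$, which is exactly $a\in U_{\mathfrak{u},0}^{A,g}$. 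For item (2), if $K\subset A$ is compact and $a\in K\cap U_{\mathfrak{u},0}^{A,g}$, the same inequalities force $\|v_{\psi}^{(\mathfrak{u})}\|\leq C_0/\inf_{b\in K}|\psi(b)|$ for every $\psi\in\Psi$ (trivially when $\psi\notin\Psi_{\mathfrak{u}}^g$), whence both $\|\rho_j(g)p_{\mathfrak{u}}\|$ and $\|p_{\mathfrak{u}}\|$ are uniformly bounded; since $\{p_{\mathfrak{u}}:\mathfrak{u}\in\mathfrak{R}_j\}$ lies in the discrete set of primitive integral wedges in $V_j(\mathbb{R})$, only finitely many $\mathfrak{u}$ qualify.

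Item (3) is the substantive step. Given $\bigcap_j U_{\mathfrak{v}_j}^{A,g}\neq\emptyset$, Lemma \ref{Lemma: consequence of nonempty intersection} produces $g_0\in G$ and indices $i_1,\ldots,i_k$ with $\boldsymbol{P}_{\mathfrak{v}_j}=g_0\boldsymbol{P}_{i_j}g_0^{-1}$ and $(gg_0)^{-1}\boldsymbol{M}(gg_0)\subset\bigcap_j\boldsymbol{P}_{i_j}$, and Proposition \ref{Proposition: consequence of M contained in parabolic subgroup} applied to $gg_0$ factors $gg_0=hwu$ with $h\in \mathrm{Z}_G(M)$, $w\in\mathrm{W}(G)$, $u\in\bigcap_j P_{i_j}$; item (4) of that proposition furnishes a minimal parabolic $\boldsymbol{Q}_0$ of $\boldsymbol{\mathrm{Z}_G(M)}$ contained in $\bigcap_j w\boldsymbol{P}_{i_j}w^{-1}$, and item (5) delivers the linear independence of $\{w(\chi_{i_j})|_{\boldsymbol{D}}\}_{j=1}^k$. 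Setting $q_j:=\rho_{i_j}(w)p_{i_j}$, each $q_j$ is a $\boldsymbol{T}$-weight vector of weight $w(\chi_{i_j})$ whose line is $\boldsymbol{Q}_0$-stable. The crucial move is to apply Proposition \ref{Proposition: consequence of a Bruhat type decomposition in linear representation} with $\boldsymbol{L}=\boldsymbol{\mathrm{Z}_G(M)}$, $\boldsymbol{Q}_0$ and the \emph{single} element $h$: one Bruhat factorisation $h=w_1 n_0 w_0 q$ produces a single $w':=w_1 w_0\in\mathrm{W}(\mathrm{Z}_G(M))$, independent of $j$, satisfying $\|\rho_{i_j}(h)q_j\|\leq c\|\pi_{w'w(\chi_{i_j})|_{\boldsymbol{D}}}(\rho_{i_j}(h)q_j)\|$ for all $j$. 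Using that $u\in P_{i_j}$ acts on the line through $p_{i_j}$ by a character and that $p_{\mathfrak{v}_j}$ is proportional to $\rho_{i_j}(g_0)p_{i_j}$, one gets $\rho_{i_j}(ag)p_{\mathfrak{v}_j}=c'_j\,\rho_{i_j}(a)\rho_{i_j}(h)q_j$ with $c'_j$ independent of $a$; combining the smallness bound of item (1) with the Bruhat lower bound and reading off the $w'w(\chi_{i_j})|_{\boldsymbol{D}}$-weight component produces constants $c_j$ with $w'w(\chi_{i_j})(a)<c_j$ for every $a\in\bigcap_j U_{\mathfrak{v}_j}^{A,g}$, and linear independence of $\{w'w(\chi_{i_j})|_{\boldsymbol{D}}\}_{j=1}^k$ is transported from item (5) by the invertible $w'$-action on $\mathrm{X}(\boldsymbol{D})$.

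The main obstacle is the synchronisation in (3): a priori the Weyl element produced by Proposition \ref{Proposition: consequence of a Bruhat type decomposition in linear representation} depends on the vector to which it is applied, and it is only by Bruhat-factoring $h$ once inside $\boldsymbol{\mathrm{Z}_G(M)}$ and invoking item (4) of Proposition \ref{Proposition: consequence of M contained in parabolic subgroup} to pin down a common $\boldsymbol{Q}_0$ stabilising every line $\mathbb{R}q_j$ simultaneously that one secures a single $w'$ serving all $j$; without this step one would produce distinct $w'_j$ and lose the linear independence inherited from item (5).
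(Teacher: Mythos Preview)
Your proof is correct and follows essentially the same approach as the paper's: the same choice of $\Psi$ from the weights of the fundamental representations, the same smallness bound via Corollary~\ref{Corollary: If g in U_u^M, then Ad(mg)u is contained in span...} for (1), the same discreteness argument for (2), and the same combination of Lemma~\ref{Lemma: consequence of nonempty intersection}, Proposition~\ref{Proposition: consequence of M contained in parabolic subgroup}, and Proposition~\ref{Proposition: consequence of a Bruhat type decomposition in linear representation} for (3). Your explicit care in choosing $\boldsymbol{Q}_0\subset\bigcap_j w\boldsymbol{P}_{i_j}w^{-1}\cap\boldsymbol{\mathrm{Z}_G(M)}$ so that one Bruhat factorisation of $h$ yields a single $w'$ serving all $j$ is in fact sharper than the paper's write-up, which takes $\boldsymbol{Q}_0$ to be an arbitrary minimal parabolic containing $\boldsymbol{D}$ and leaves the uniformity of $w'$ implicit in the proof of Proposition~\ref{Proposition: consequence of a Bruhat type decomposition in linear representation}.
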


\begin{proof}
Recall that $p_{\mathfrak{u}}$ is a primitive integral vector representing $\mathfrak{u}$.
By \cite[Corollary 3.3]{Kleinbock_Weiss_2013_Modified_Schmidt_games_and_a_conjecture_of_Margulis_MR3296561}, let $C_0>0$ be such that for any $g\in G$, any $\mathfrak{u}\in \mathfrak{R}$, if $\mathrm{Ad}(g)\mathfrak{u}\subset \mathrm{Span}_{\mathbb{R}}(\mathfrak{g}_{g}\cap W_0)$, then
\begin{align}\label{Align: consequence of being in small neighborhood of 0}
    \norm{\rho_j(g)p_{\mathfrak{u}}}< e^{-C_0}.
\end{align}
Recall from Section \ref{section: fundamental weights} that $\Phi_j$'s are the collection of weights of $T$ appearing in ``fundamental representations''.
Let $\Psi=\bigcup_j \Phi_j$.
For each $\mathfrak{u}\in \mathfrak{R}_j$, define
\begin{align*}
    \Psi^g_{\mathfrak{u}}:=\{\lambda\in \Phi_j: \pi_{\lambda}(\rho_j(g)p_{\mathfrak{u}})\neq 0\},
\end{align*}
where $\pi_{\lambda}$ is the natural projection to the weight subspace with weight $\lambda$. For any $\lambda\in \Psi^g_{\mathfrak{u}}$, denote
\begin{align*}
    d^g_{\mathfrak{u},\lambda}:=-\log(\norm{\pi_{\lambda}(\rho_j(g)p_{\mathfrak{u}})})-C_0.
\end{align*}
Let $\mathfrak{u}\in \mathfrak{R}_j$ and $a\in A$.
Assume $a\in U_{\mathfrak{u}}^{A,g}$, then $ag\in U_{\mathfrak{u}}^M$. By Corollary \ref{Corollary: If g in U_u^M, then Ad(mg)u is contained in span...}, there exists $m\in M$ such that $\mathrm{Ad}(mag)\mathfrak{u}\subset \mathrm{Span}_{\mathbb{R}}(\mathfrak{g}_{mag}\cap W_0)$. As $\boldsymbol{M}$ is semisimple and $g^{-1}\boldsymbol{M}g\subset \boldsymbol{P}_{\mathfrak{u}}$, we have $\rho_j(mag)p_{\mathfrak{u}}=\rho_j(ag)p_{\mathfrak{u}}$. Therefore, by (\ref{Align: consequence of being in small neighborhood of 0}),
\begin{align}\label{align: consequence of rho_j(ag)p_u being in a small neighborhood of 0}
    \norm{\rho_j(ag)p_{\mathfrak{u}}}<e^{-C_0}.
\end{align}
On the other hand, for every $\lambda \in \Psi^g_{\mathfrak{u}}$, we have 
\begin{align*}
    \norm{\rho_j(ag)p_{\mathfrak{u}}}&\geq \norm{\pi_{\lambda}(\rho_j(ag)p_{\mathfrak{u}})}= e^{\lambda(a)}\norm{\pi_{\lambda}(\rho_j(g))p_{\mathfrak{u}}}=e^{\lambda(a)-d^g_{\mathfrak{u},\lambda}-C_0}.
\end{align*}
With (\ref{align: consequence of rho_j(ag)p_u being in a small neighborhood of 0}), the above estimate shows that $\lambda(a)<d^g_{\mathfrak{u},\lambda}$. This finishes the proof of $(1)$.

Let us assume that $(2)$ is false. Then there are compact set $K\subset A$ and $\{\mathfrak{u}_i\in \mathfrak{R}: i\in \mathbb{N}\}$ such that $U_{\mathfrak{u}_i,0}^{A,g}\cap K\neq \emptyset$ for all $i\in \mathbb{N}$. By passing to a subsequence, we may assume that there is $1\leq j \leq r$ such that $\mathfrak{u}_i\in \mathfrak{R}_j$ for all $i\in \mathbb{N}$. By $(1)$ of the present proposition, for each $i$, we can write
\begin{align*}
    \rho_j(g)p_{\mathfrak{u}_i}=\sum_{\lambda\in \Psi_{\mathfrak{u}_i}^g} \pi_{\lambda}(\rho_j(g)p_{\mathfrak{u}_i}).
\end{align*}
For any $i\in \mathbb{N}$, there is $a_i\in K\cap U_{u_i,0}^{A,g}$, and we have
\begin{align}\label{align: estimate in proof of (2) in covering theorem}
    \rho_j(a_i g)p_{\mathfrak{u}_i}=\sum_{\lambda\in \Psi_{\mathfrak{u}_i}^g} e^{\lambda(a_i)}\pi_{\lambda}(\rho_j(g)p_{\mathfrak{u}_i}).
\end{align}
By definition of $U_{u_i,0}^{A,g}$, $\lambda(a_i)<-\log(\pi_{\lambda}(\rho_j(g)p_{\mathfrak{u}_i}))-C_0$ for all $\lambda\in \Psi_{\mathfrak{u}_i}^g$. Therefore, by (\ref{align: estimate in proof of (2) in covering theorem}), there exists $C_0^{\prime}>0$ such that for all $i\in \mathbb{N}$,
\begin{align*}
    \norm{ \rho_j(a_i g)p_{\mathfrak{u}_i}}<C_0^{\prime}.
\end{align*}
Since $K$ is compact, there exist $C_0^{\prime \prime}>C_0^{\prime}$ such that for all $i\in \mathbb{N}$,
\begin{align*}
    \norm{ \rho_j( g)p_{\mathfrak{u}_i}}<C_0^{\prime \prime},
\end{align*}
which is contrary to the discreteness of the set $\{\rho_j(g)p_{\mathfrak{u}_i}:i\in \mathbb{N}\}$. Hence $(2)$ holds.

Now let us prove (3). Assume that $\bigcap_{j=1}^k U_{\mathfrak{v}_j}^{A,g}$ is nonempty. By Lemma \ref{Lemma: consequence of nonempty intersection}, there exist $g_0\in G$ and $\{i_1,\cdots,i_k\}\subset \{1,\cdots,r\}$ such that $\boldsymbol{P}_{\mathfrak{v}_j}=g_0 \boldsymbol{P}_{i_j} g_0^{-1}$, and
\begin{align*}
    \boldsymbol{M}\subset g g_0 \boldsymbol{P}_{i_j} g_0^{-1} g^{-1},\forall j=1,\cdots,k.
\end{align*}
Applying Proposition \ref{Proposition: consequence of M contained in parabolic subgroup} with $g g_0$ in place of $g$ there, we find $w\in \mathrm{W}(G)$, $h\in \mathrm{Z}_G(M)$, and $u\in \bigcap_{j=1}^k\boldsymbol{P}_{i_j}$ such that
\begin{itemize}
    \item $g g_0=h w u$;
    \item $w^{-1}\boldsymbol{M}w \subset \bigcap_{j=1}^{k} \boldsymbol{P_{i_j}}$;
    \item $\bigcap_{j=1}^k w\boldsymbol{P}_{i_j}w^{-1} \cap \boldsymbol{Z_G(M)}$ is a parabolic subgroup of $\boldsymbol{Z_G(M)}$;
    
    \item $w(\chi_{i_1}),\cdots,w(\chi_{i_k})$ restricted to $D^{\circ}$ are linearly independent.
\end{itemize}

Take a $\mathbb{R}$-minimal parabolic subgroup $\boldsymbol{Q}_0$ of $\boldsymbol{\mathrm{Z}_G(M)}$ containing $\boldsymbol{D}$.
For each $1\leq j\leq k$,
applying Proposition \ref{Proposition: consequence of a Bruhat type decomposition in linear representation} to $\boldsymbol{\mathrm{Z}}_{\boldsymbol{G}}(\boldsymbol{M})$, $\boldsymbol{D}$, $\boldsymbol{Q}_{0}$, $\rho_{j}$, $\rho_{j}(w)p_{i_j}$ and $w(\chi_{i_j})$ in place of $\boldsymbol{L}$, $\boldsymbol{D}$, $\boldsymbol{Q}_0$, $\rho$, $v_0$, and $\chi_0$, we obtain a $c>0$ such that for any $h\in \mathrm{Z}_G(M)$, there is $w^{\prime}\in \mathrm{W}(\mathrm{Z}_G(M))$ such that
\begin{align*}
    \norm{\rho_{j}(hw)p_{i_j}}\leq c\norm{\pi_{w^{\prime} w(\chi_{i_j})}(\rho_{j}(hw)p_{i_j})}.
\end{align*}
Thus, we have
\begin{align*}
   0\neq\norm{\rho_{j}(g)p_{\mathfrak{v}_j}}&=\norm{\rho_j(g g_0)p_{i_j}}\\
        &=\norm{\rho_j(hwu)p_{i_j}}\\
        &\leq c\norm{\pi_{w^{\prime} w (\chi_{i_j})}(\rho_j(hw)p_{i_j})}\\
        &=c \norm{\pi_{w^{\prime} w (\chi_{i_j})}(\rho_j(g)p_{\mathfrak{v}_j})}.
\end{align*}
Therefore, we have $w^{\prime} w (\chi_{i_j})\in \Psi_{\mathfrak{v}_j}^g$, for $j=1,\cdots,k$. By (1) of Proposition \ref{Proposition: a locally finite cover}, there are real number $c_1,\cdots,c_k$ such that for any $a\in  U_{\mathfrak{v}_j}^{A,g}$, $w^{\prime}w(\chi_{i_j})(a)<c_j$ for $j=1,\cdots,k$. As a consequence, we have
\begin{align*}
    \bigcap_{j=1}^k U_{\mathfrak{v}_j}^{A,g}\subset \{a\in A: w^{\prime}w(\chi_{i_j})<c_j,j=1,\cdots,k\}.
\end{align*}
Since $w^{\prime}\in \mathrm{W}(\mathrm{Z}_G(M))$, and $w(\chi_{i_1}),\cdots,w(\chi_{i_k})$ are linearly independent on $D^{\circ}$, we conclude that $w^{\prime}w(\chi_{i_1}),\cdots,w^{\prime}w(\chi_{i_k})$ are also linearly independent on $D^{\circ}$.
\end{proof}

\section{Some linear algebra lemmas}\label{Section: Some linear algebra lemmas}
In this section, we will prove some simple, yet useful linear algebra lemmas, which will be crucial in the course of establishing Theorem \ref{Theorem: main theorem}.

In the following, $(\cdot|\cdot)$ denotes a strictly positive definite symmetric bilinear form in a real vector space $V$, with $\dim V=k$ for some $k\geq 1$. For any linear subspace $U\subset V$, we denote by $\pi_{U}$ the corresponding orthogonal projection map from $V$ to $U$, and $U^{\perp}$ the orthogonal complement to $U$ with respect to $(\cdot|\cdot)$.

\begin{lemma}\label{Lemma: a linear algebra lemma}
Let $v_1,\cdots,v_k\in V$ be $k$ linearly independent vectors. Let $\lambda_1,\cdots,\lambda_k$ be $k$ linear functionals on $V$ satisfying
\begin{align*}
    \lambda_i(v_j)=\delta_{ij}, \forall 1\leq i,j\leq k,
\end{align*}
where $\delta_{ij}$ equals $1$ if $i=j$ and zero otherwise. Furthermore, we define the finite set
\begin{align*}
    \Sigma:=\{\boldsymbol{\sigma}=(\sigma_1,\cdots,\sigma_k):\sigma_i=\pm 1,i=1,\cdots,k\},
\end{align*}
and for each $\boldsymbol{\sigma}\in \Sigma$,
\begin{align*}
    V_{\boldsymbol{\sigma}}:=\{v\in V: (\mathrm{sign}(\lambda_1(v)),\cdots,\mathrm{sign}(\lambda_k(v)))=\boldsymbol{\sigma}\}.
\end{align*}
Then for any choice of $v_{\boldsymbol{\sigma}}\in V_{\boldsymbol{\sigma}}$ for each $\boldsymbol{\sigma}\in \Sigma$, we have
$\mathrm{Span}_{\mathbb{R}}\{v_{\boldsymbol{\sigma}}:\boldsymbol{\sigma}\in \Sigma\}=V$.
\end{lemma}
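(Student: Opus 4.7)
The plan is to argue by contradiction using duality. Since $v_1,\dots,v_k$ are linearly independent and $V$ is $k$-dimensional, they form a basis of $V$, and the conditions $\lambda_i(v_j)=\delta_{ij}$ force $\lambda_1,\dots,\lambda_k$ to be exactly the dual basis. In particular every $\mu\in V^{*}$ admits a unique expansion $\mu=\sum_{i=1}^{k}c_i\lambda_i$, and $V_{\boldsymbol{\sigma}}$ is precisely the open ``orthant'' consisting of those $v\in V$ whose $\lambda$-coordinates have the prescribed signs $\sigma_1,\dots,\sigma_k$. In particular each $V_{\boldsymbol{\sigma}}$ is nonempty (e.g.\ $\sum_i\sigma_i v_i\in V_{\boldsymbol{\sigma}}$), so the chosen vectors $v_{\boldsymbol{\sigma}}$ really do exist.

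Assume, aiming at a contradiction, that $W:=\mathrm{Span}_{\mathbb{R}}\{v_{\boldsymbol{\sigma}}:\boldsymbol{\sigma}\in\Sigma\}$ is a proper subspace of $V$. Then there is a nonzero linear functional $\mu\in V^{*}$ vanishing on $W$. Write
\[
\mu=\sum_{i=1}^{k}c_i\lambda_i,
\]
with at least one $c_i\neq 0$. The idea is to pick a single $\boldsymbol{\sigma}\in\Sigma$ adapted to the signs of the coefficients $c_i$, namely take
\[
\sigma_i:=\operatorname{sign}(c_i) \text{ whenever } c_i\neq 0,
\]
and set $\sigma_i=+1$ otherwise. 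For this $\boldsymbol{\sigma}$ the vector $v_{\boldsymbol{\sigma}}\in V_{\boldsymbol{\sigma}}$ satisfies $\operatorname{sign}(\lambda_i(v_{\boldsymbol{\sigma}}))=\sigma_i$, so each summand
\[
c_i\,\lambda_i(v_{\boldsymbol{\sigma}})=|c_i|\cdot|\lambda_i(v_{\boldsymbol{\sigma}})|\ge 0,
\]
and the inequality is strict whenever $c_i\neq 0$ (since then $\sigma_i=\pm 1$ forces $\lambda_i(v_{\boldsymbol{\sigma}})\neq 0$). As at least one $c_i$ is nonzero, we conclude $\mu(v_{\boldsymbol{\sigma}})=\sum_i c_i\lambda_i(v_{\boldsymbol{\sigma}})>0$, contradicting $\mu(v_{\boldsymbol{\sigma}})=0$. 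Hence $W=V$.

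I do not expect any real obstacle: the heart of the proof is the simple observation that separating all $2^k$ open orthants by a single linear functional is impossible unless that functional is zero, which is exactly what the sign-matching trick captures. The only thing to double-check while writing the proof is that the definition of $V_{\boldsymbol{\sigma}}$ truly forces $\lambda_i(v_{\boldsymbol{\sigma}})$ to be nonzero (so that $\operatorname{sign}$ is well defined as $\pm 1$), which is built into the statement.
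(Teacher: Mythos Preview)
Your proof is correct and follows the same overall strategy as the paper's: assume the span is proper, pick a nonzero witness annihilating all the $v_{\boldsymbol{\sigma}}$, and choose $\boldsymbol{\sigma}$ by sign-matching to produce a strictly positive pairing, yielding a contradiction.

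The difference is in how the witness is chosen. The paper uses the ambient inner product $(\cdot|\cdot)$ on $V$: it takes a nonzero $v\in U^{\perp}$, expands everything in the basis $v_1,\dots,v_k$, and then has to pass through the Gram matrix $((v_i|v_j))$ to see that the relevant coefficient vector is nonzero before doing the sign-matching. You instead work directly in $V^{*}$, take a nonzero annihilating functional $\mu=\sum c_i\lambda_i$, and match $\sigma_i$ to $\operatorname{sign}(c_i)$. Your route is cleaner: it uses nothing but duality, avoids the inner product and the Gram-matrix step entirely, and makes the sign-matching completely transparent. The paper's version gains nothing extra from the bilinear form here; it is just an artifact of the section's standing hypotheses.
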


\begin{proof}
Let $U=\mathrm{Span}_{\mathbb{R}}\{v_{\boldsymbol{\sigma}}:\boldsymbol{\sigma}\in \Sigma\}$. Suppose that $U\neq V$, then the orthogonal complement $U^{\perp}\neq 0$. Choose a nonzero $v\in U^{\perp}$. Since $v_1,\cdots,v_k$ are linearly independent, we can find $a_1,\cdots,a_k\in \mathbb{R}$ such that $v=\sum_{i=1}^k a_i v_i$. Similarly, for each $\boldsymbol{\sigma}\in \Sigma$, we find $b_1^{\boldsymbol{\sigma}},\cdots,b_k^{\boldsymbol{\sigma}}\in \mathbb{R}$ such that $v_{\boldsymbol{\sigma}}=\sum_{i=1}^k b_i^{\boldsymbol{\sigma}} v_i$. By the assumption on linear functionals $\lambda_i$'s, we have $(\mathrm{sign}(b_1^{\boldsymbol{\sigma}}),\cdots,\mathrm{sign}(b_k^{\boldsymbol{\sigma}}))=\boldsymbol{\sigma}$. Note that for any $\boldsymbol{\sigma}\in \Sigma$,
\begin{align}\label{align: contradiction to (v|v_{sigma})=0}
    (v|v_{\boldsymbol{\sigma}})=(\sum_{i=1}^k a_i v_i|\sum_{j=1}^k b_j^{\boldsymbol{\sigma}} v_j)=\sum_{j=1}^k b_j^{\boldsymbol{\sigma}}\sum_{i=1}^k a_i(v_i| v_j).
\end{align}
Since $v_1,\cdots,v_k$ are linearly independent, and $(\cdot|\cdot)$ is a strictly positive definite symmetric bilinear form, the matrix $((v_i|v_j))_{1\leq i,j\leq k}$ is nonsingular. As $v$ is nonzero, at least one of $a_i$'s is nonzero. Therefore, 
\begin{align*}
    (a_1,\cdots,a_k)\cdot((v_i|v_j))_{1\leq i,j\leq k}=(\sum_{i=1}^k a_i(v_i|v_1),\cdots,\sum_{i=1}^k a_i(v_i|v_k))\neq 0.
\end{align*}
Thus we can choose $\boldsymbol{\sigma}\in \Sigma$ such that (\ref{align: contradiction to (v|v_{sigma})=0}) is nonzero, contrary to $(v|v_{\boldsymbol{\sigma}})=0$. This shows that $U^{\perp}=0$ and proves the lemma.
\end{proof}

\begin{proposition}\label{Proposition: consequence of A being a proper subspace}
Let $U$ be a proper linear subspace of $V$. 
Let $\{\lambda_i\}_{i=1,...,k}$ be linearly independent  linear functionals on $V$.
Then there exists $v\in V$ such that for any $N>0$, and any $u\in U$, there exists $\lambda_{j(u)}$ for some $1\leq j(u) \leq k$ such that
\begin{align*}
    |\lambda_{j(u)}(u+N\cdot v)|>N.
\end{align*}
\end{proposition}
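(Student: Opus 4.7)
The approach is to recast the claim as a geometric statement about the image of $U$ in $\R^k$. Introduce the linear map $L : V \to \R^k$ defined by $L(x) := (\lambda_1(x), \ldots, \lambda_k(x))$; since $\lambda_1,\ldots,\lambda_k$ are linearly independent on $V$, $L$ is surjective. For $u \in U$ and $N > 0$, setting $u' := u/N$ (which still lies in $U$, as $U$ is a linear subspace) we have
\[
\frac{|\lambda_j(u+Nv)|}{N} = |\lambda_j(u') + \lambda_j(v)|,
\]
so the desired conclusion is equivalent to the assertion: \emph{for every $w \in L(U)$, we have $\|L(v) + w\|_\infty > 1$}, i.e., $L(v)$ has $L^\infty$-distance strictly greater than $1$ from the subspace $L(U) \subset \R^k$. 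The quantifier over $N > 0$ has been absorbed into the quantifier over $u \in U$.

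The next step is to exhibit such an $L(v)$. The point where I would use the hypothesis that $U$ is a proper subspace (combined with the ambient application context, where either $\dim V = k$ or the restrictions $\lambda_i|_U$ are linearly dependent, so that $L(U) \subsetneq \R^k$) is to produce a nonzero linear functional $c = (c_1,\ldots,c_k) \in (\R^k)^*$ whose kernel contains $L(U)$. Pick any $y \in \R^k$ with $c(y) > \|c\|_1 := \sum_i |c_i|$, and let $v \in V$ be any preimage $L(v) = y$ (which exists by surjectivity of $L$). For every $w \in L(U)$ one has $c(y+w) = c(y) > \|c\|_1$, and since $|c(z)| \le \|c\|_1 \|z\|_\infty$ for any $z \in \R^k$, this forces $\|y + w\|_\infty > 1$.

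Unwinding the equivalence from the first paragraph then shows that this $v$ has the property required by the proposition: for any $u \in U$ and $N > 0$, some coordinate $|\lambda_{j(u)}(u/N + v)|$ exceeds $1$, hence $|\lambda_{j(u)}(u + Nv)| > N$. The only mildly subtle point is the first reformulation (checking that the scaling is correct and that quantification over $N$ is harmless), and the use of a $c$ vanishing on $L(U)$; the rest is routine linear algebra. I expect the main conceptual content to be precisely that observation that a half-space of the form $\{z : c(z) \geq \text{large}\}$ in $\R^k$ is disjoint from any bounded $L^\infty$-neighborhood of $\ker c$, giving an explicit mechanism to push $L(v)$ far from $L(U)$.
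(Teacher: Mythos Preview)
Your argument is correct. Since $\dim V = k$ (stated in the section preamble, which you correctly flag as the needed ``ambient context''), the map $L$ is an isomorphism, so $L(U)\subsetneq\R^k$; the rest---picking a functional $c$ vanishing on $L(U)$, pushing $y=L(v)$ far enough in the $c$-direction, and using $|c(z)|\le \|c\|_1\|z\|_\infty$---is clean and complete. The rescaling step absorbing the $N$-quantifier is also fine.

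The paper's proof takes a different route. It invokes the preceding Lemma~\ref{Lemma: a linear algebra lemma}, which says that a choice of one vector from each open sign-orthant $V_{\boldsymbol\sigma}=\{x:\operatorname{sign}\lambda_i(x)=\sigma_i\}$ must span $V$; hence a proper subspace $U$ misses some orthant $V_{\boldsymbol\sigma_0}$ entirely. One then picks $v\in V_{\boldsymbol\sigma_0}$ with $\min_j|\lambda_j(v)|>1$, and for each $u\in U$ observes that at some coordinate $j(u)$ the signs of $\lambda_{j(u)}(u)$ and $\lambda_{j(u)}(v)$ do not cancel, giving $|\lambda_{j(u)}(u+Nv)|\ge N|\lambda_{j(u)}(v)|>N$. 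Your approach bypasses this orthant lemma in favor of a single separating functional and the $\ell^1$--$\ell^\infty$ duality inequality, which is shorter and more transparent. The paper's approach, on the other hand, makes the geometry more explicit: it tells you that $v$ can be taken in a specific open orthant, and that the index $j(u)$ is the coordinate where $-u$ fails to lie in that orthant. Both are elementary; yours is the more economical packaging.
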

\begin{proof}
 Find $\{v_i\}_{i=1,...,k}$  satisfying $\lambda_i(v_j)=\delta_{ij}$ as in Lemma \ref{Lemma: a linear algebra lemma}, so they are linearly independent.
Also let $\Sigma$ and $\{V_{\boldsymbol{\sigma}}:\boldsymbol{\sigma}\in \Sigma\}$ be as in Lemma \ref{Lemma: a linear algebra lemma}. Since $U$ is proper, by Lemma \ref{Lemma: a linear algebra lemma}, there exists $\boldsymbol{\sigma}_0\in \Sigma$ such that $U\cap V_{\boldsymbol{\sigma}_0}=\emptyset$. Choose $v\in V_{\boldsymbol{\sigma}_0}$ such that $\min_{j=1,\cdots,k}|\lambda_j(v)|>1.$

Since $U\cap V_{\boldsymbol{\sigma}_0}=\emptyset$, for any $u\in U$, there exists $1\leq j(u)\leq k$ such that $\mathrm{sign}(\lambda_{j(u)}(u))\neq -\mathrm{sign}(\lambda_{j(u)}(v))$, where by convention we set $\mathrm{sign}(0)=0$. Hence, either $\lambda_{j(u)}(u)=0$, or $\lambda_{j(u)}(u)$ has the same sign as $\lambda_{j(u)}(v)$ does. Therefore, by the choice of $v$, we obtain
\begin{align*}
     |\lambda_{j(u)}(u+N\cdot v)|>N.
\end{align*}
\end{proof}

\begin{lemma}\label{Lemma: projection is onto if...}
Let $U$, $W$ be two linear subspaces of $V$. Assume that $\dim U=m\leq \dim W$. Then $\dim \pi_W(U)=\dim U$ if and only if $\pi_U(W)=U$.
\end{lemma}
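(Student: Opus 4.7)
The proof plan is entirely linear-algebraic, based on analyzing the kernels of the two orthogonal projections and relating them via a dimension count.

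First, I would observe that for any subspace $X\subset V$ and vector $v\in V$, one has $\pi_X(v)=0$ if and only if $v\in X^{\perp}$. Applying this to the two projections in question yields
\begin{equation*}
\ker\bigl(\pi_W|_U\bigr)=U\cap W^{\perp},\qquad
\ker\bigl(\pi_U|_W\bigr)=W\cap U^{\perp}.
\end{equation*}
Consequently, the hypothesis $\dim\pi_W(U)=\dim U=m$ is equivalent to $U\cap W^{\perp}=\{0\}$, while the conclusion $\pi_U(W)=U$ is equivalent to $\dim(W\cap U^{\perp})=\dim W-m$ (using $\dim\pi_U(W)=\dim W-\dim\ker(\pi_U|_W)$ and $\pi_U(W)\subset U$).

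Next, I would establish the key identity
\begin{equation*}
\dim(U\cap W^{\perp})-\dim(W\cap U^{\perp})=\dim U-\dim W.
\end{equation*}
To prove this, apply the standard dimension formula to the sum $U+W^{\perp}$:
\begin{equation*}
\dim(U+W^{\perp})=\dim U+\dim W^{\perp}-\dim(U\cap W^{\perp})=m+(n-k)-\dim(U\cap W^{\perp}),
\end{equation*}
where $n=\dim V$ and $k=\dim W$. On the other hand, using $(U+W^{\perp})^{\perp}=U^{\perp}\cap W$, one has $\dim(U+W^{\perp})=n-\dim(U^{\perp}\cap W)$. Equating the two expressions and simplifying gives the identity.

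Finally, I would combine these ingredients: the identity shows that $U\cap W^{\perp}=\{0\}$ holds if and only if $\dim(W\cap U^{\perp})=k-m=\dim W-\dim U$, which is precisely the condition $\pi_U(W)=U$. This establishes both implications simultaneously. There is no real obstacle here; the only thing to watch is to keep the roles of $U$ and $W$ straight and to invoke the asymmetric hypothesis $m\leq k$ implicitly through the interpretation of $\pi_U(W)=U$ as a surjectivity (rather than just a dimension) statement.
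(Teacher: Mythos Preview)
Your proof is correct and takes a genuinely different route from the paper's. The paper argues directly with bases and the composite map $\pi_U\circ\pi_W$: starting from a basis $u_1,\dots,u_m$ of $U$ with $\pi_W(u_1),\dots,\pi_W(u_m)$ linearly independent, it shows that $\pi_U(\pi_W(u_1)),\dots,\pi_U(\pi_W(u_m))$ are also linearly independent, the key point being the inner-product computation $(u\mid\pi_W(u))=(\pi_W(u)\mid\pi_W(u))$, which forces $\pi_W(u)=0$ whenever $\pi_W(u)\in U^{\perp}$ for some $u\in U$. The converse is handled by the symmetric argument. Your approach, by contrast, is a clean dimension count: identifying the two conditions with $U\cap W^{\perp}=\{0\}$ and $\dim(W\cap U^{\perp})=\dim W-m$, and then linking these via the identity $\dim(U\cap W^{\perp})-\dim(W\cap U^{\perp})=\dim U-\dim W$. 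This handles both implications at once and is arguably more transparent; the paper's argument, on the other hand, is slightly more constructive in that it exhibits an explicit basis of $\pi_U(W)$, namely $\{\pi_U(\pi_W(u_i))\}$.
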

\begin{proof}

Assume that $\dim \pi_W(U)=\dim U$, then we may choose $m$ linearly independent vectors $u_1,\cdots,u_m\in U$ such that $\pi_W(u_1),\cdots,\pi_W(u_m)$ are linearly independent. Suppose that $\pi_U(\pi_W(u_1)),\cdots,\pi_U(\pi_W(u_m))$ are not linearly independent, then we can find $a_1,\cdots,a_m\in \mathbb{R}$ such that at least one of them is nonzero, and $\pi_U(\pi_W(\sum_{i=1}^m a_i u_i))=0$. Therefore, $0\neq\pi_W(\sum_{i=1}^m a_i u_i)\in U^{\perp}$. Write $u=\sum_{i=1}^m a_i u_i$. By the decomposition $u=\pi_W(u)+\pi_{W^{\perp}}(u)$, we have 
\begin{align*}
    0=(u|\pi_W(u))=(\pi_W(u)+\pi_{W^{\perp}}(u)|\pi_W(u))=(\pi_W(u)|\pi_W(u)),
\end{align*}
which implies that $\pi_W(u)=0$, hence leads to a contradiction. Therefore, $\pi_U(\pi_W(u_1))$, $\cdots$, $\pi_U(\pi_W(u_m))$ are linearly independent, and so $\pi_U(W)=U$.

Conversely, assume that $\pi_U(W)=U$. We can choose $m$ linearly independent vectors $w_1,\cdots,w_m\in W$ such that $\mathrm{Span}_{\mathbb{R}}\{\pi_U(w_1),\cdots,\pi_U(w_m)\}=U$. By the same argument as above, we can show that $\pi_W(\pi_U(w_1)),\cdots,\pi_W(\pi_U(w_m))$ are  linearly independent, which implies that $\dim \pi_W(U)=\dim(U)$.
\end{proof}

\begin{corollary}\label{Corollary: If linear functionals are linearly independent, then ...}
Let $\lambda_1,\cdots,\lambda_m$ be $m$ linearly independent linear functionals on $V$. Let $u_1,\cdots,u_m\in V$ be such that $\lambda_i(v)=(u_i|v)$ for any $v\in V$. Denote $U=\mathrm{Span}_{\mathbb{R}}\{u_1,\cdots,u_m\}$. Let $W\subset V$ be a linear subspace. Then $\lambda_1,\cdots,\lambda_m$ restricted to $W$ are linearly independent if and only if $\pi_U(W)=U$. 
\end{corollary}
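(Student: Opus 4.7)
The plan is to reduce the corollary to Lemma~\ref{Lemma: projection is onto if...} by translating linear independence of the restrictions $\lambda_1|_W, \ldots, \lambda_m|_W$ into the statement that $\pi_W$ restricted to $U$ is injective, which is the same as $\dim \pi_W(U) = \dim U$.

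First I would observe that since the functionals $\lambda_1,\ldots,\lambda_m$ are linearly independent and each $u_i$ is their Riesz representative with respect to $(\cdot|\cdot)$, the vectors $u_1,\ldots,u_m$ are themselves linearly independent, so $\dim U = m$. The central observation is then the following chain of equivalences. A linear relation $\sum_i c_i \lambda_i|_W = 0$ unfolds, via $\lambda_i(w) = (u_i|w)$, to $\bigl(\sum_i c_i u_i \bigm| w\bigr) = 0$ for every $w \in W$, i.e.\ $\sum_i c_i u_i \in U \cap W^{\perp}$. Since the $u_i$ are independent, a nontrivial such relation exists precisely when $U \cap W^{\perp} \neq \{0\}$. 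Hence $\lambda_1|_W,\ldots,\lambda_m|_W$ are linearly independent if and only if $U \cap W^{\perp} = \{0\}$, which in a finite-dimensional setting is equivalent to $\pi_W|_U$ being injective, i.e.\ $\dim \pi_W(U) = m = \dim U$.

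To close the loop I would invoke Lemma~\ref{Lemma: projection is onto if...} with its hypothesis $\dim U \leq \dim W$; under that hypothesis the lemma gives $\dim \pi_W(U) = \dim U \iff \pi_U(W) = U$, yielding the corollary. The dimension hypothesis is forced by either side of the desired equivalence: if $\lambda_1|_W,\ldots,\lambda_m|_W$ are linearly independent, the induced map $W \to \R^m$ is surjective, so $\dim W \geq m$; if $\pi_U(W) = U$, then $\dim W \geq \dim U = m$. In the opposite case $\dim W < m$, both sides of the equivalence automatically fail and there is nothing to prove.

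I do not anticipate any real obstacle here; the only points that require care are the Riesz-representative identification that converts the statement about functionals into a statement about orthogonal complements, and the dimension bookkeeping needed to legitimately apply Lemma~\ref{Lemma: projection is onto if...}.
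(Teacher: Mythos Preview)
Your proposal is correct and follows essentially the same route as the paper: both translate linear independence of the $\lambda_i|_W$ into the condition $\dim \pi_W(U)=\dim U$ (the paper phrases this as linear independence of the $\pi_W(u_i)$, you phrase it via $U\cap W^\perp=\{0\}$, which is the same thing), and then invoke Lemma~\ref{Lemma: projection is onto if...}. If anything, your treatment is slightly more careful than the paper's, since you explicitly verify the dimension hypothesis $\dim U\leq \dim W$ required by that lemma, whereas the paper applies the lemma without comment.
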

\begin{proof}
First we note that $\lambda_1,\cdots \lambda_m$ are linearly independent on $W$ if and only if $\pi_W(u_1)$, $\cdots$, $\pi_W(u_m)$ are linearly independent.

Suppose that $\lambda_1,\cdots \lambda_m$ are linearly independent on $W$, then $\pi_W(u_1)$, $\cdots$, $\pi_W(u_m)$ are linearly independent. By Lemma \ref{Lemma: projection is onto if...}, $\pi_U(W)=U$.

Conversely, suppose that $\pi_U(W)=U$, again by Lemma \ref{Lemma: projection is onto if...}, $\dim \pi_W(U)=\dim U$. Therefore, $\pi_W(u_1),\cdots,\pi_W(u_m)$ are linearly independent, so are $\lambda_1$, $\cdots$, $\lambda_m$ restricted to $W$.
\end{proof}

\begin{lemma}\label{Lemma: integral matrix has integral kernel}
Let $\lambda_1,\cdots,\lambda_m$ be $m$ linear functionals on $V$ (not necessarily linearly independent). Let $u_1,\cdots,u_k\in V$ be such that $\lambda_i(u_j)\in \mathbb{Z}$ for any $1\leq i\leq m$, $1\leq j\leq k$. Denote $U=\mathrm{Span}_{\mathbb{R}}(u_1,\cdots,u_k)$. Suppose that there exists $(a_1,\cdots,a_m)\in \mathbb{R}^m\setminus \{0\}$ such that $\sum_{i=1}^m a_i\lambda_i\equiv 0$ when restricted to $U$. Then there exists $(b_1,\cdots,b_m)\in \mathbb{Z}^m\setminus \{0\}$ such that $\sum_{i=1}^m b_i\lambda_i\equiv 0$ when restricted to $U$.
\end{lemma}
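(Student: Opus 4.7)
The plan is to reduce this to the standard fact that a system of homogeneous linear equations with rational coefficients has a real nontrivial solution if and only if it has a rational (equivalently, integer) nontrivial solution.

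First I would form the integer matrix $M \in M_{m\times k}(\mathbb{Z})$ defined by $M_{ij} := \lambda_i(u_j)$. Since $U = \mathrm{Span}_{\mathbb{R}}(u_1,\dots,u_k)$, the condition that $\sum_{i=1}^m c_i \lambda_i \equiv 0$ on $U$ is equivalent, by linearity, to the vanishing on each generator $u_j$, i.e.\ to
\begin{equation*}
\sum_{i=1}^m c_i M_{ij} = 0, \qquad j=1,\dots,k.
\end{equation*}
Thus the set of coefficient tuples $(c_1,\dots,c_m)$ with $\sum c_i\lambda_i\equiv 0$ on $U$ is precisely the left null space $\mathcal{N}(M) \subset \mathbb{R}^m$ of $M$, viewed as a real linear subspace.

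Next I would observe that because $M$ has rational (in fact integer) entries, the left null space $\mathcal{N}(M)$ of $M$ over $\mathbb{R}$ satisfies
\begin{equation*}
\mathcal{N}(M) \;=\; \mathcal{N}_{\mathbb{Q}}(M)\otimes_{\mathbb{Q}}\mathbb{R},
\end{equation*}
where $\mathcal{N}_{\mathbb{Q}}(M)$ denotes the left null space computed over $\mathbb{Q}$. This is standard: by Gaussian elimination over $\mathbb{Q}$, one obtains a $\mathbb{Q}$-basis of $\mathcal{N}_{\mathbb{Q}}(M)$ which remains linearly independent over $\mathbb{R}$ and spans $\mathcal{N}(M)$. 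In particular, $\dim_{\mathbb{R}}\mathcal{N}(M) = \dim_{\mathbb{Q}}\mathcal{N}_{\mathbb{Q}}(M)$.

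By hypothesis the real vector $(a_1,\dots,a_m)\in \mathcal{N}(M)$ is nonzero, so $\mathcal{N}(M)\neq 0$ and therefore $\mathcal{N}_{\mathbb{Q}}(M)\neq 0$ as well. Picking any nonzero rational vector $(q_1,\dots,q_m)\in \mathcal{N}_{\mathbb{Q}}(M)$ and clearing denominators by multiplying through by a common positive integer $N$, we obtain a nonzero integer vector $(b_1,\dots,b_m) := (Nq_1,\dots,Nq_m) \in \mathbb{Z}^m\setminus\{0\}$ still annihilating every column of $M$. Translating back, $\sum_{i=1}^m b_i \lambda_i(u_j) = 0$ for every $j$, and hence $\sum_{i=1}^m b_i\lambda_i \equiv 0$ on $U$, which is the desired conclusion.

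There is no serious obstacle here; the lemma is purely a change-of-base-field statement for a $\mathbb{Z}$-linear system, and the only point that needs care is to set up the matrix $M$ correctly so that the vanishing condition on $U$ really becomes a linear system in $(c_1,\dots,c_m)$ with integer coefficients.
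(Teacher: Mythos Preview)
Your proof is correct and follows essentially the same approach as the paper: both form the integer matrix $(\lambda_i(u_j))$, identify the vanishing condition with membership in its left null space, and invoke the standard fact that an integer matrix has a rational (hence integer, after clearing denominators) basis for its kernel. You spell out the base-change step in slightly more detail, but the argument is the same.
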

\begin{proof}
Consider the $m$ by $k$ matrix
\[
C=(\lambda_i(u_j))_{1\leq i\leq m, 1\leq j\leq k}.
\]
Since $(a_1,\cdots,a_m)\neq 0$, and $(a_1,\cdots,a_m)\cdot C=0$, $\mathrm{Ker}(C)\neq 0$. It is well known that the kernel of an integral matrix is spanned by integral vectors. Therefore, there exists nonzero $(b_1,\cdots,b_m)\in \mathbb{Z}^m\cap \mathrm{Ker}(C)$.
\end{proof}

\begin{corollary}\label{Corollary: linear dependency in algebraic torus}
Let $\boldsymbol{A}$ be an algebraic $\mathbb{R}$-split torus. Let $\lambda_1,\cdots,\lambda_m$ be $\mathbb{R}$-algebraic characters on $\boldsymbol{A}$. Suppose that there exists $(a_1,\cdots,a_m)\in \mathbb{R}^m\setminus\{0\}$ such that $\sum_{i=1}^m a_i\lambda_i\equiv 0$ on $\boldsymbol{A}$, then there exists $(b_1,\cdots,b_m)\in \mathbb{Z}^m\setminus\{0\}$ such that $\sum_{i=1}^m b_i \lambda_i\equiv 0$ on $\boldsymbol{A}$.
\end{corollary}
\begin{proof}
As $\boldsymbol{A}$ is an algebraic $\mathbb{R}$-split torus and $\lambda_1,\cdots,\lambda_m$ are $\mathbb{R}$-algebraic characters on $\boldsymbol{A}$, there exist $u_1,\cdots,u_n\in \mathrm{Lie}(A)$ such that $\lambda_i(u_j)\in \mathbb{Z}$ and $Span_{\mathbb{R}}(u_1,\cdots,u_n)=\mathrm{Lie}(A)$, see e.g. \cite[Proposition 8.6]{Borel_1991_Linear_algebraic_groups_MR1102012}. By Lemma \ref{Lemma: integral matrix has integral kernel}, the corollary follows.
\end{proof}

\section{Proof of Theorem \ref{Theorem: main theorem}}\label{Section: Proof of the main theorem}

\begin{proof}[Proof of $(1)\implies (2)$]
Let $n=\dim A$. Assume that $(2)$ does not hold.

Let $m$ be the maximal integer $k$ such that there exist $w\in \mathrm{W}(G)$ and $\{i_1,\cdots,i_k\}\subset \{1,\cdots,r\}$ such that  $w^{-1}\boldsymbol{M} w\subset \bigcap_{j=1}^k\boldsymbol{P}_{i_j}\cap \bigcap_{j=1}^k \tau(\boldsymbol{P}_{i_j})$, and $\{w(\chi_{i_j}):j=1,\cdots,k\}$ are linearly independent as linear functionals on $D$. Note that $m$ can be attained. Since $(2)$ does not hold, $m\leq n$.

Recall that we have fixed an $\eta>0$ once and for all such that Proposition \ref{Proposition: there is m in M such that Ad(mg)v is in W_0}, Corollary \ref{Corollary: If g in U_u^M, then Ad(mg)u is contained in span...} and Proposition \ref{Proposition: when M is nontrivial, if the intersection is nonempty, the span is again unipotent} hold for this $\eta$. Suppose that $Hg\Gamma\cap X_{\eta}=\emptyset$, then $U_0^{A,g}=\emptyset$. By Lemma \ref{Lemma: a covering for torus}, $\{U_{\mathfrak{u}}^{A,g}:\mathfrak{u}\in \mathfrak{R}\}$ forms an open cover of $A$.

Take $\{\mathfrak{v}_1,...,\mathfrak{v}_k\} \subset \mathfrak{R}$.
By Proposition \ref{Proposition: a locally finite cover} (3) and the negation of (2) in Theorem \ref{Theorem: main theorem}, if $k\leq n$ and $\bigcap_{j=1}^k U_{\mathfrak{v}_j}^{A,g}$ is nonempty, then there exist $k$ linearly independent linear functionals $\lambda_1,\cdots,\lambda_k$ on $A$ and real numbers $c_1,\cdots,c_k$ such that
\begin{align*}
    \mathrm{conv}(\bigcap_{j=1}^k U_{\mathfrak{v}_j}^{A,g})\subset \{a\in A: \lambda_j(a)<c_j,\forall j=1,\cdots,k\}.
\end{align*}
By Lemma \ref{Lemma: invariance dimension of convex set determined by k linearly independent functionals} and Lemma \ref{Lemma: inequality of invariance dimension}, the above implies that
\begin{align*}
    \text{invdim {conv}}(\bigcap_{j=1}^k U_{\mathfrak{v}_j}^{A,g})\leq n-k.
\end{align*}
Also, by Proposition \ref{Proposition: a locally finite cover} (2), the cover $\{\mathrm{conv}(U_{\mathfrak{u}}^{A,g}):\mathfrak{u}\in \mathfrak{R}\}$ is locally finite. Therefore, the open cover $\{U_{\mathfrak{u}}^{A,g}:\mathfrak{u}\in \mathfrak{R}\}$ of $A$ meets the assumptions of Theorem \ref{Theorem: a covering theorem}. Applying Theorem \ref{Theorem: a covering theorem}, we obtain $n+1$ different $\mathfrak{v}_1,\cdots,\mathfrak{v}_{n+1}$ in $\mathfrak{R}$ such that 
\begin{align*}
    \bigcap_{j=1}^{n+1} U_{\mathfrak{v}_j}^{A,g}\neq \emptyset.
\end{align*}
Then by Proposition \ref{Proposition: consequence of M contained in parabolic subgroup} and Lemma \ref{Lemma: consequence of nonempty intersection}, 
we have $m\geq n+1$, which leads to a contradiction. Therefore, $U_0^{A,g}$ is nonempty, and hence the $H$-action is uniformly non-divergent.
\end{proof}

The following proof can be regarded as a generalization of the phenomenon in \cite[Example 1]{Tomanov_Weiss_2003_Closed_orbits_for_actions_of_maximal_tori_on_homogeneous_spaces_MR1997950} (see also \cite[Section 9]{Solan_Tamam_2022_On_topologically_big_divergent_trajectories}).

\begin{proof}[Proof of $(2)\implies (3)$]
Assume that $(2)$ holds. Then there exist $w\in \mathrm{W}(G)$, $w^{\prime}\in \mathrm{W}(\mathrm{Z}_G(M))$ and $\{i_1,\cdots,i_k\}\subset \{1,\cdots,r\}$ such that
\begin{itemize}
    \item $w^{-1}\boldsymbol{M} w\subset \bigcap_{j=1}^k \boldsymbol{P}_{i_j} $;
    
    \item $w^{-1}\boldsymbol{M} w\subset \bigcap_{j=1}^k \tau(\boldsymbol{P}_{i_j}) $;
    
    \item $w^{\prime}w(\chi_{i_1}),\cdots,w^{\prime}w(\chi_{i_k})$ are not linearly independent as linear functionals on $A^{\circ}$.
\end{itemize}

Denote by $(\cdot|\cdot)$ the Killing form on $\mathrm{Lie}(G)$, which is a strictly positive definite symmetric bilinear form on $\mathrm{Lie} (T)$ (see e.g. \cite{Helgason_1978_Differential_geometry_Lie_groups_and_symmetric_spaces_MR514561}).

Let $u_1,\cdots, u_k \in\mathrm{Lie}(T)$ be such that $w(\chi_{i_j})(v)=(u_j|v)$ for every $v\in\mathrm{Lie}(T)$ and every $j=1,...,k$. 
Let $U:=\mathrm{Span}_{\mathbb{R}}\{u_1,\cdots,u_k\}$ and $\pi_U$ (resp. $\pi_{U^{\perp}}$) be the orthogonal projection from $\mathrm{Lie}(T)$ to $U$ (resp. $U^{\perp}$) with respect to $(\cdot|\cdot)$. By assumption, $w(\chi_{i_1}),\cdots,w(\chi_{i_k})$ are not linearly independent on $\mathrm{Ad}(w^{\prime -1})\mathrm{Lie}(A)$. By Corollary \ref{Corollary: If linear functionals are linearly independent, then ...}, $U^{\prime}:=\pi_U(\mathrm{Ad}(w^{\prime -1})\mathrm{Lie}(A))$ is a proper linear subspace of $U$. We also denote $U^{\perp \prime }:=\pi_{U^{\perp}}(\mathrm{Ad}(w^{\prime -1})\mathrm{Lie}(A))$.

Applying Proposition \ref{Proposition: consequence of A being a proper subspace} to $U$, $U^{\prime}$, and $w(\chi_{i_j})$ in place of $V$, $U$, and $\lambda_j$  there, we obtain $v\in U$ such that for any $N>0$, any $u\in U^{\prime}$, there is  some $1\leq j(u) \leq k$ such that $|w(\chi_{i_{j(u)}})(u+Nv)|>N$. Therefore,
\begin{align}\label{align: two inequalities in the proof of (2) implying (1) in the main theorem}
    \text{either } w(\chi_{i_{j(u)}})(u+Nv)<-N, \text{ or } -w(\chi_{i_{j(u)}})(u+Nv)<-N.
\end{align}
Recall that for each $1\leq i \leq r$, $p_i\in \bigwedge^{d_i}\mathfrak{g}$ (resp. $p_i^{-}\in \bigwedge^{d_i}\mathfrak{g}$) 
is the representative of the Lie algebra of $\mathrm{Rad}_{\mathrm{U}}(\boldsymbol{P}_i)$ (resp. $\mathrm{Rad}_{\mathrm{U}}(\tau(\boldsymbol{P}_i))$). As $w^{-1} \boldsymbol{M} w\subset \bigcap_{j=1}^k\boldsymbol{P}_{i_j}$, for $j=1,\cdots,k$, we have 
\begin{align}\label{align: equality 1 in proof of (2) implying (1) of main theorem}
    \mathrm{Ad}(H w^{\prime} \exp(Nv)w) p_{i_j}&=\mathrm{Ad}(A M w^{\prime} \exp(Nv)w)p_{i_j}\nonumber\\
    &=\mathrm{Ad}(w^{\prime}w^{\prime -1} A w^{\prime} \exp(Nv)w)p_{i_j}\nonumber\\
    &=\mathrm{Ad}(w^{\prime}) \mathrm{Ad}(\exp(U^{\prime}+U^{\perp\prime}+N v))\mathrm{Ad}(w)p_{i_j}\nonumber\\
    &=\mathrm{Ad}(w^{\prime})\mathrm{Ad}(\exp(U^{\prime}+Nv))\mathrm{Ad}(w)p_{i_j}\nonumber\\
    &=\exp(w(\chi_{i_j})(U^{\prime}+N v))\mathrm{Ad}(w^{\prime}w)p_{i_j},
\end{align}
where  for the second equality 
we use $w^{-1}\boldsymbol{M} w\subset \bigcap_{j=1}^k \boldsymbol{P}_{i_j} $.
And the third and fourth equality follow from the fact that for any $u\in U^{ \perp \prime}$, $w(\chi_{i_j})(u)=0$. Similarly, as $w^{-1} \boldsymbol{M} w\subset \bigcap_{j=1}^k\tau(\boldsymbol{P}_{i_j})$, for any $1\leq j\leq k$,
\begin{align}\label{align: equality 2 in proof of (2) implying (1) of main theorem}
    \mathrm{Ad}(H w^{\prime} \exp(Nv)w)  p_{i_j}^-=\exp(-w(\chi_{i_j})(U^{\prime}+Nv))\mathrm{Ad}(w^{\prime}w)p_{i_j}^-,
\end{align}
where in the above equality, we use the fact that $\mathrm{Ad}(a)p^-_{i_j}=-\chi_{i_j}(a)p^-_{i_j}$ for any $a\in T$.
Then by (\ref{align: two inequalities in the proof of (2) implying (1) in the main theorem}), for any $\epsilon>0$, there exists $N>0$ such that for any $h\in H$, there exists $1\leq j\leq k$ such that 
\begin{align}\label{align: either p_i is small or p_i^- is small}
   \text{either } \norm{\mathrm{Ad}(hw^{\prime} \exp(Nv)w)  p_{i_j}}<\epsilon, \text{ or }\norm{\mathrm{Ad}(hw^{\prime} \exp(Nv)w)  p_{i_j}^-}<\epsilon.
\end{align}
Since $p_{i_j}$ and $p^-_{i_j}$ are both nonzero $\mathbb{Q}$-vectors for any $1\leq j \leq k$, by (\ref{align: either p_i is small or p_i^- is small}), we conclude that $(3)$ holds.
\end{proof}

\begin{proof}[Proof of $(3)\implies (1)$]
This follows from Proposition \ref{Proposition: Not uniformly non-divergent criterion}.
\end{proof}

\section{Nondivergence in real rank one quotient}\label{Section: nonarithmetic quotient}
Throughout this section, let $\boldsymbol{G}$ be a connected semisimple $\mathbb{R}$-algebraic group with real rank one, and $\Gamma$ be an arbitrary lattice of $G$. By the Margulis arithmeticity theorem (see e.g. \cite{Zimmer_1984_Ergodic_theory_and_semisimple_groups_MR776417}), it is possible that $\Gamma$ of $G$ is non-arithmetic. 

Let $\mathfrak{g}=\mathfrak{k}\oplus\mathfrak{p}$ be a Cartan decomposition, where $\mathfrak{k}$ (resp. $\mathfrak{p}$) is the eigenspace with eigenvalue $1$ (resp. $-1$) of the corresponding Cartan involution. By \cite[Theorem 4.6]{Garland_Raghunathan_1970_Fundamental_domains_for_lattices_MR267041}, there are only finitely many unit vectors $Y$ in $\mathfrak{p}$ such that the unstable horosphere $N_{Y}$ of $\exp(Y)$ satisfies that $N_Y/N_Y\cap \Gamma$ is compact. If we fix such a $Y_0$, then for any other such $Y$, there exists $b_Y\in K$ such that $Y=Ad(b_Y^{-1})Y_0$. Let $\Xi$ be the collection of such $b_Y\in K$. In particular, the neutral element $e\in \Xi$. Let $\mathfrak{a}_{Y_0}$ be the $\mathbb{R}$-span of $Y_0$, and $A$ be the analytic subgroup corresponding to $\mathfrak{a}_{Y_0}$. Then there is a unique character (simple root) $\alpha$ of $A$ such that 
\[\mathfrak{g}=\mathfrak{g}_{-2\alpha}\oplus\mathfrak{g}_{-\alpha}\oplus \mathfrak{z}(\mathfrak{a}_{Y_0})\oplus \mathfrak{g}_{\alpha}\oplus \mathfrak{g}_{2\alpha},\]
where 
\[\mathfrak{g}_{i\alpha}:=\{v\in \mathfrak{g}:Ad(a)v=\exp(i\alpha(a))v,\forall a\in A\}, \quad i=\pm 1, \pm 2,\]
and $\mathfrak{Z}(\mathfrak{a}_{Y_0})$ is the centralizer of $\mathfrak{a}_{Y_0}$ in $\mathfrak{g}$.
Note that as before, by abuse of notations, for $a\in A$, \[\alpha(a):=\alpha(v), \quad \text{where }a=\exp(v) \text{ for }v\in \mathfrak{a}_{Y_0}.\]

Consider the Iwasawa decomposition $\boldsymbol{G}=\boldsymbol{KAN}$. Then $\boldsymbol{Z_G(A)}=\boldsymbol{M}\boldsymbol{A}$, where $\boldsymbol{M}=\boldsymbol{Z_G(A)}\cap \boldsymbol{K}$. Using these notations, we note that the minimal $\mathbb{R}$-parabolic subgroup $\boldsymbol{P}=\boldsymbol{MAN}$. Denote $^{\circ}\!\boldsymbol{P}=\boldsymbol{MN}$.

For any $t\in \mathbb{R}$, let
\[A_t:=\{a\in A: \alpha(a)<t\}.\]
As $G/\Gamma$ is not compact, we have the following theorem about fundamental domain of $G/\Gamma$:

\begin{theorem}\label{Theorem: reduction theory in real rank 1}\cite[Theorem 0.6]{Garland_Raghunathan_1970_Fundamental_domains_for_lattices_MR267041}
There exists $t_0\in \mathbb{R}$ and an open relatively compact subset $\eta_0\subset N$ such that 
\begin{itemize}
    \item[1.] For all $b\in \Xi$, $b^{-1}N b/ b^{-1}N b\cap \Gamma$ is compact;
    
    \item[2.] For all $t>t_0$, and all open, relatively compact subset $\eta$ of $N$ such that $\eta_0\subset \eta$,
   \[G=\bigcup_{b\in \Xi} KA_{t}\eta b \Gamma;\]
   
   \item[3.] Given $t\geq t_0$, $\eta \supset \eta_0$, we can find $t'\in\mathbb{R}$ so that $t'<0$, and for all $\gamma\in \Gamma$, $b,b'\in \Xi$, such that $KA_{t'}\eta b\gamma\cap KA_{t}\eta b'\neq \emptyset$, we must have $b=b'$ and $b\gamma b^{-1}\in {^{\circ}\!P}$.
\end{itemize}

\end{theorem}

By \cite[Theorem 2.1]{Raghunathan_1972_Discrete_subgroups_of_Lie_groups_MR0507234} and \cite[Lemma 3.1]{Dani_Margulis_1993_Limit_distributions_of_orbits_of_unipotent_flows_and_values_of_quadratic_forms_MR1237827}, $\rho(\Gamma)p_N$ is discrete. Here $\rho=Ad\wedge\cdots\wedge Ad$ is the wedge product of adjoint representation of $G$ on $\bigwedge^d \mathfrak{g}$ with $d=\dim N$, and $p_N$ is the representative of $\exp^{-1}(N\cap \Gamma)$ in $\bigwedge^d \mathfrak{g}$. As before $\norm{\cdot}$ is a $\rho(K)$-invariant norm on $\bigwedge^d \mathfrak{g}$, and $\pi:G\to G/\Gamma$ is the natural projection. We have the following compactness criterion for subsets of non-arithmetic quotient $G/\Gamma$.

\begin{lemma}\label{Lemma: compactness criterion in real rank 1}
Let $L\subset G$ be a subset. Then $\pi(L)$ is precompact in $G/\Gamma$ if and only if there exist $\epsilon>0$ such that for all $b\in \Xi$, $g\in L$, one has
\[\inf_{\gamma\in \Gamma}\norm{\rho(g\gamma b^{-1})p_N}>\epsilon.\]
\end{lemma}
\begin{proof}
Assume that $\pi(L)$ is not precompact, then there exists a sequence $\{g_n\}_{n\in \mathbb{N}}\subset L$ such that $\pi(g_n)\to \infty$ as $n\to \infty$. By Theorem \ref{Theorem: reduction theory in real rank 1}, there exist $t\in \mathbb{R}$ and a compact subset $C\subset G$ such that $G=CA_t\Xi\Gamma$. Therefore, we can write $\pi(g_n)=\pi(c_n a_n b_n)$, where $c_n\in C,a_n\in A_t, b_n\in \Xi$, and $\alpha(a_n)\to -\infty$. As $\Xi$ is a finite set, by passing to a subsequence, we may assume that $b_n=b$ for some $b\in \Xi$ and all $n\in \mathbb{N}$. So $g_n=c_n a_n b \gamma_n$. Since $\rho(b\Gamma b^{-1})p_N$ is discrete, so is $\rho(\Gamma b^{-1})p_N$, we have 
\[\inf_{\gamma\in \Gamma}\norm{\rho(\gamma b^{-1})p_N}>0.\]
Note that 
\[\norm{\rho(g_n\gamma_n^{-1}b^{-1})p_N}=\norm{\rho(c_n a_n)p_N}\xrightarrow{n\to \infty}0.\]
Therefore $\inf_{\gamma\in\Gamma}\norm{\rho(g_n\gamma b^{-1})p_N}\to 0$ as $n\to \infty$.

Conversely, if $\pi(L)$ is precompact, there exists $\delta\in \mathbb{R}$ such that for all $g\in L$, if we write $g=c_g a_g b_g \gamma_g$, then $\alpha(a_g)>\delta$. As for all $b\in \Xi$, $\rho(\Gamma b^{-1})p_N$ is discrete, there exist $\epsilon>0$ such that 
\[\inf_{\gamma\in \Gamma}\norm{\rho(g\gamma b^{-1})p_N}>\epsilon,\quad \forall g\in L, b\in \Xi.\]
\end{proof}

We shall need the following lemma for "separation of cusps" using the representation of $G$ on $\bigwedge^d \mathfrak{g}$ (cf. \cite[Lemma 3.2]{Lindenstrauss_Mohammadi_2022_Polynomial_effective_density}). This lemma is analogous to Proposition \ref{Proposition: existence of Zassenhauss neighborhood}.

\begin{lemma}\label{Lemma: separation of cusps}
Let $t_0\in \mathbb{R}$ and $\eta_0\subset N$ be given as in Theorem \ref{Theorem: reduction theory in real rank 1}. Then for any $t\geq t_0$ and $\eta \supset \eta_0$, there exists $t'\in \mathbb{R}$ such that the following holds: For any $b\in \Xi$, $\gamma\in \Gamma$ and $g\in KA_t\eta b\gamma^{-1}$, if $b'\in \Xi$ and $\gamma'\in \Gamma$ are such that
\[\norm{\rho(g\gamma' b'^{-1})p_N}\leq e^{t'},\]
then $b'=b$ and $\gamma'\in \gamma b^{-1}{^{\circ}\!P}b.$
\end{lemma}
\begin{proof}
Let $t'$ be given as in Theorem \ref{Theorem: reduction theory in real rank 1}. Using $G=KANb'$, we may write $g\gamma'=k_1 a_1 n_1 b'$. Then 
\[\norm{\rho(g\gamma' b'^{-1})p_N}=\norm{\rho(k_1 a_1)p_N}=\exp(\alpha(a_1))\leq e^{t'}.\]
Note that as $N/N\cap b'\Gamma b'^{-1}$ is compact, we can find $\gamma_1\in \Gamma$ such that $b'\gamma_1 b'^{-1}\in N$ and $n_1 b' \gamma_1 b'^{-1}\in \eta$. Therefore, $g \in KA_t \eta b' \gamma_1^{-1}\gamma'^{-1}$.

Thus by assumption, we have
\[KA_t\eta b \gamma^{-1} \cap KA_t \eta b' \gamma_1^{-1}\gamma'^{-1}\neq \emptyset.\]
Then Theorem \ref{Theorem: reduction theory in real rank 1} (3) yields $b=b'$, and $\gamma'\in \gamma b^{-1} {^{\circ}\!P}b.$ 
\end{proof}
We define an equivalence relation in the product $\Xi\times\Gamma$ as follows: $(b,\gamma)\sim(b',\gamma')$ if and only if $b=b'$ and $\gamma'\in \gamma b^{-1}{^{\circ}\!P}b$. One can directly verify that this is indeed an equivalence relation. The following is an immediate consequence of Lemma \ref{Lemma: separation of cusps}.
\begin{corollary}\label{Corollary: separation of cusps}
Let $t_0\in \mathbb{R}$ and $\eta_0\subset N$ be given as in Theorem \ref{Theorem: reduction theory in real rank 1}. Then there exists $t'\in \mathbb{R}$ such that the following holds: For any $g\in G$, $(b,\gamma)\in \Xi\times \Gamma$, if
\[\norm{\rho(g\gamma b^{-1})p_N}\leq e^{t'},\]
then for any $(b',\gamma') \in \Xi\times \Gamma$ such that $(b',\gamma')\nsim (b,\gamma)$,
\[\norm{\rho(g\gamma' b'^{-1})p_N}> e^{t'}.\]
\end{corollary}

Now let $\omega:G\to G/P$ be the natural projection map. 
\begin{lemma}\label{Lemma: pushout in real rank 1}
There exists $C>1$ such that for any $g\in G$, there exists $a\in A$ such that
\[\norm{\rho(ag)p_N}>C\norm{\rho(g)p_N}\]
\end{lemma}
\begin{proof}
Using Bruhat decomposition, we can write $g=uwza_0v$, where $u,v\in N$, $w\in W(G),a_0\in A,z\in M$. Note that for any $a\in A$, we have $\rho(a)p_N=\exp(\chi_0(a)) p_N$, where $\chi_0=m\alpha$ for some $m\in \mathbb{N}$. Therefore, 
\[\rho(g)p_N=\rho(uwa_0)p_N\in V_{ w(\chi_0)}\oplus \bigoplus_{\chi> w(\chi_0)} V_{\chi}.\]
Since $\boldsymbol{A}$ is a maximal $\mathbb{R}$-split torus, we can choose $a\in A$ such that $w(\chi_0)\geq \chi$ for $\chi>w(\chi_0)$, and $w(\chi_0)(a)>0$. Therefore, there exists $a\in A$ such that  
\[\norm{\rho(ag)p_N}>C\norm{\rho(g)p_N}.\]
\end{proof}

\begin{proposition}\label{Proposition: maximal R split torus is UN in real rank 1}
Let $\boldsymbol{A}$ be a maximal $\mathbb{R}$-split torus of $\boldsymbol{G}$, then the action of $A$ on $G/\Gamma$ is uniformly non-divergent.
\end{proposition}
\begin{proof}
By compactness of $G/P$ and a standard continuity argument, we can find a finite set $\{a_1,\cdots, a_n\}\subset A$ (indeed $n=2$) and $C>1$ such that for any $g\in G$, there exists $a_i$ for some $1\leq i\leq n$ such that
\[\norm{\rho(a_i g)p_N}>C\norm{\rho(g)p_N}.\]
Let $c=\max_{1\leq i\leq n}\{\norm{\rho(a_i)},\norm{\rho(a_i^{-1})}\}\geq 1$, where $\norm{\rho(a_i)}$ denotes the operator norm of $\rho(a_i)$. Let $t'$ be given as in Corollary \ref{Corollary: separation of cusps}. Assume that $g\in G$ is such that there exists $(b,\gamma)\in \Xi\times \Gamma$ with 
\[\norm{\rho(g\gamma b^{-1})}< c^{-1}e^{t'},\]
then by Corollary \ref{Corollary: separation of cusps}, for any $(b',\gamma')\nsim (b,\gamma)$,
\[\norm{\rho(g\gamma' b'^{-1})p_N}>e^{t'}.\]
By Lemma \ref{Lemma: pushout in real rank 1}, there exists $a_i$ such that \[\norm{\rho(a_ig\gamma b^{-1})p_N}>C \norm{\rho(g\gamma b^{-1})p_N},\]
while by the choice of $c$, for any other $(b',\gamma')\nsim (b,\gamma)$,
\[\norm{\rho(a_i g\gamma' b'^{-1})p_N}>c^{-1}e^{t'}.\]
Therefore, by Lemma \ref{Lemma: compactness criterion in real rank 1}, the proposition follows.
\end{proof}

\begin{proof}[Proof of Theorem \ref{Theorem: nonarithmetic case}]
If $\boldsymbol{H}$ contains a maximal $\mathbb{R}$-split torus, then by Proposition \ref{Proposition: maximal R split torus is UN in real rank 1}, the action of $H$ on $G/\Gamma$ is uniformly non-divergent.

Conversely, assume that $\boldsymbol{H}$ does not contain a maximal $\mathbb{R}$-split torus. We may write $\boldsymbol{H}=\boldsymbol{SV}$, where $\boldsymbol{S}$ is reductive and $\boldsymbol{V}$ is the unipotent radical of $\boldsymbol{H}$. Since $\boldsymbol{G}$ is of rank 1, $S$ is compact. Also, by conjugating a suitable element in $G$, we may assume that $\boldsymbol{V}\subset \boldsymbol{N}$. Let $a_t\in A$ be a sequence such that $\alpha(a_t)\to -\infty$ as $t\to \infty$, we have \[\sup_{h\in H}\norm{\rho(ha_t)p_N}\to 0, \quad \text{ as }t\to \infty.\]
Therefore, by Lemma \ref{Lemma: compactness criterion in real rank 1}, the action of $H$ on $G/\Gamma $ is not uniformly non-divergent.
\end{proof}

\begin{ack}
We thank Professor Yitwah Cheung and Professor Lingming Liao for their interest in this project, and their encouragement. We also thank the anonymous referee for helpful comments on an earlier version of this paper. H.Z. is supported by the startup grant of Soochow University and the National Natural Science Foundation of China (No. 12501250). R.Z. is supported by National Natural Science Foundation of China (No. 12201013).
\end{ack}

\bibliographystyle{plain}
\bibliography{references.bib}

\end{document}